\newcommand\tenq[2][1]{%
\def\useanchorwidth{T}%
\ifnum#1>1%
\stackunder[0pt]{\tenq[\numexpr#1-1\relax]{#2}}{\!\scriptscriptstyle\thicksim}%
\else%
\stackunder[1pt]{#2}{\!\scriptstyle\thicksim}%
\fi%
}
\DeclareRobustCommand\widecheck[1]{{\mathpalette\@widecheck{#1}}}
\def\@widecheck#1#2{%
    \setbox\z@\hbox{\m@th$#1#2$}%
    \setbox\tw@\hbox{\m@th$#1%
       \widehat{%
          \vrule\@width\z@\@height\ht\z@
          \vrule\@height\z@\@width\wd\z@}$}%
    \dp\tw@-\ht\z@
    \@tempdima\ht\z@ \advance\@tempdima2\ht\tw@ \divide\@tempdima\thr@@
    \setbox\tw@\hbox{%
       \raise\@tempdima\hbox{\scalebox{1}[-1]{\lower\@tempdima\box
\tw@}}}%
    {\ooalign{\box\tw@ \cr \box\z@}}}
\def\given{\,|\,}
\def\Biggiven{\,\Big{|}\,}
\def\tr{\mathop{\text{tr}}\kern.2ex}
\def\P{{\mathrm P}}
\def\E{{\mathrm E}}
\def\d{{\mathrm d}}
\def\cI{{\mathcal I}}
\def\supp{\operatorname{supp}}
\newcommand{\zahl}[1]{\llbracket #1\rrbracket}
\newcommand\yestag{\addtocounter{equation}{1}\tag{\theequation}}
\newcolumntype{L}[1]{>{\raggedright\let\newline\\\arraybackslash\hspace{0pt}}m{#1}}
\newcolumntype{C}[1]{>{  \centering\let\newline\\\arraybackslash\hspace{0pt}}m{#1}}
\newcolumntype{R}[1]{>{ \raggedleft\let\newline\\\arraybackslash\hspace{0pt}}m{#1}}
\newcolumntype{d}[1]{D{.}{.}{#1}}
\newcolumntype{H}{>{\setbox0=\hbox\bgroup}c<{\egroup}@{}}
\newcolumntype{Z}{>{\setbox0=\hbox\bgroup}c<{\egroup}@{\hspace*{-\tabcolsep}}}
\newcolumntype{b}{X}
\newcolumntype{s}{>{\hsize=.5\hsize}X}
\numberwithin{equation}{section}
\newtheorem{theorem}{Theorem}[section]
\newtheorem{lemma}{Lemma}[section]
\newtheorem{assumption}{Assumption}[section]
\providecommand{\customgenericname}{}
\newcommand{\newcustomtheorem}[2]{%
  \newenvironment{#1}[1]
  {%
   \renewcommand\customgenericname{#2}%
   \renewcommand\theinnercustomgeneric{##1}%
   \innercustomgeneric
  }
  {\endinnercustomgeneric}
}
\theoremstyle{definition}
\newtheorem{remark}{Remark}[section]
\newcommand{\mylabel}[2]{#2\def\@currentlabel{#2}\label{#1}}
\begin{document}

\setlength{\abovedisplayskip}{5pt}
\setlength{\belowdisplayskip}{5pt}
\setlength{\abovedisplayshortskip}{5pt}
\setlength{\belowdisplayshortskip}{5pt}
\hypersetup{colorlinks,breaklinks,urlcolor=blue,linkcolor=blue}

\title{\LARGE On the adaptation of causal forests to manifold data}

\author{Yiyi Huo\thanks{Department of Biostatistics, University of Washington, Seattle, WA 98195, USA; e-mail: {\tt yiyih@uw.edu}}, ~~~Yingying Fan\thanks{Data Sciences and Operations Department, Marshall School of Business, University of Southern California, Los Angeles, CA 90089, USA; e-mail: {\tt fanyingy@usc.edu}. Fan's research was partially supported by NSF Grant DMS--2310981},~~~and~
Fang Han\thanks{Department of Statistics, University of Washington, Seattle, WA 98195, USA; e-mail: {\tt fanghan@uw.edu}. Han's research was partially supported by NSF Grants SES-2019363 and DMS-2210019.}
}

\date{\today}

\maketitle

\vspace{-1em}

\begin{abstract}
Researchers often hold the belief that random forests are ``the cure to the world's ills'' \citep{bickel2010leo}. But how exactly do they achieve this? 
Focused on the recently introduced causal forests \citep{athey2016recursive,wager2018estimation}, this manuscript aims to contribute to an ongoing research trend towards answering this question, proving that causal forests can adapt to the unknown covariate manifold structure. In particular, our analysis shows, for the first time, that a causal forest estimator can achieve the optimal rate of convergence for estimating the conditional average treatment effect, with the covariate dimension automatically replaced by the manifold dimension. These findings align with analogous observations in the realm of deep learning and resonate with the insights presented in Peter Bickel's 2004 Rietz lecture. 
\end{abstract}

{\bf Keywords}: random forests, causal forests, manifold data, adaptation.

\maketitle

\section{Introduction}\label{sec:intro}

Suppose that we observe a binary treatment data. It contains $n$ observations of $(X, D, Y(D))$ from an independent and identically distributed sample $\{(X_i, D_i,Y_i(0), Y_i(1))\}_{i=1}^n$ of $\{X, D, Y(0), Y(1)\}$, with 
$$
X_i\in\mathcal{X}\subset\mathbb{R}^d, ~~D_i\in\{0,1\},~~ {\rm and}~~ (Y_i(0), Y_i(1))\in\mathbb{R}^2 
$$
as the potential outcomes \citep{neyman1923applications,rubin1974estimating}. The goal of interest is to, based on such a data, infer the conditional average treatment effect (CATE, \cite{athey2016recursive,athey2019machine}):
\begin{align}\label{eq:cate}
\tau(x) := \E\Big[Y_i(1)-Y_i(0) \Biggiven X=x\Big],~~~\text{ for some/all }x\in\cX.
\end{align}

Causal inference towards estimating $\tau(x)$ often invokes some nonparametric/semiparametric regression procedures \citep{van2006targeted,wager2018estimation,kunzel2019metalearners,kennedy2020towards,knaus2021machine,nie2021quasi}. To justify these methods, besides the commonly imposed causality assumptions of unconfoundedness and overlap \citep{imbens2015causal}, it is often to assume that: 
\begin{itemize}
\item[\bf (D)] {\bf distribution assumption}: the Lebesgue density and conditional density of $X$ given $D$ are (locally) ``sufficiently regular'';
\item[\bf (R)] {\bf regression assumption}: the regression functions $\mu_\omega(x):=\E[Y~|~X=x,D=\omega]$, for $\omega=0$ and $1$, are (locally) ``sufficiently smooth''.   
\end{itemize}

This paper aims to challenge the distribution assumption {\bf (D)} and proposes to replace it with the following covariate manifold assumption:
\begin{itemize}
\item[\bf (M)] {\bf manifold assumption}: the distribution of $X$ lives on a low-dimensional manifold and its Hausdorff (conditional) density is (locally) ``sufficiently regular''. 
\end{itemize}
There are three main motivations to consider the assumption {\bf (M)} in contrast to {\bf (D)}. Foremost and of paramount importance, in practical terms, instances of data exhibiting intrinsic low-dimensional structures seem to be ubiquitous. If we embrace this perspective, it becomes judicious to postulate a manifold assumption when scrutinizing statistical methodologies related to such data.

Second, from a methodological point of view, although the mathematical statistics literature has long been aware of the phenomenon that ``local methods'' can adapt to the covariate manifold structure \citep{bickel2007local,kpotufe2010curse},
the causal inference literature—except for a notable work of \cite{khosravi2019non}—is largely silent about such phenomena. Discussions that address the mitigation of the curse of dimensionality through local, in contrast to global, causal methods could prove beneficial in this regard.

Lastly, from a theoretical standpoint, since Peter Bickel's seminal 2004 Rietz lecture, there has been a surge of theoretical interest in understanding adaption to manifold structures. Noteworthy contributions in this realm include  \cite{bickel2007local} and \cite{cheng2013local} on local linear regression, \cite{kpotufe2011k} on nearest neighbor regression, \cite{kpotufe2013adaptivity} on kernel regression, \cite{yang2015minimax} on Gaussian process regression, \cite{kpotufe2012tree} on tree methods, \cite{liao2021multiscale} on multiscale regression, and \cite{schmidt2019deep}, \cite{chen2022nonparametric}, and \cite{jiao2023deep} on deep learning methods. 

Interestingly, amidst these advancements, discussions regarding the adaptation of random forests to manifold structures—albeit alluded to in various contexts (cf. \cite{biau2016random} and references therein)—still remain notably absent. Our work thus also complements the studies of \cite{biau2008consistency}, \cite{biau2012analysis}, \cite{louppe2013understanding}, \cite{scornet2015consistency}, \cite{zhu2015reinforcement}, \cite{oprescu2019orthogonal}, \cite{syrgkanis2020estimation}, \cite{mourtada2020minimax}, \cite{klusowski2021sharp},  \cite{cattaneo2022pointwise}, \cite{chi2022asymptotic}, among many other, which are focused on adaptation of random forests methods to different types of regression models.

Specifically, we provide a statistical analysis of the causal forest method of \cite{lin2022regression}—a slight modification to the causal forests introduced in \cite{athey2016recursive} and \cite{wager2018estimation}—based on the assumptions {\bf (R)} and {\bf (M)}. We explore pointwise rates of convergence and central limit theorems for the causal forest estimator $\hat\tau(x)$ of $\tau(x)$, showing that
\begin{itemize}
\item[(1)] the pointwise mean squared error (MSE) could achieve the minimax risk $n^{-2/(m+2)}$, with $m$ representing the manifold dimension in contrast to the ambient space dimension;
\item[(2)] a rescaled version of $\hat\tau(x)$ could admit a central limit theorem towards inferring $\tau(x)$, with the asymptotic variance consistently estimable;
\item[(3)] all the theoretical outcomes are established within a broader framework that encompasses a variety of nonparametric methods beyond the scope of the causal forest approach.
\end{itemize}

The focus of this paper is on the theoretical side—about the CATE estimation based on causal forests under a manifold setting—rather than developing new algorithms expressly crafted for manifold data. Our intent is to expand the already extensively explored theory in mathematical statistics to encompass random forests-based methods, and to connect them to causal inference. Through a particular version of the CATE estimator (i.e. the causal forest), our analysis suggests how precise a theoretical formulation can be made and  used for justifying other local methods. 

\vspace{0.3cm}
\noindent{\bf Paper organization.} The rest of this manuscript is organized as follows. Section \ref{sec:setup} introduces the setup of the problem and the studied causal forest estimator. Section \ref{sec:theory} presents the main theory of this manuscript, highlighting the adaption of causal forests to manifold structure based on the criteria of consistency, rate of convergence, and central limit theorem. Section \ref{sec:framework} lifts everything in Section \ref{sec:theory} to a general framework, covering potentially many other local methods. All proofs are relegated to the appendix.

\section{Setup}\label{sec:setup}

We consider the setting where there are $n$ studied units, indexed by $i\in[n]:=\{1,\ldots,n\}$. Each unit receives a binary treatment, indicated by $D_i\in\{0,1\}$, with $D_i=1$ or $D_i=0$ demonstrating that the $i$-th unit is in the treatment or control group, respectively. Following the Neyman-Rubin causal model, each unit then has a pair of potential outcomes, $(Y_i(0), Y_i(1))$. The realized outcome for the $i$-th unit is denoted by
$$
    Y_i = Y_i(D_i)= \begin{cases}
        Y_i(0), & \mbox{ if } D_i=0,\\
        Y_i(1), & \mbox{ if } D_i=1.
    \end{cases}
$$
Let $X_i\in\cX\subset\mathbb{R}^d$ represent a vector of pretreatment variables, not affected by the treatment. The observed data then is $\{(X_i,D_i,Y_i)\}_{i=1}^n$, which we believe to be exchangeable and, especially, contain no interference. 

In order to estimate the CATE $\tau(x)$ in \eqref{eq:cate}, we are interested in a causal forest estimator first proposed in \citet[Section 5]{lin2022regression}. 
To begin with, consider a tree estimator as \cite{athey2016recursive}. Let two {\it generic} trees, denoted by $T^1$ and $T^0$, be constructed for the treated and control groups, $\{(X_i,Y_i)\}_{i=1,D_i=1}^n$ and $\{(X_i,Y_i)\}_{i=1,D_i=0}^n$, respectively. Let $L^1$ and $L^0$ represent the sets of leaves generated by the two trees; they each represent a partition of the covariate space $\cX$. 
 The corresponding tree regression estimator of the {\it unobserved} potential outcome at the unit-$i$ is then defined as follows: it is the sample average of outcomes $Y_j(D_j)$ of individuals whose covariates $X_j$ are in the same leaf as $X_i$ but receiving the opposite treatment (i.e., $D_j=1-D_i$).

Next, we grow a forest from such trees through {\it subsampling} \citep{breiman2001random}. Let's first fix two parameters. Set $s$ to represent the subsample size and set $B$ to represent the number of replications. In each round of subsampling, indexed by $b\in[B]$, let $\cI_b^\omega$ ($\omega=0,1$) stand for a size-$s$ set being randomly sampled without replacement from $\{i:D_i=\omega\}$. For each round $b\in[B]$, one could then grow two causal trees, $T_b^1$ and $T_b^0$, from $\{(X_i,Y_i)\}_{i\in\cI_b^1}$ and $\{(X_i,Y_i)\}_{i\in\cI_b^0}$, respectively, leading to $2B$ many trees. 

The {\it forest} then aggregates all $2B$ trees by averaging. For each $i\in[n]$, the final forest estimator of $(Y_i(0),Y_i(1))$, combined with a regression adjustment step due to \cite{rubin1973use} and \cite{abadie2011bias}, is defined to be
\begin{align*}
    \hat{Y}_i^{\rm RF}(0) := \begin{cases}
        Y_i, & \mbox{ if } D_i=0,\\
       \displaystyle \frac1B \sum_{b=1}^B \Big[\Big(\Big\lvert \Big\{j \in \cI^0_b:X_j \in L^0_b(X_i)\Big\} \Big\rvert\Big)^{-1}\!\!\!\!\!\sum_{j \in \cI^0_b:X_j \in L^0_b(X_i)} \!\!\!(Y_j+\hat\mu_0(X_i)-\hat\mu_0(X_j)\Big], & \mbox{ if } D_i=1,    
    \end{cases}
\end{align*}
and
\begin{align*}
    \hat{Y}_i^{\rm RF}(1) := \begin{cases}
   \displaystyle     \frac1B \sum_{b=1}^B \Big[\Big(\Big\lvert \Big\{j \in \cI^1_b:X_j \in L^1_b(X_i)\Big\} \Big\rvert\Big)^{-1}\!\!\!\!\!\sum_{j \in \cI^1_b:X_j \in L^1_b(X_i)} \!\!\!(Y_j+\hat\mu_1(X_i)-\hat\mu_1(X_j)\Big], & \mbox{ if } D_i=0,\\
        Y_i, & \mbox{ if } D_i=1,
    \end{cases}
\end{align*}
where for any set $A$, $|A|$ represents its cardinality; for any $\omega\in\{0,1\}$, $b\in[B]$, and $x\in\cX$, $L_b^\omega(x)$ stands for the leaf of $T_b^\omega$ that contains $x$; and $\hat\mu_\omega(\cdot)$ is a {\it regression adjustment function} allowed to be set 0 in estimating the CATE.

Lastly, the estimator for $\tau(x)$ is articulated as a ``kernel smoothed average'' of $(\hat Y_i^{\rm RF}(1)-\hat Y^{\rm RF}_i(0))$ values corresponding to covariates proximate to $x$. Delving into the specifics, let $K_h(\cdot):\mathbb{R}^d\to\mathbb{R}$ represent a {\it multivariate kernel function} incorporating a user-defined tuning parameter $h\in \mathbb R^{> 0}$. The ultimate causal forest estimator is
\begin{align}\label{eq:causalf}
\hat\tau_{\rm RF}(x):=\frac{\sum_{i=1}^nK_h(X_i-x)(\hat Y_i^{\rm RF}(1)-\hat Y_i^{\rm RF}(0))}{\sum_{i=1}^nK_h(X_i-x)}.
\end{align}

\begin{remark}
A notable distinction between the investigated estimator in \eqref{eq:causalf} and the original causal tree and forest estimators introduced in \cite{athey2016recursive} and \cite{wager2018estimation} lies in the construction process. The former entails the creation of two distinct trees, in line with the methodology proposed in \cite{lin2022regression}. In contrast, the latter approach involves the development of a single tree that mixes both treatment and control data. While we conjecture that this divergence may not be pivotal, the separation approach significantly streamlines subsequent analysis and facilitates intricate computations to be developed in Section \ref{sec:theory}. Consequently, we adhere to this separation methodology throughout this manuscript, while acknowledging that, in practical applications, adopting a unified tree-building strategy as presented in \cite{athey2016recursive} and \cite{wager2018estimation} could potentially suffice.
\end{remark}

\begin{remark}
The estimator in \eqref{eq:causalf} further diverges from the causal forest estimator introduced in \cite{wager2018estimation} due to the incorporation of kernel smoothing. In its initial form, aside from the distinction of constructing trees using combined data versus two distinct groups, the causal forest employs a straightforward difference, $\hat Y_i^{\rm RF}(1)-\hat Y_i^{\rm RF}(0)$, to approximate $\tau(X_i)$. Our approach, on the other hand, is underpinned by \cite{lin2022regression}, which posits that \eqref{eq:causalf} can also yield a natural estimator for the average treatment effect (ATE). As a theoretical exposition, we refrain from engaging in a direct comparison between these two variants, instead concentrating our efforts on the adaptation of such methodologies to manifold structures.
\end{remark}

\begin{remark}
In their seminal paper, \cite{athey2019generalized} introduced the concept of ``local-centering'', a notion advocating the utilization of residuals from outcome and treatment variables—rather than original values—as data inputs when implementing a causal forest algorithm. This approach aligns with the ``residual-on-residual'' strategy \citep{robinson1988root} and the paradigm of double machine learning \citep{chernozhukov2018double}, while also exhibiting a fundamental connection to the concept of ``double robustness'' \citep{robins1994estimation, bang2005doubly}. In a series of recent studies, \cite{lin2021estimation} and \cite{lin2022regression} demonstrated that, in the context of ATE estimation, the advantage of double robustness can also be harnessed by incorporating a regression adjustment approach devised in \cite{rubin1973use} and \cite{abadie2011bias}, with $\hat\mu_w(\cdot)$ designed to approximate the conditional expectation $\mu_w(x):=\E[Y|X=x,D=w]$. In Section \ref{sec:framework}, we will delve deeply into an enhanced iteration of \eqref{eq:causalf}, which discusses this supplementary regression adjustment step in more details.
\end{remark}

\section{Theory}\label{sec:theory}

To present the theory, we first introduce the assumptions that regulate the data generating process and permit causal identification of $\tau(x)$ for {\it any} $x\in\cX$. 

\begin{assumption}\label{asp:causal} \phantomsection 
    \begin{enumerate}[itemsep=-.5ex,label=(\roman*)]
        \item\label{asp:causal-1} $[(X_i,D_i,Y_i)]_{i=1}^n$ are independent and identically distributed (i.i.d.) following the joint distribution of $(X,D,Y(D))$.
          
      \item\label{asp:causal-2} For almost all $x \in \cX$, $D$ is independent of $(Y(0),Y(1))$ conditional on $X=x$, and there exists a fixed constant $\eta > 0$ such that $\eta < \P(D=1 \given X=x) < 1-\eta$.
      
      \item\label{asp:causal-3} Letting $\mu_\omega(x):=\E(Y(\omega)|X=x, D=\omega)$ and $U_\omega(x):=Y(\omega)-\mu_\omega(x)$, it is assumed that $U_\omega(\cdot)$ is uniformly bounded and $\mu_\omega(\cdot)$ is bounded continuous for all $x \in \cX$ and $\omega \in \{0,1\}$. 
    \end{enumerate}
\end{assumption}


\begin{remark}
Assumption \ref{asp:causal}\ref{asp:causal-1} is the standard i.i.d.-ness condition commonly adopted in observational studies. Assumption \ref{asp:causal}\ref{asp:causal-2} is the unconfoundedness and overlap conditions for identifying the causal effect. Assumption \ref{asp:causal}\ref{asp:causal-3} regulates the regression function and its residual, constituting Assumption {\bf (R)} presented in Section \ref{sec:intro}.
\end{remark} 

The next set of assumptions regulates the distribution of $X$, constituting Assumption {\bf (M)} presented in Section \ref{sec:intro}.

\begin{assumption}  \phantomsection \label{asp:manifold} 
    \begin{enumerate}[itemsep=-.5ex,label=(\roman*)]
      \item\label{asp:manifold-dim} It is assumed that $X \in \cM$, where $\cM$ is an $m$-dimensional $C^{\infty}$ manifold in $\mathbb{R}^d$ with the manifold dimension $m \leq d$.

      \item\label{asp:manifold-law}The marginal law of $X$, denoted by $\zeta$, is assumed to be absolutely continuous with respect to the restricted m-dimensional Hausdorff measure in $\mathbb{R}^d$ on $\cM$ \citep[Definition 2.1]{gariepy2015measure}, denoted by $\mathcal{H}^m$. 

      \item\label{asp:manifold-compact}The support of $\zeta$, denoted by $\supp(\zeta)$, is assumed to be compact.
      
      \item \label{asp:manifold-f} Write $f:=\d \zeta / \d \cH^m$ to be the Radon-Nikodym derivative of $\zeta$ with respect to $\cH^m$.
      We assume that $f(\cdot)$ is bounded and bounded away from zero for any $x\in\supp(\zeta)$.

        
        \item\label{asp:conditional} Let the law of $X|D=\omega$ for $\omega\in\{0,1\}$ be denoted by $\zeta_\omega$, and the corresponding restriction to a set $U$ be denoted by $\zeta_{\omega, U}$. For any point $x \in \supp(\zeta)$ and any chart $(U, \psi)$ such that $U=U_x$ is a coordinate neighborhood of $x$ and $\psi: U \rightarrow$ $V \subset \mathbb{R}^m$, let $g_{\omega,x}=g_{\omega,U_x}:=\mathrm{d}\left(\psi_* \zeta_{\omega,U}\right) / \mathrm{d} \lambda$ be the Radon-Nikodym derivative of the restricted pushforward measure $\psi_* \zeta_{\omega,U}$ with respect to the Lebesgue measure $\lambda$. It is assumed that $g_{\omega,x}(\cdot)$ is locally Lipschitz.
    \end{enumerate}
    \end{assumption}
    
    \begin{remark}
    Assumption \ref{asp:manifold}\ref{asp:manifold-dim} regulates the random variable $X$ to be resided on an $m$-dimensional smooth manifold. Assumptions \ref{asp:manifold}\ref{asp:manifold-law} and \ref{asp:manifold}\ref{asp:manifold-compact} require the Hausdorff density to exist and being compactly supported. Assumptions  \ref{asp:manifold}\ref{asp:manifold-f} and \ref{asp:manifold}\ref{asp:conditional} further require the density to be ``sufficiently regular'', namely, bounded and bounded away from 0 and (locally) Lipschitz smooth. 
    \end{remark}
    
    \begin{remark}
Two additional remarks are worth noting. First, when $\cM$ is set to be the $d$-dimensional space without any more structure, Assumption \ref{asp:manifold} simplifies to the commonly employed density assumptions as seen in, e.g., \citet[Theorems 3.1 and 4.1]{wager2018estimation}. Second, although in Assumption \ref{asp:manifold} and at various other places ahead we formulate ``global'' assumptions that need to apply to {\it any} $x\in\cM$, it is straightforward to adapt these assumptions to a {\it local} context by considering those within a small neighborhood of a particular $x$. This is by confining our investigation to a localized subset of the manifold.
    \end{remark}

The next set of assumptions regulates the kernel smoothing function used in \eqref{eq:causalf}.

\begin{assumption} \phantomsection \label{asp:kernel}
    \begin{enumerate}[itemsep=-.5ex,label=(\roman*)]  
    \item \label{asp:kernel1} We define $K_h(x)=h^{-d/2}K(\|h^{-1/2}x\|)$, with $\|\cdot\|$ representing the Euclidean norm and $K(\cdot):\mathbb{R}^{\geq 0}\to \mathbb{R}^{\geq 0}$ assumed to be of a bounded support, $\supp(K)$, i.e, the diameter of $\supp(K)$, denoted by $d_K$, is finite.   
    
    \item \label{asp:kernel3} We assume $K$ is Lipchitz over $\supp(K)$.
    
        \item \label{asp:kernel2} We assume $h \to 0$ and $n h^{m/2} \to \infty$.
    \end{enumerate}
\end{assumption}

\begin{remark}
Assumptions \ref{asp:kernel}\ref{asp:kernel1} and \ref{asp:kernel}\ref{asp:kernel3} are added for simplifying the subsequent theoretical analysis. They are satisfied by using, e.g., the box kernel or a truncated Gaussian kernel. Lastly, Assumption \ref{asp:kernel}\ref{asp:kernel2} is easily satisfied by choosing not too small an $h$.
\end{remark}

In the end, we regulate the causal forest algorithm introduced in Section \ref{sec:setup}. In the following, for any real sequences $\{a_n\}, \{b_n\}$, we adopt the notation ``$a_n\lesssim b_n$'' to mean $|a_n|\leq C|b_n|$ for all sufficiently large $n$. We write $a_n\asymp b_n$ if both $a_n\lesssim b_n$ and $b_n\lesssim a_n$ hold.

\begin{assumption} \phantomsection \label{asp:rf1}
We assume that 
\begin{enumerate}[itemsep=-.5ex,label=(\roman*)]
\item\label{asp:rf1-1} $s=s_n \rightarrow \infty$ and $n/B=O(1)$;
\item\label{asp:rf1-2} there exists a constant $ \epsilon \in(0, 1/(m+2))$ such that for any tree $T$ considered in Section \ref{sec:setup} and any leaf $L_t$ in $T$,    
$$
\Big\lVert{\rm diam}(L_t\cap\cM)\Big\rVert_\infty \lesssim h^{1/2+\epsilon},~~{\rm uniformly},
$$
where ${\rm diam}(\cdot)$ outputs the diameter of the input set and $\|\cdot\|_{\infty}$ stands for the $L^{\infty}$ norm.
\end{enumerate}
\end{assumption}

\begin{assumption} \phantomsection \label{asp:rf-honest}
\begin{enumerate}[itemsep=-.5ex,label=(\roman*)]
\item\label{asp:rf-honest-1} We assume that for the same $\epsilon$ in Assumption~\ref{asp:rf1},  there exists a constant $\beta>0$ such that the subsample size $s$ satisfies $\lim _{n \rightarrow \infty} s^3 h^{\beta}=\infty$, and for $\omega\in\{0,1\}$,
$$
\Big\lVert \E\Big[\rvert L^{\omega}(X_1) \rvert^{-\beta}\Biggiven D_1=\omega, D_2,\ldots,D_n,X_1,1\in\cI^\omega\Big] \Big\lVert_\infty \lesssim s^{-2-\beta/2} (\log s)^{-\beta/2}h^{(2+\beta)\epsilon},
$$
where $\lvert L \rvert$ represents the number of samples in a leaf $L$ and the subscript $b$ in both $L^\omega$ and $\cI^\omega$ is hidden.
\item \label{asp:rf1-3} The tree is honest, that is, the partition is independent of $Y_i$'s.
\end{enumerate}
\end{assumption}

\begin{assumption} \phantomsection \label{asp:rf-extr-honest}
\begin{enumerate}[itemsep=-.5ex,label=(\roman*)]
\item \label{asp:rf-extr-honest-1}
We assume that the leaf size satisfies that for $\omega\in\{0,1\}$,
$$
\Big\lVert \E\Big[\Big(\zeta_\omega(L^\omega(X_1)\cap\cM)\Big)^{-1}\Biggiven  D_1=\omega, D_2,\ldots,D_n,X_1, 1 \in \cI^{\omega}\Big]\Big\rVert_\infty
\lesssim sh^{2\epsilon}.
$$

\item \label{asp:rf-extr-honest-2}
The tree is extremely honest, that is, the partition is independent of the data.
\end{enumerate}
\end{assumption}

\begin{remark}
Assumption \ref{asp:rf1}\ref{asp:rf1-1} weakens the first part of Assumption 5.1(ii) in \cite{lin2022regression} about the subsample size $s$. Assumption \ref{asp:rf1}\ref{asp:rf1-2} as well as Assumptions \ref{asp:rf-honest} and \ref{asp:rf-extr-honest} enforce different decaying rates for the diameter of $L_t$, the number of samples in each leaf, and the subsample size $s$. They are stronger than the corresponding requirements made in Assumptions 5.1 and 5.3 of \cite{lin2022regression}, but are necessary for studying CATE, instead of ATE, estimation. These assumptions can be satisfied by properly trimming the trees while growing the forest; see also Lemma 5.1 and Proposition 5.1 in \cite{lin2022regression}. Assumption \ref{asp:rf-honest}\ref{asp:rf1-3} and Assumption \ref{asp:rf-extr-honest}\ref{asp:rf-extr-honest-2} are the famous ``honesty''  \citep{wager2018estimation} and ``purely random forests'' conditions \citep{breiman2004consistency,biau2008consistency,biau2012analysis}, both of which can be satisfied by, e.g., implementing sample splitting or employing a pre-trained random forest model. 
\end{remark}

\begin{remark}
By imposing a stronger independence condition between the partitions and the data, as compared to Assumption \ref{asp:rf-honest}, Assumption \ref{asp:rf-extr-honest} effectively eases the constraints on the leaf size, $|L_t|$. This relaxation plays a pivotal role in the subsequent derivation of a minimax optimal result for $\hat\tau_{\rm RF}(x)$. It is noteworthy that we employ the ``extremely honest'' condition primarily for its technical convenience, as elaborated in, for instance, the discussion presented in \citet[Section 3.1]{biau2016random}.
\end{remark}

\begin{remark}
As a matter of fact, under extremely honest condition, Assumptions \ref{asp:rf1}\ref{asp:rf1-2} and \ref{asp:rf-extr-honest}\ref{asp:rf-extr-honest-1} can be replaced by
\begin{align*}
    \Big\lVert \max_t{\rm diam}(L_t\cap\cM) \Big\rVert_{4m} \lesssim h^{1/2+4 \epsilon} ~~\text{and}~~ \Big\lVert \Big(\max_t \zeta_{\omega}(L_t^{\omega}\cap\cM)\Big)^{-1} \Big\rVert_{4} \lesssim sh^{2\epsilon},
\end{align*}
for $\omega \in \{0,1\}$. Here $\lVert\cdot\rVert_{p}$ stands for the $L^{p}$ norm. This is by modifying the proof of Lemmas~\ref{lemma:rf-clt}-\ref{lemma:rf-mse-discrepancy} in the appendix and conditioning on $\{L^{\omega}_{bt}\}_{t \ge 1}$ instead of just on $\{D_j\}_{j=1}^n$ and $X_i$. However, in practice, this difference appears to be not huge, and accordingly we choose to present them at their current forms.
\end{remark}


With the above assumptions held, we are then ready to introduce the first theorem about the causal forest estimator $\hat\tau_{\rm RF}$.

\begin{theorem}[Consistency] \label{thm:rf-consistency}
Assume Assumptions~\ref{asp:causal}-\ref{asp:rf1} hold, with either Assumption~\ref{asp:rf-honest} or Assumption~\ref{asp:rf-extr-honest}, and in \eqref{eq:causalf} set $\hat\mu_0=\hat\mu_1=0$. We then have, for any interior point $x\in\supp(\zeta)$,
$$
\hat\tau_{\rm RF}(x)-\tau(x)\stackrel{\sf p}{\longrightarrow} 0.
$$
\end{theorem}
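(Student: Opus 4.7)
The plan is to write $\hat\tau_{\rm RF}(x) = N(x)/D(x)$ with
$$D(x) := \sum_{i=1}^n K_h(X_i-x), \qquad N(x) := \sum_{i=1}^n K_h(X_i-x)\bigl(\hat Y_i^{\rm RF}(1)-\hat Y_i^{\rm RF}(0)\bigr),$$
and analyze the two ingredients separately via a manifold kernel-smoothing argument. For the denominator, a coordinate-chart/local-tangent-plane computation, using Assumption~\ref{asp:manifold} on the Hausdorff density $f$ together with Assumption~\ref{asp:kernel} on $K$, yields a first moment
$$\E\bigl[K_h(X-x)\bigr]= h^{m/2}\, f(x)\int_{T_x\cM} K(\lVert u\rVert)\,\mathrm d u\,(1+o(1)),$$
where the exponential-map change of variables introduces a $(1+o(1))$ Jacobian because $\cM$ is smooth and $h^{1/2}\to 0$. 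The variance of $D(x)/n$ is of order $(nh^{m/2})^{-1}\to 0$ by Assumption~\ref{asp:kernel}\ref{asp:kernel2}. Hence $D(x)/(nh^{m/2})\stackrel{\sf p}{\to} c_K f(x)>0$ at any interior $x\in\supp(\zeta)$.

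I would next split the numerator according to the treatment label into the \emph{observed-outcome piece}
$$N_1(x):=\sum_{D_i=1}K_h(X_i-x)Y_i-\sum_{D_i=0}K_h(X_i-x)Y_i$$
and the \emph{forest piece}
$$N_2(x):=\sum_{D_i=0}K_h(X_i-x)\hat Y_i^{\rm RF}(1)-\sum_{D_i=1}K_h(X_i-x)\hat Y_i^{\rm RF}(0).$$
For $N_1$, a standard Nadaraya-Watson-type argument on the manifold (using Assumption~\ref{asp:causal}\ref{asp:causal-2}-\ref{asp:causal-3}) shows $N_1(x)/D(x)\stackrel{\sf p}{\to}\mu_1(x)p(x)-\mu_0(x)(1-p(x))$, where $p(x):=\Pr(D=1\mid X=x)\in(\eta,1-\eta)$.

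The critical step is the forest piece. For each $i$ with $D_i=1-\omega$, I would decompose
$$\hat Y_i^{\rm RF}(\omega)-\mu_\omega(X_i)=\underbrace{\bigl(\hat Y_i^{\rm RF}(\omega)-\E[\hat Y_i^{\rm RF}(\omega)\mid X_i,\mathcal P]\bigr)}_{\text{variance term}}+\underbrace{\bigl(\E[\hat Y_i^{\rm RF}(\omega)\mid X_i,\mathcal P]-\mu_\omega(X_i)\bigr)}_{\text{bias term}},$$
where $\mathcal P$ denotes the tree partitions (independent of responses under Assumption~\ref{asp:rf-honest}\ref{asp:rf1-3} or Assumption~\ref{asp:rf-extr-honest}\ref{asp:rf-extr-honest-2}). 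Honesty collapses the bias term into a within-leaf average of $\mu_\omega(X_j)-\mu_\omega(X_i)$, which is $o(1)$ uniformly since $\mathrm{diam}(L_b^\omega(X_i)\cap\cM)\lesssim h^{1/2+\epsilon}\to 0$ by Assumption~\ref{asp:rf1}\ref{asp:rf1-2} and $\mu_\omega$ is continuous by Assumption~\ref{asp:causal}\ref{asp:causal-3}. The variance term, conditional on $\mathcal P$, has $L^2$ norm of order the inverse square root of the leaf count, controlled to $o(1)$ by the leaf-size moment bound in Assumption~\ref{asp:rf-honest}\ref{asp:rf-honest-1} (or via Assumption~\ref{asp:rf-extr-honest}\ref{asp:rf-extr-honest-1} under extreme honesty); averaging across the $B$ subsampling rounds only helps. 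Substituting back and reapplying the denominator analysis gives $N_2(x)/D(x)\stackrel{\sf p}{\to}(1-p(x))\mu_1(x)-p(x)\mu_0(x)$.

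Summing the two pieces, $p(x)$ cancels and Slutsky yields $\hat\tau_{\rm RF}(x)\stackrel{\sf p}{\to}\mu_1(x)-\mu_0(x)=\tau(x)$. The main obstacle will be the forest-consistency step: obtaining an $o_{\sf p}(1)$ bound on $\hat Y_i^{\rm RF}(\omega)-\mu_\omega(X_i)$ that is uniform enough over the $i$'s falling into the kernel window around $x$ to survive the weighted sum against $K_h$. This is precisely where the shrinking-diameter bound, the leaf-mass lower bound, and honesty must be marshaled together, strengthening the ATE-oriented leaf-control lemmas of \cite{lin2022regression} to the pointwise manifold CATE setting.
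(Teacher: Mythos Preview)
Your route is genuinely different from the paper's. The paper never tries to show that each imputation $\hat Y_i^{\rm RF}(\omega)$ is close to $\mu_\omega(X_i)$. Instead it verifies, for the random-forest weights $w_{i\leftarrow j}$, the abstract conditions of the general framework in Section~\ref{sec:framework} (Assumption~\ref{asp:permutation} and the propensity part of Assumption~\ref{asp:dr-propensity}\ref{asp:dr-propensity-2}) and then invokes Theorem~\ref{thm:dr} with $\hat\mu_\omega=\bar\mu_\omega=0$. The technical core is Lemmas~\ref{lemma:rf-dr-propensity-2-sum}--\ref{lemma:rf-dr-propensity-2-single}, which through a seven-term decomposition establish that the \emph{aggregated} weight satisfies
\[
\sum_{j:D_j=1-D_i} K_{h,j}\,w_{j\leftarrow i}\;\approx\;K_{h,i}\Bigl(D_i\tfrac{1-e(X_i)}{e(X_i)}+(1-D_i)\tfrac{e(X_i)}{1-e(X_i)}\Bigr),
\]
i.e.\ the forest, viewed through the kernel, implicitly reproduces the propensity odds. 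Consistency then comes from the IPW-type term in the decomposition~\eqref{eq:thm-dr1-5part}; the outcome-regression branch (Assumption~\ref{asp:dr-outcome}) is never used, and with $\hat\mu_\omega=0$ its part~\ref{asp:dr-outcome-1} would fail anyway.

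Your direct plug-in route is attractive, but the variance step as written has a real gap. The imputations $\hat Y_i^{\rm RF}(\omega)$ for different $i$ all draw on the \emph{same} pool of residuals $U_j=Y_j-\mu_\omega(X_j)$, so per-$i$ $L^2$ bounds of order $|L^\omega_b(X_i)|^{-1/2}$ do not sum: the cross terms do not vanish. The honest bookkeeping swaps the order of summation to write the kernel-weighted variance piece as $D(x)^{-1}\sum_{D_j=\omega}U_j\,W_j$ with $W_j:=\sum_{D_i=1-\omega}K_h(X_i-x)\,w_{i\leftarrow j}$, whose conditional variance is $D(x)^{-2}\sum_j\sigma_\omega^2(X_j)W_j^2$. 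Bounding $\E[W_j^2]$ is exactly the content of the single-index estimate~\eqref{asp:dr-propensity-2-single}, whose proof (Lemma~\ref{lemma:rf-dr-propensity-2-single}) is again the seven-term decomposition combining the diameter bound, honesty, and the leaf controls of Assumptions~\ref{asp:rf-honest}\ref{asp:rf-honest-1}/\ref{asp:rf-extr-honest}\ref{asp:rf-extr-honest-1}. Note also a subtle mismatch: those leaf controls are stated for a \emph{training} point ($D_1=\omega$, $1\in\cI^\omega$), whereas your sketch implicitly needs them at a \emph{test} point $D_i=1-\omega$; bridging this is part of the same seven-term argument. So once made rigorous, your variance step collapses into the paper's core lemma; what your framing buys is cleaner intuition, while the paper's buys the structural insight that the forest automatically supplies a correctly specified propensity model.
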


Next, we establish the rate of convergence for $\hat\tau_{\rm RF}(x)$ approximating $\tau(x)$ based on the criterion of the MSE. To this end, some additional assumptions on $\mu_\omega$ and the kernel function are needed.

\begin{assumption} \phantomsection \label{asp:mse-mu}  For $\omega \in \{0,1\}$, assume $\mu_\omega$ to be Lipchitz over $\supp(\zeta)$.
\end{assumption}



\begin{assumption} \phantomsection \label{asp:mse-k}  
We assume $K$ is bounded away form zero over $\supp(K)$.
\end{assumption}

\begin{remark}
Assumption \ref{asp:mse-mu} is a stronger version of the continuity condition in Assumption \ref{asp:causal}\ref{asp:causal-3} and requires Lipschitz continuity of the regression functions. This type of smoothness is routinely posed; cf. \citet[Theorem 3.1]{wager2018estimation}. 
\end{remark}

\begin{theorem}[MSE] \label{thm:rf-mse}
Assume Assumptions~\ref{asp:causal}-\ref{asp:rf1}, \ref{asp:mse-mu}-\ref{asp:mse-k} hold, with either Assumption~\ref{asp:rf-honest} or Assumption~\ref{asp:rf-extr-honest} held, and in \eqref{eq:causalf} set $\hat\mu_0=\hat\mu_1=0$. We then have, for any interior point $x\in\supp(\zeta)$,
$$
{\rm MSE}(\hat\tau_{\rm RF}(x)):=\E\Big[\Big(\hat\tau_{\rm RF}(x)-\tau(x)\Big)^2\Big]\lesssim h+\frac{1}{nh^{m/2}},
$$
In particular, when Assumption~\ref{asp:rf-extr-honest} holds, and as we choose $h\asymp n^{-2/(m+2)}$,
it holds that the MSE admits the following minimax risk for estimating a Lipschitz function, 
$$
{\rm MSE}(\hat\tau_{\rm RF}(x)) \lesssim n^{-2/(m+2)},
$$
with the ambient dimension $d$ replaced by the intrinsic manifold dimension $m$.
\end{theorem}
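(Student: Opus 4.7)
The plan is a standard bias--variance decomposition, where the bookkeeping lies in tracking how the intrinsic dimension $m$ appears through the Hausdorff density and how the forest's dependence structure is controlled by (extreme) honesty. First, condition on the sigma-algebra $\mathcal{F}$ generated by $\{(X_i,D_i)\}_{i=1}^n$ together with all subsamples $\{\cI_b^{\omega}\}$ and all tree partitions $\{L_b^{\omega}\}$; by honesty the residuals $U_\omega(X_j) := Y_j - \mu_\omega(X_j)$ remain conditionally mean-zero given $\mathcal{F}$. Writing $N_h(x) := \sum_{i=1}^n K_h(X_i - x)$, split
$$
\hat\tau_{\mathrm{RF}}(x) - \tau(x) = \Big(\hat\tau_{\mathrm{RF}}(x) - \E[\hat\tau_{\mathrm{RF}}(x)\mid \mathcal{F}]\Big) + \Big(\E[\hat\tau_{\mathrm{RF}}(x)\mid \mathcal{F}]-\tau(x)\Big),
$$
and control the variance of the first term and the square of the second in turn.

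For the bias term, taking the conditional expectation replaces each outcome-average inside $\hat Y_i^{\mathrm{RF}}(\omega)$ by the corresponding $\mu_\omega$-average over the $X_j$'s landing in the same leaf as $X_i$. Lipschitzness of $\mu_\omega$ (Assumption~\ref{asp:mse-mu}) together with the leaf-diameter bound $h^{1/2+\epsilon}$ from Assumption~\ref{asp:rf1}\ref{asp:rf1-2} then yields $\bigl|\E[\hat Y_i^{\mathrm{RF}}(1)-\hat Y_i^{\mathrm{RF}}(0)\mid\mathcal{F}] - \tau(X_i)\bigr| \lesssim h^{1/2+\epsilon}$ uniformly in $i$. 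A kernel-smoothing argument on the manifold, carried out in a chart around $x$ via Assumption~\ref{asp:manifold}\ref{asp:conditional} and using $|\tau(X_i)-\tau(x)| \lesssim \|X_i-x\| \lesssim h^{1/2}$ on the kernel support, then shows that the kernel-weighted sum of $\tau(X_i)$'s deviates from $\tau(x)$ by $O(h^{1/2})$ once normalized by $N_h(x)$. The denominator concentrates near $\E[K_h(X-x)] \asymp h^{(m-d)/2}$, which is where $m$ enters: on a chart the Hausdorff measure of $B(x, \sqrt{h}\,d_K) \cap \mathcal{M}$ is of order $h^{m/2}$, while the kernel amplitude is $h^{-d/2}$. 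Squaring, the bias contributes $O(h)$ to the MSE.

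For the stochastic term, decompose the conditional variance further. Given $\mathcal{F}$, each centered $\hat Y_i^{\mathrm{RF}}(\omega)$ is a $B$-fold bag-average of leaf-averages of the bounded residuals $U_\omega$, so boundedness of $U_\omega$ (Assumption~\ref{asp:causal}\ref{asp:causal-3}) plus the reciprocal-leaf-measure control from Assumption~\ref{asp:rf-extr-honest}\ref{asp:rf-extr-honest-1} (or its moment analogue in Assumption~\ref{asp:rf-honest}\ref{asp:rf-honest-1}) give $\mathrm{Var}(\hat Y_i^{\mathrm{RF}}(\omega)\mid\mathcal{F}) = O(1)$ uniformly in $i$. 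Cross-covariances between predictions at different $i$ are controlled because pairs of units coincide in the same leaf only within a fraction of trees that is small in $s$, damped further by $n/B = O(1)$. Plugging into the kernel smoother and using $\sum_i K_h(X_i-x)^2 \asymp n h^{m/2 - d}$ together with $N_h(x)^2 \asymp n^2 h^{m-d}$ yields
$$
\mathrm{Var}\big(\hat\tau_{\mathrm{RF}}(x)\bigm|\mathcal{F}\big) \lesssim \frac{n h^{m/2 - d}}{n^2 h^{m-d}} = \frac{1}{n h^{m/2}},
$$
and taking outer expectations preserves this bound. Adding the squared bias gives $\mathrm{MSE}(\hat\tau_{\mathrm{RF}}(x)) \lesssim h + 1/(nh^{m/2})$, and $h \asymp n^{-2/(m+2)}$ balances the two at the minimax rate $n^{-2/(m+2)}$ for Lipschitz regression on an $m$-manifold.

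The main obstacle will be the tight variance bound $O(1/(n h^{m/2}))$. Under ordinary honesty the partitions depend on the $X_j$'s, so the kernel weights $K_h(X_i - x)$ are coupled with the leaf structure, and disentangling this coupling while preserving the effective sample size $n h^{m/2}$ requires the moment bound of Assumption~\ref{asp:rf-honest}\ref{asp:rf-honest-1} to dominate a Cauchy--Schwarz split over bags and units; the extra factors $s^{-2-\beta/2}(\log s)^{-\beta/2} h^{(2+\beta)\epsilon}$ must be absorbed using Assumption~\ref{asp:rf1} and $n/B = O(1)$. This is presumably the technical reason the minimax half of the statement is stated under the stronger extreme-honesty condition, where partitions and data decouple and the bound on $\zeta_\omega(L^\omega(X_1)\cap\mathcal{M})^{-1}$ in Assumption~\ref{asp:rf-extr-honest}\ref{asp:rf-extr-honest-1} immediately delivers the per-unit conditional variance and the between-unit covariance bound in a single step.
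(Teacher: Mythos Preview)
Your bias analysis is correct and matches the paper's handling of the $\bar\tau(\mX,x)-\tau(x)$ and $B_n(x)$ pieces of its decomposition $\hat\tau_{\rm RF}=\bar\tau+F_n+B_n$. The gap is in the variance term $F_n$. The bound $\mathrm{Var}(\hat Y_i^{\rm RF}(\omega)\mid\mathcal F)=O(1)$ is immediate from $\sum_j w_{i\leftarrow j}=1$ and boundedness of $U_\omega$ alone---it does \emph{not} require the leaf-measure assumptions---and it is not sufficient: the imputed outcomes $\hat Y_i^{\rm RF}(1-D_i)$ for different $i$ in the kernel window all draw on the \emph{same} pool of residuals $\{\epsilon_j:D_j=1-D_i\}$, so the cross-covariances are of the same order as the diagonal terms. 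Your heuristic ``pairs coincide in a leaf only in a fraction of trees small in $s$'' does not control them, and the calculation $\sum_i K_{h,i}^2\cdot O(1)/N_h(x)^2$ you wrote implicitly treats the $\hat Y_i^{\rm RF}$ as conditionally independent across $i$, which they are not.

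The paper resolves this by first reversing the order of summation so that the (centered) numerator becomes $\sum_j(2D_j-1)\big[K_{h,j}+\sum_{i:D_i=1-D_j}K_{h,i}\,w_{i\leftarrow j}\big]\epsilon_j$, a sum of conditionally independent terms in $j$, and then proving the key \emph{propensity-score approximation} for the reversed forest weights,
\[
\sum_{i:D_i=1-D_j}K_{h,i}\,w_{i\leftarrow j}\ \approx\ K_{h,j}\cdot\frac{1-e(X_j)}{e(X_j)}\qquad(\text{and symmetrically for }D_j=0),
\]
in the $L^2$ sense of Assumption~\ref{asp:dr-propensity}\ref{asp:dr-propensity-2}. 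This is where Assumptions~\ref{asp:rf-honest} or~\ref{asp:rf-extr-honest} actually enter: they are used (Lemma~\ref{lemma:rf-clt}) to show $|L^\omega(X_j)|\approx s\,\zeta_\omega(L^\omega(X_j)\cap\mathcal M)$, after which the leaf-diameter bound and local Lipschitzness of $g_{\omega,x}$ compare $\zeta_0/\zeta_1$ on the leaf to the pointwise odds ratio $(1-e)/e$. Once the reversed weight is $\approx K_{h,j}/e(X_j)$, $F_n$ behaves like a Horvitz--Thompson sum of independent terms and the $1/(nh^{m/2})$ rate follows immediately. This approximation (Lemmas~\ref{lemma:rf-dr-propensity-2-sum}--\ref{lemma:rf-dr-propensity-2-single}, a seven-term argument) is the technical heart of the proof and is entirely absent from your sketch.
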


Of note, when Assumption \ref{asp:rf-extr-honest}  holds, the rate in Theorem \ref{thm:rf-mse} is {\it minimax optimal} for pointwisely estimating a Lipschitz smooth function \citep{yang1999information} when the covariate resides in an $m$-dimensional space. The message conveyed here then is clear: causal forests are able to rate-optimally adapt to the unknown manifold structure.

If only an honest condition as made in Assumption \ref{asp:rf-honest} holds, the situation is a little bit more complex. In detail,  
if in Assumption \ref{asp:rf1}\ref{asp:rf1-2} we indeed have ${\rm diam}(L_t) \asymp h^{1/2+\epsilon}$, then by Assumption \ref{asp:manifold}\ref{asp:manifold-f},  the displayed constraint in  Assumption~\ref{asp:rf-honest}\ref{asp:rf-honest-1} restricts that 
\[
  \frac{n^{1-4/\beta}}{\log(n)} h^{m+2(m+1)\epsilon+ 4\epsilon/\beta}\gtrsim 1,
 \]
 with $s \asymp n$. If we we set $\epsilon$ in Assumption \ref{asp:rf1}\ref{asp:rf1-2} to be sufficiently close to 0 and $\beta$ in Assumption \ref{asp:rf-honest} to be sufficiently large, then when $m>1$, the above displayed constraint yields $h\gtrsim (nh^{m/2})^{-1}$ and thus an ${\rm MSE}(\hat\tau_{\rm RF}(x)) \lesssim h$, yielding an approximately ``best'' rate 
 $$
 (\log n)^{1/(m+2(m+1)\epsilon+ 4\epsilon/\beta)}/n^{(1-4/\beta)/(m+2(m+1)\epsilon+ 4\epsilon/\beta)},
 $$ which is apparently suboptimal. The phenomenon of obtaining a sub-optimal rate in analyzing even a simple random forest model (e.g., honest/pure ones) is again well known; see, for example, \cite{biau2012analysis} and \cite{klusowski2021sharp}. The fact that we can obtain an improved, minimax optimal, rate via adopting Assumption \ref{asp:rf-extr-honest}, on the other hand, is surprising and suggests the importance of leveraging kernel smoothing in \eqref{eq:causalf}.

Lastly, we consider the problem of statistically inferring $\tau(x)$ using $\hat\tau_{\rm RF}(x)$. This is done by {\it under-smoothing} $\hat\tau_{\rm RF}(x)$, as well as enforcing the following additional moment assumptions on the data generating distribution. Below, Assumption \ref{asp:clt-se} corresponds to Assumption 3.5 in \cite{lin2022regression}, which is put in order to leverage the Lindberg-Feller central limit theorem for analyzing $\hat\tau_{\rm RF}(x)$. Assumption \ref{asp:clt-sigma} is added for facilitating a closed form of the asymptotic variance of $\hat\tau_{\rm RF}(x)$; cf. the term $\Sigma(x)$ in Equation \eqref{eq:sigmax} ahead.

\begin{assumption}  \phantomsection \label{asp:clt-se} 
For any interior point $x \in \supp(\zeta)$ and $\omega \in \{0,1\}$, $\E [U^2_\omega(x) \given X=x]$ is uniformly bounded away from zero.
\end{assumption}

\begin{assumption} \phantomsection \label{asp:clt-sigma} 
We assume 
$\sigma^2_\omega(x) := \E [U^2_\omega(x) \given X=x] $ is Lipchitz over $\supp(\zeta)$ for $\omega \in \{0,1\}$.
\end{assumption}

\begin{theorem}[Central limit theorem] \label{thm:rf-clt}
Assume Assumptions~\ref{asp:causal}-\ref{asp:rf1}, \ref{asp:rf-extr-honest}-\ref{asp:clt-sigma} hold and in \eqref{eq:causalf} set $\hat\mu_0=\hat\mu_1=0$. Further, assume that $nh^{m/2+1}\to 0$. We then have, for any interior point $x \in \supp(\zeta)$, 
    \begin{align*}
        \sqrt{nh^{m/2}} \left(\hat\tau_{\rm RF}\left(x\right)  - \tau\left(x\right) \right) \stackrel{\sf d}{\longrightarrow} N(0,\Sigma(x)),
    \end{align*} 
where    
\begin{align}\label{eq:sigmax}
\Sigma(x):=\frac{1}{c_K^2 f(x)}\left(\frac{\sigma^2_1(x)}{e(x)} + \frac{\sigma^2_0(x)}{1-e(x)}\right)\int_{\mathbb{R}^m} K^2(\lVert t\rVert) \d t
\end{align}
with $c_K:=\int K(\|t\|)\d t$ and $e(x):=\P(D=1|X=x)$ representing the propensity score \citep{rosenbaum1983central}.
\end{theorem}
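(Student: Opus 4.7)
The plan is to prove the CLT by decomposing $\hat\tau_{\rm RF}(x)-\tau(x)$ into a bias term and a mean-zero noise term, showing the bias is negligible under the over-smoothing condition $nh^{m/2+1}\to 0$, approximating the noise by an asymptotically linear statistic in the residuals $\{U_\omega(X_i)\}$, and applying the Lindeberg-Feller CLT. As the starting point, write $\hat\tau_{\rm RF}(x)-\tau(x) = [\mathcal{N}(x)-\tau(x)\mathcal{D}(x)]/\mathcal{D}(x)$ with $\mathcal{N}(x)$ and $\mathcal{D}(x)$ the numerator and denominator in \eqref{eq:causalf}. Arguing as for Theorem~\ref{thm:rf-mse}, Assumption~\ref{asp:manifold} combined with a local manifold-chart change of variables yields $h^{(d-m)/2}\mathcal{D}(x)\stackrel{\sf p}{\longrightarrow} c_K f(x)$; this will be combined via Slutsky at the end.

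For the bias, condition the numerator on $(X_i,D_i)_{i=1}^n$, the subsamples, and the partitions. Since the leaves have diameter $\|\mathrm{diam}(L_t\cap\mathcal{M})\|_\infty\lesssim h^{1/2+\epsilon}$ (Assumption~\ref{asp:rf1}\ref{asp:rf1-2}) and $\mu_\omega$ is Lipschitz (Assumption~\ref{asp:mse-mu}), the conditional mean of $\hat Y_i^{\rm RF}(1)-\hat Y_i^{\rm RF}(0)$ equals $\tau(X_i)+O(h^{1/2+\epsilon})$, and a standard kernel bias calculation then gives a bias of $\hat\tau_{\rm RF}(x)$ of order $O(h^{1/2})$. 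Multiplying by $\sqrt{nh^{m/2}}$ yields $O(\sqrt{nh^{m/2+1}})=o(1)$ under the over-smoothing hypothesis.

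For the noise, track the coefficient attached to each residual $U_1(X_j)$ (the treatment of $U_0$ is symmetric). The direct contribution when $D_j=1$, via $Y_j=\hat Y_j^{\rm RF}(1)$, carries coefficient $K_h(X_j-x)/n$. The indirect contribution arises because $U_1(X_j)$ enters $\hat Y_i^{\rm RF}(1)$ for every $i$ with $D_i=0$ whose covariate lies in the same leaf as $X_j$ under some tree $T_b^1$. Using extreme honesty (Assumption~\ref{asp:rf-extr-honest}\ref{asp:rf-extr-honest-2}) to decouple partitions from the data, and the fact that $\mathrm{diam}(L_t\cap\mathcal{M})=O(h^{1/2+\epsilon})\ll h^{1/2}$ so $K_h(X_i-x)\approx K_h(X_j-x)$ across any single leaf, the indirect weight simplifies in expectation (over subsampling and random partitions) to $K_h(X_j-x)(1-e(x))/(ne(x))$---the relevant calculation pairs the subsampling probability $s/n_1$ with the expected ratio of nearby controls to nearby treated subsample units in the leaf. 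Adding direct and indirect contributions yields the IPW-type effective weight $K_h(X_j-x)/(ne(X_j))$ on $D_j U_1(X_j)$ (and symmetrically $-K_h(X_j-x)/(n(1-e(X_j)))$ on $(1-D_j)U_0(X_j)$). Apply the Lindeberg-Feller CLT conditional on $\{X_i,D_i\}$, subsamples, and partitions: boundedness of $U_\omega$ (Assumption~\ref{asp:causal}\ref{asp:causal-3}) and the kernel scaling verify Lindeberg's condition, and Assumption~\ref{asp:clt-se} ensures the limiting variance is non-degenerate. By a manifold change of variables using the Lipschitz continuity of $\sigma_\omega^2$ (Assumption~\ref{asp:clt-sigma}), the conditional variance converges to $n^{-1}h^{m/2-d}f(x)\bigl(\sigma_1^2(x)/e(x)+\sigma_0^2(x)/(1-e(x))\bigr)\int_{\mathbb{R}^m}K^2(\|t\|)\,\d t$; dividing by $\mathcal{D}(x)^2\sim h^{m-d}c_K^2 f(x)^2$ and multiplying by $nh^{m/2}$ gives exactly $\Sigma(x)$.

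The main obstacle is the derivation of these IPW-type effective weights, i.e.\ rigorously handling the interaction between the forest's leaf averaging and the kernel smoothing, and controlling the remainders from the approximation $K_h(X_i-x)\approx K_h(X_j-x)$ within leaves and from replacing the leaf count ratio by its expectation. This relies on Assumption~\ref{asp:rf-extr-honest}\ref{asp:rf-extr-honest-1}, which forces this count ratio to concentrate on its mean, and on the strict positivity $\epsilon>0$ in Assumption~\ref{asp:rf1}\ref{asp:rf1-2}, which ensures leaves are much smaller than the kernel bandwidth.
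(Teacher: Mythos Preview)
Your proposal is correct and follows essentially the same route as the paper. The paper factors the argument through a general linear-smoother framework (its Theorem~\ref{thm:clt}), decomposing $\hat\tau_w(x)-\tau(x)$ into a kernel-bias piece $\bar\tau-\tau$, an IPW-type noise $E_n$, a remainder $\tilde E_n$ measuring the discrepancy between the forest weights and the IPW weights, and a within-leaf regression bias $B_n-\hat B_n$; it then verifies for the forest that $\sum_{j:D_j=1-D_i}K_{h,j}w_{j\leftarrow i}\approx K_{h,i}\,\phi_{D_i}(X_i)$ with $\phi_1=(1-e)/e$, $\phi_0=e/(1-e)$ (its Lemmas~\ref{lemma:rf-dr-propensity-2-sum}--\ref{lemma:rf-dr-propensity-2-single}), which is exactly your ``IPW-type effective weight'' reduction, and applies Lyapunov's CLT to the resulting i.i.d.\ sum $E_n$. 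Your ``main obstacle'' paragraph correctly isolates the hard step; in the paper this is carried out by a seven-term telescoping decomposition ($T_1,\ldots,T_7$) that separately controls the kernel variation across a leaf, the boundary leaves straddling $\partial B(x,h^{1/2}d_K)$, the fluctuation of leaf counts around $\zeta_\omega(L_t\cap\mathcal M)$, the Lipschitz replacement $\zeta_0/\zeta_1\approx (1-e)/e$, and the concentration $\tfrac{s}{|L(X_1)|}\zeta_1(L(X_1)\cap\mathcal M)\approx 1$ via Assumption~\ref{asp:rf-extr-honest}.
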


\begin{remark}
The asymptotic variance $\Sigma(x)$ can be consistently estimated combining consistent estimators of the manifold density $f(x)$, the propensity score $e(x)$, and the residual variances $\sigma_\omega^2(x)$ for $\omega=0,1$. Here, consistent estimators of manifold densities can be found in, e.g., \cite{pelletier2005kernel} and \cite{le2019approximation}. Propensity score estimation is a classic regression fitting problem, and we refer the readers to \citet[Chapter 13]{imbens2015causal} for a complimentary review. Lastly, estimation of the residual variance in a (nonparametric) regression model has been well studied in the literature; cf. \cite{wang2008effect} and \cite{shen2020optimal} and the references therein for details.
\end{remark}

\begin{remark}
In Theorems \ref{thm:rf-mse} and \ref{thm:rf-clt} and various other places in Section \ref{sec:framework} ahead, for tuning the parameters in order to obtain the optimal rate or check the assumption validity, one needs to have some knowledge about the manifold dimension, $m$. While we agree with \cite{levina2004maximum} that cross-validation should be able to automatically select the ``optimal'' tuning parameters, in theory one could use some of the existing methods for consistently estimating the manifold dimension; see, e.g., \cite{levina2004maximum}, \cite{farahmand2007manifold}, and \cite{block2022intrinsic}. 
\end{remark}

\begin{remark}
In this section, all theoretical results are demonstrated by setting $\hat\mu_0=\hat\mu_1=0$, representing misspecified mean function models. Despite this, our causal forest estimator \eqref{eq:causalf} still enjoys consistency due to its nature as a doubly robust estimator, as will be formally demonstrated in Section \ref{sec:framework}. Using random forests to impute missing potential outcomes automatically provides us with correctly specified odds ratios for the propensity score, a similar phenomenon as formally characterized in \cite{lin2021estimation} for the nearest neighbor matching algorithm and in \cite{lin2022regression} for more general cases.     
\end{remark}

To conclude this section, we make a brief observation regarding the manifold assumption itself. Although our paper primarily concentrates on scenarios where $X$ resides precisely within a smooth manifold, we posit that all the findings should, in principle, extend to cases where $X$ is distributed over an {\it approximate} manifold. This perspective aligns with existing work such as \citet[Section 2.3]{kpotufe2011k} and \citet[Assumption 3]{jiao2023deep}. Nevertheless, the elegant exact manifold assumption proves sufficient to fulfill our research objectives, and we avoid confusing our readers more by delving to more technical assumptions.

\section{A general framework}\label{sec:framework}

This section aims to establish a general framework for analyzing the CATE estimator introduced in \eqref{eq:causalf}, while bearing the potential to cover more. To this end, we adopt the framework introduced in \cite{lin2022regression}, and consider such imputation estimators that belong to the family of  {\it linear smoothers} \citep{buja1989linear}. 

Using the same notation as above, instead of imputing the missing potential outcomes as the forest-based ones, we consider a more general approach, defining 
\begin{align*}
    \hat{Y}_i(0) := \begin{cases}
        Y_i, & \mbox{ if } D_i=0,\\
        \displaystyle \sum_{j:D_j=0} w_{i\leftarrow j} (Y_j + \hat{\mu}_0(X_i) - \hat{\mu}_0(X_j)), & \mbox{ if } D_i=1,     
    \end{cases}  
\end{align*}
and
\begin{align*}
    \hat{Y}_i(1) := \begin{cases}
    \displaystyle  \sum_{j:D_j=1} w_{i\leftarrow j} (Y_j + \hat{\mu}_1(X_i) - \hat{\mu}_1(X_j)), & \mbox{ if } D_i=0,\\   
      Y_i, & \mbox{ if } D_i=1,
    \end{cases}    
\end{align*}
where $w_{i\leftarrow j}$'s are the {\it smoothing parameters} usually learnt from the data. The corresponding CATE estimator is then defined to be
$$
\hat\tau_{w}(x):=\frac{\sum_{i=1}^nK_h(X_i-x)(\hat Y_i(1)-\hat Y_i(0))}{\sum_{i=1}^nK_h(X_i-x)},
$$
with the subindex $w$ in $\hat\tau_{w}(x)$ highlighting the dependence of the general CATE on the smoothing parameters. In particular, when setting
$$
    w_{i\leftarrow j}:= \frac1B \sum_{b=1}^B \frac{\ind\Big(j \in \cI^{1-D_i}_b:X_j \in L^{1-D_i}_b(X_i)\Big)}{\Big\lvert \Big\{k \in \cI^{1-D_i}_b:X_k \in L^{1-D_i}_b(X_i)\Big\} \Big\rvert},
$$
we recover the forest-based estimator $\hat\tau_{\rm RF}(x)$.

We now provide some general regulations on the smoothing parameters $w_{i\leftarrow j}$'s and the regression adjustment terms $\hat\mu_0,\hat\mu_1$. They are in parallel to the corresponding assumptions in \cite{lin2022regression} (suggested in the parenthesis of each assumption) and are intent to be general.

\begin{assumption}[Assumption 3.2 in \cite{lin2022regression}] \phantomsection \label{asp:permutation}
    \begin{enumerate}[itemsep=-.5ex,label=(\roman*)]
        \item\label{asp:permutation-1} Consider any permutation $\pi: \zahl{n} \to \zahl{n}$. Let $[w_{i\leftarrow j}]_{D_i+D_j=1}$ and $[w^\pi_{i\leftarrow j}]_{D_i+D_j=1}$ be the weights constructed by $[(X_i,D_i,Y_i)]_{i=1}^n$ and $[(X_{\pi(i)},D_{\pi(i)},Y_{\pi(i)})]_{i=1}^n$, respectively. It is assumed that for any $i,j \in \zahl{n}$ such that $D_i+D_j=1$ and any permutation $\pi$, $w_{i\leftarrow j} = w^\pi_{\pi(i)\leftarrow \pi(j)}$ holds true.
        \item \label{asp:permutation-2} The weights satisfy
        \begin{align*}
            \sum_{j:D_j=1-D_1} w_{1\leftarrow j}=1.
        \end{align*}
    \end{enumerate}
\end{assumption}

\begin{assumption} \phantomsection \label{asp:wij} 
Assume that, $\sum_{j:D_j=1-D_i} \lvert w_{i\leftarrow j}\rvert$ is bounded.
\end{assumption}

\begin{assumption}[Assumption 3.3 in \cite{lin2022regression}] \phantomsection \label{asp:dr-propensity}
    \begin{enumerate}[itemsep=-.5ex,label=(\roman*)]
        \item\label{asp:dr-propensity-1} We assume that for $\omega \in \{0,1\}$ a deterministic function $\bar{\mu}_\omega(\cdot):\bR^d \to \bR$ exists such that (a) $\bar{\mu}_\omega(x)$ is uniformly bounded for almost all $x \in \supp(\zeta)$, and (b) the regression adjustment term $\hat{\mu}_\omega(x)$ satisfies
        $$
            \lVert \hat{\mu}_\omega - \bar{\mu}_\omega \rVert_\infty = o_\P(1),
        $$
        where $\|\cdot\|_{\infty}$ represents the infinity norm (conditional on the data).
        \item\label{asp:dr-propensity-2} For all $x \in \supp(\zeta)$, the weights satisfy that there exists $\epsilon>0$, such that
        \begin{equation}\label{asp:dr-propensity-2-sum}
            h^{d-m}\E \Big[\Big( \frac{1}{n} \sum_{i=1}^n \Big(\sum_{j:D_j=1-D_i} K_{h,j}w_{j\leftarrow i} - K_{h,i}\Big(D_i \frac{1-e(X_i)}{e(X_i)} + (1-D_i) \frac{e(X_i)}{1-e(X_i)} \Big)\Big) \Big)^2\Big]
            = o(1),
        \end{equation}
        and
        \begin{equation}\label{asp:dr-propensity-2-single}
            h^{d-m}\E \Big[ \Big(\sum_{j:D_j=1-D_1} K_{h,j}w_{j\leftarrow 1} - K_{h,1}\Big(D_1 \frac{1-e(X_1)}{e(X_1)} + (1-D_1) \frac{e(X_1)}{1-e(X_1)} \Big) \Big)^2\Big] \lesssim h^{-m/2} \Big(h^{\epsilon}+\frac{1}{n}\Big),
        \end{equation}
 where $K_{h,i}:= h^{-d/2}K(\|h^{-1/2}(X_i-x)\|)$.       
    \end{enumerate}
  \end{assumption}

\begin{assumption}[Assumption 3.4 in \cite{lin2022regression}] \phantomsection \label{asp:dr-outcome}
    \begin{enumerate}[itemsep=-.5ex,label=(\roman*)]
        \item\label{asp:dr-outcome-1} It is assumed that, for $\omega \in \{0,1\}$, the regression adjustment term $\hat{\mu}_\omega(x)$ satisfies
        $$
            \lVert \hat{\mu}_\omega - \mu_\omega \rVert_\infty = o_\P(1).
        $$
        \item\label{asp:dr-outcome-2} The weights $[w_{1\leftarrow j}]_{D_j=1-D_1}$ are constructed by $[(X_i,D_i)]_{i=1}^n$ only and not using any outcome information in $[Y_i]_{i=1}^n$. 
        
        \item\label{asp:dr-outcome-3} For all $x \in \supp(\zeta)$, the weights satisfy
        \begin{align*}
         h^{-m/2} \E\Big[\Big\lvert\sum_{D_j=1-D_1} K\left(\lVert h^{-1/2}\left(X_j-x\right)\rVert \right) w_{j \leftarrow 1}\Big\rvert\Big] \lesssim 1,
        \end{align*}
        and
        \begin{align*}
         \E\Big[\Big( h^{-m/2} \sum_{D_j=1-D_1} K\left(\lVert h^{-1/2}\left(X_j-x\right)\rVert \right) w_{j \leftarrow 1}\Big)^2\Big] \lesssim h^{-m/2}.
        \end{align*}
    \end{enumerate}
\end{assumption}

\begin{remark} Assumption \ref{asp:dr-propensity} corresponds to the case where the mean function models are misspecified but the propensity score model is correctly specified, and the parallel Assumption \ref{asp:dr-outcome} corresponds to the scenario where the mean function models are correctly specified.  
\end{remark}

With the above assumptions held, the first theorem of this section gives a {\it double robustness} result on $\hat\tau_w(\cdot)$. See, e.g., \citet[Proposition 2, Theorem 2]{kennedy2020towards} and \citet[Theorem 1]{diaz2018targeted} about similar observations for other CATE estimators.

\begin{theorem}[Double robustness of $\hat\tau_w$]  \label{thm:dr}
Suppose Assumptions~\ref{asp:causal}-\ref{asp:kernel} and \ref{asp:permutation}-\ref{asp:wij} hold, and either Assumptions~\ref{asp:dr-propensity} or Assumptions~\ref{asp:dr-outcome} is true. We then have for any interior point $x \in \supp(\zeta)$,
\begin{align*}
\hat\tau_w(x)  - \tau(x) \stackrel{\sf p}{\longrightarrow} 0.
\end{align*}
\end{theorem}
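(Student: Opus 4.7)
\textbf{The plan} is to control the numerator and denominator of $\hat\tau_w(x)$ separately and reduce the theorem, via Slutsky, to an $o_{\Pr}$-bound on a residual expression, then to treat the two double-robustness regimes through a single algebraic unfolding. Under Assumption~\ref{asp:manifold}, a standard localization-to-a-chart computation gives
\begin{align*}
\frac{1}{n}\sum_{i=1}^n K_{h,i} = h^{(m-d)/2}\,c_K\,f(x) + o_{\Pr}\bigl(h^{(m-d)/2}\bigr),
\end{align*}
where $c_K:=\int_{\R^m} K(\|t\|)\,\d t$, with $f(x)>0$ by Assumption~\ref{asp:manifold}\ref{asp:manifold-f}. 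Continuity of $\tau$ at $x$ (inherited from Assumption~\ref{asp:causal}\ref{asp:causal-3}) lets us replace $\tau(X_i)$ by $\tau(x)$ up to a negligible localization bias, so the theorem reduces to
\begin{align*}
\frac{1}{n}\sum_{i=1}^n K_{h,i}\bigl[\hat Y_i(1)-\hat Y_i(0)-\tau(X_i)\bigr] = o_{\Pr}\bigl(h^{(m-d)/2}\bigr).
\end{align*}

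\textbf{Next}, I would unfold $\hat Y_i(1)-\hat Y_i(0)-\tau(X_i)$ using $Y_j=\mu_{D_j}(X_j)+U_{D_j}(X_j)$, $\tau=\mu_1-\mu_0$, and the weight-normalization $\sum_{j:D_j=1-D_1}w_{1\leftarrow j}=1$ from Assumption~\ref{asp:permutation}\ref{asp:permutation-2}. For $i$ with $D_i=1$ this yields
\begin{align*}
\hat Y_i(1)-\hat Y_i(0)-\tau(X_i) &= U_1(X_i)-\sum_{j:D_j=0} w_{i\leftarrow j}U_0(X_j) \\
&\quad + \bigl(\mu_0(X_i)-\hat\mu_0(X_i)\bigr)-\sum_{j:D_j=0} w_{i\leftarrow j}\bigl(\mu_0(X_j)-\hat\mu_0(X_j)\bigr),
\end{align*}
with a symmetric identity for $D_i=0$. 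This separates the target into a \emph{noise} piece (in the $U_\omega$'s) and a \emph{bias} piece (in $\mu_\omega-\hat\mu_\omega$). After kernel weighting, the noise piece is handled by swapping the order of summation, $\sum_i K_{h,i}\sum_j w_{i\leftarrow j}\varphi_j=\sum_j\varphi_j\sum_i K_{h,i}w_{i\leftarrow j}$, and then applying a second-moment/Chebyshev argument conditional on the design $\{(X_i,D_i)\}_{i=1}^n$, using the uniform boundedness of $U_\omega$ from Assumption~\ref{asp:causal}\ref{asp:causal-3} together with either Assumption~\ref{asp:dr-outcome}\ref{asp:dr-outcome-3} or the per-observation bound \eqref{asp:dr-propensity-2-single}.

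\textbf{The bias piece} is treated differently in the two regimes. Under Assumption~\ref{asp:dr-outcome}, $\|\hat\mu_\omega-\mu_\omega\|_\infty=o_{\Pr}(1)$, so the bias piece is majorized by $\|\hat\mu_\omega-\mu_\omega\|_\infty\bigl(1+\sum_{j}|w_{i\leftarrow j}|\bigr)$, and Assumption~\ref{asp:wij} propagates this $o_{\Pr}(1)$ factor through the kernel smoothing via Assumption~\ref{asp:dr-outcome}\ref{asp:dr-outcome-3}. Under Assumption~\ref{asp:dr-propensity}, $\hat\mu_\omega$ only converges to some $\bar\mu_\omega$ which need not equal $\mu_\omega$, so the bias piece is not small term-by-term; instead, I swap the summation order and invoke the aggregate bound \eqref{asp:dr-propensity-2-sum} to replace $\sum_{i:D_i=1}K_{h,i}w_{i\leftarrow j}$ by $K_{h,j}\,e(X_j)/(1-e(X_j))$ (and symmetrically on the other side), converting the estimator into an IPW-style form. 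Taking conditional expectation given $X_j$, and using the unconfoundedness and overlap from Assumption~\ref{asp:causal}\ref{asp:causal-2}, the $\bar\mu_\omega$-misspecification contributions cancel against the observed-$Y$ contributions from the complementary branch (e.g., on the $\hat Y(0)$ side, $E[K_{h,1}\bar\mu_0(X_1)e(X_1)]$ from the $D_1=1$ imputation plus $E[K_{h,1}e(X_1)(\mu_0(X_1)-\bar\mu_0(X_1))]$ from the residual reweighting adds to $E[K_{h,1}\mu_0(X_1)e(X_1)]$, which combines with the $D_1=0$ contribution to give $E[K_{h,1}\mu_0(X_1)]$), and the estimator collapses onto $h^{(m-d)/2}c_K f(x)\tau(x)$ in expectation. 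The principal obstacle is this last step: the $L^2$ remainder from applying \eqref{asp:dr-propensity-2-sum}--\eqref{asp:dr-propensity-2-single} must be reconciled with the $h^{(m-d)/2}$ target rate of the ratio estimator — the displays in Assumption~\ref{asp:dr-propensity}\ref{asp:dr-propensity-2} are written with an $h^{d-m}$ prefactor that pairs naturally with the squared denominator, but one must carefully track constants through Cauchy--Schwarz (and the Lipschitzness from Assumptions~\ref{asp:manifold}\ref{asp:conditional} and \ref{asp:kernel}\ref{asp:kernel3}) to confirm the matching.
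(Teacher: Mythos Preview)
Under Assumption~\ref{asp:dr-outcome} your sketch is essentially the paper's Part~II argument and is sound. The gap is in the propensity branch.

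Your noise argument under Assumption~\ref{asp:dr-propensity} conditions on $\{(X_i,D_i)\}$ to make the $U_{D_j}(X_j)$-weighted summands conditionally independent. But Assumption~\ref{asp:dr-propensity} does \emph{not} include the honesty condition Assumption~\ref{asp:dr-outcome}\ref{asp:dr-outcome-2}: the weights $w_{i\leftarrow j}$ may depend on the outcomes, so conditioning on the design does not decouple $U_{D_j}(X_j)$ from its coefficient $\sum_i K_{h,i}w_{i\leftarrow j}$, and the second-moment computation does not collapse to a sum of per-$j$ variances. Separately, your bias piece under Assumption~\ref{asp:dr-propensity} pairs the weight discrepancy against the non-vanishing $(\mu_\omega-\hat\mu_\omega)(X_j)$; the obstacle you flag is genuine, since \eqref{asp:dr-propensity-2-sum}--\eqref{asp:dr-propensity-2-single} control only $\E[(\tfrac{1}{n}\sum_i R_i)^2]$ and $\E[R_1^2]$, not $\tfrac{1}{n}\sum_i R_i\,\varphi(X_i)$ for a generic bounded $\varphi$, and your cancellation route does not close at the needed rate.

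The paper avoids both issues by centering its five-term decomposition at $\bar\mu_\omega$ (the uniform limit of $\hat\mu_\omega$ from Assumption~\ref{asp:dr-propensity}\ref{asp:dr-propensity-1}) rather than at $\mu_\omega$. Every appearance of $(\hat\mu_\omega-\bar\mu_\omega)$ is then $o_\P(1)$ in sup-norm; the weight-discrepancy term is multiplied only by the uniformly bounded $(Y_i-\bar\mu_{D_i}(X_i))$ and is handled via \eqref{asp:dr-propensity-2-sum} directly, with no conditional-independence claim; and the fixed, possibly large $(\mu_\omega-\bar\mu_\omega)$ never meets the weight discrepancy --- it is absorbed into an IPW-type piece $\tfrac{1}{\sum K_{h,i}}\sum_i K_{h,i}(1-D_i/e(X_i))\bar\mu_1(X_i)$, which has mean zero by unconfoundedness, plus a pure IPW term $\tfrac{1}{\sum K_{h,i}}\sum_i K_{h,i}\bigl[D_iY_i/e(X_i)-(1-D_i)Y_i/(1-e(X_i))\bigr]$ that converges to $\tau(x)$ by a kernel law of large numbers. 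In short: add and subtract $\bar\mu_\omega$, not $\mu_\omega$, and the two obstacles in your sketch dissolve.
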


We then move on to study the rates of convergence. 

\begin{assumption}  \phantomsection \label{asp:mse-muhat} Assume that, for $\omega \in \{0,1\}$, $\hat{\mu}_\omega$ is Lipchitz over $\supp\left(\zeta\right)$.
\end{assumption}

\begin{assumption}  \phantomsection \label{asp:mse-discrepancy} There exists some constant $\gamma>1$ such that the weights satisfy
    \begin{equation*}
        \E \Big[\Big(\frac{1}{n} \sum_{i=1}^n h^{-m/2} K\left(\lVert h^{-1/2}\left(X_i-x\right)\rVert\right) \sum_{j:D_j=1-D_i} \lvert w_{i\leftarrow j} \rvert \lVert X_i-X_j \rVert\Big)^{2\gamma}\Big] \lesssim h^{\gamma}.
    \end{equation*}
\end{assumption}
\begin{remark}
    In the context of the central limit theorem, it is important to note that we can ease the Assumption~\ref{asp:mse-discrepancy} to $L^1$ convergence.
\end{remark}

\begin{assumption}  \phantomsection \label{asp:mse-weight} 
    For $\omega \in \{0,1\}$, the estimator $\hat{\mu}_\omega(x)$ satisfies
    $$
    \E\Big[\lVert \mu_\omega - \hat{\mu}_\omega \rVert_\infty^2\Big] \lesssim h.
    $$
\end{assumption}

\begin{theorem}[MSE of $\hat\tau_w(x)$]\label{thm:mse}
Suppose Assumptions~\ref{asp:causal}-\ref{asp:kernel}, \ref{asp:mse-mu}-\ref{asp:mse-k}, \ref{asp:permutation}-\ref{asp:wij}, \ref{asp:dr-outcome}\ref{asp:dr-outcome-2} hold, with either Assumptions~\ref{asp:dr-propensity}\ref{asp:dr-propensity-2}, \ref{asp:mse-muhat}-\ref{asp:mse-discrepancy} or Assumptions~\ref{asp:dr-outcome}\ref{asp:dr-outcome-3}, \ref{asp:mse-weight} held. We then have, for any interior point $x \in \supp(\zeta)$,
    \begin{align*}
        \text{MSE}\left(\hat\tau_w \left(x\right)\right)=\E \Big[\Big(\hat\tau_w(x)  - \tau(x)\Big)^2\Big] \lesssim h + \frac{1}{n h^{m/2}}.
    \end{align*}
In particular, when $h \asymp n^{-{\frac{2}{m+2}}}$,
\begin{equation} \label{thm-mse-h}
   \text{MSE}\left(\hat\tau_w \left(x\right)\right) \lesssim n^{-{\frac{2}{m+2}}}.
\end{equation}
\end{theorem}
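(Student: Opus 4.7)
The plan is a bias-variance style decomposition after inserting the covariate-conditional ``oracle average'' $\sum_i K_{h,i}\tau(X_i)/\sum_i K_{h,i}$. Writing
\begin{align*}
\hat\tau_w(x) - \tau(x) = \underbrace{\frac{\sum_i K_{h,i}\bigl(\hat Y_i(1) - \hat Y_i(0) - \tau(X_i)\bigr)}{\sum_i K_{h,i}}}_{=: T_1} + \underbrace{\frac{\sum_i K_{h,i}\bigl(\tau(X_i) - \tau(x)\bigr)}{\sum_i K_{h,i}}}_{=: T_2},
\end{align*}
I would first replace the random denominator by a deterministic quantity. A standard manifold kernel calculation under Assumption~\ref{asp:manifold} shows $n^{-1}\sum_i K_{h,i}$ concentrates around a positive constant proportional to $f(x)$, with Assumption~\ref{asp:mse-k} keeping it bounded below with high probability. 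The term $T_2$ is the classical smoothing bias: Lipschitzness of $\tau = \mu_1 - \mu_0$ (Assumption~\ref{asp:mse-mu}) with $\|X_i - x\| \lesssim h^{1/2}$ on $\supp K_h(\cdot - x)$ yields $T_2^2 \lesssim h$.

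For $T_1$, on $\{D_i = 1\}$, unpacking $\hat Y_i(0)$ and applying $\sum_{j:D_j=0} w_{i\leftarrow j} = 1$ (Assumption~\ref{asp:permutation}\ref{asp:permutation-2}) produces the doubly robust identity
\begin{align*}
\hat Y_i(1) - \hat Y_i(0) - \tau(X_i) &= U_1(X_i) - \sum_{j:D_j=0} w_{i\leftarrow j} U_0(X_j) \\
&\quad - \sum_{j:D_j=0} w_{i\leftarrow j}\bigl[(\hat\mu_0 - \mu_0)(X_i) - (\hat\mu_0 - \mu_0)(X_j)\bigr],
\end{align*}
and symmetrically on $\{D_i = 0\}$. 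This splits $T_1$ into a noise part (first line) and an adjustment part (second line).

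For the noise part, swapping the summation order so each $U_\omega(X_i)$ is paired with $K_{h,i}$ minus $\sum_{j:D_j = 1-D_i} K_{h,j} w_{j\leftarrow i}$ (using Assumption~\ref{asp:permutation}\ref{asp:permutation-1} for symmetry), I invoke \eqref{asp:dr-propensity-2-single} under the propensity-correct route to substitute the inner sum by its inverse-propensity surrogate modulo the stated $L^2$ remainder, after which conditional independence of the $U_\omega$'s (Assumption~\ref{asp:causal}) and a standard kernel second-moment calculation deliver a variance of order $1/(nh^{m/2})$. Under the outcome-correct route, the $L^2$ bound on the weighted kernel sum in Assumption~\ref{asp:dr-outcome}\ref{asp:dr-outcome-3} gives the same $1/(nh^{m/2})$ variance directly. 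For the adjustment part, under the propensity-correct route Lipschitzness of both $\hat\mu_\omega$ and $\mu_\omega$ (Assumptions~\ref{asp:mse-muhat}, \ref{asp:mse-mu}) bounds each bracket by a constant times $\|X_i - X_j\|$, feeding into the weighted discrepancy moment in Assumption~\ref{asp:mse-discrepancy}, with H\"older's inequality using the slack $\gamma > 1$ converting to an $L^2$ bound of order $h$; in the outcome-correct route, Assumption~\ref{asp:mse-weight} supplies $\E\|\hat\mu_\omega - \mu_\omega\|_\infty^2 \lesssim h$, which paired with the $L^1$ weighted-kernel bound in Assumption~\ref{asp:dr-outcome}\ref{asp:dr-outcome-3} yields the same $O(h)$. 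Combining everything gives $\E T_1^2 \lesssim h + 1/(nh^{m/2})$, and adding $T_2^2 \lesssim h$ produces the stated MSE bound; the optimal choice $h \asymp n^{-2/(m+2)}$ then yields \eqref{thm-mse-h}.

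The main obstacle I anticipate is the propensity-correct adjustment step: translating the naturally arising sum $n^{-1}\sum_i K_{h,i}\sum_{j:D_j = 1-D_i} w_{i\leftarrow j}\|X_i - X_j\|$ into the index-symmetric form required by Assumption~\ref{asp:mse-discrepancy} needs a careful relabeling using Assumption~\ref{asp:permutation}\ref{asp:permutation-1}, and the extra integrability margin $\gamma > 1$ is consumed by a H\"older step that decouples $K_{h,i}$ from the weights. The denominator replacement step and the kernel moment computations on the manifold via Assumption~\ref{asp:manifold} are the other technical pieces but are by now routine once this decomposition is in place.
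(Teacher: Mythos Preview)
Your proposal is correct and follows essentially the same route as the paper: your $T_2$ is the paper's $\bar\tau(\mathbf X,x)-\tau(x)$, and after your summation swap the noise and adjustment parts of $T_1$ become exactly the paper's $F_n(x)$ and $B_n(x)-\hat B_n(x)$, which are then bounded via the same lemmas (inverse-kernel moments for the denominator, \eqref{asp:dr-propensity-2-single} or Assumption~\ref{asp:dr-outcome}\ref{asp:dr-outcome-3} for the noise, Assumption~\ref{asp:mse-discrepancy} or \ref{asp:mse-weight} for the adjustment). Two minor imprecisions: in the noise swap the coefficient of $U_\omega(X_i)$ is $K_{h,i}$ \emph{plus} $\sum_{j}K_{h,j}w_{j\leftarrow i}$, not minus; and your worry about translating the adjustment sum into ``index-symmetric form'' is unnecessary, since Assumption~\ref{asp:mse-discrepancy} is already stated with $w_{i\leftarrow j}$ and $K_{h,i}$ exactly as they arise (in the outcome-correct route, the relevant bound comes from Assumption~\ref{asp:wij} rather than \ref{asp:dr-outcome}\ref{asp:dr-outcome-3}).
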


Lastly, we consider the central limit theorem. 

\begin{assumption}  \phantomsection \label{asp:clt-phi} There exist some functions, $\phi_0(x)$ and $\phi_1(x)$, as well as $\epsilon>0$, such that for all points $x \in \supp(\zeta)$, the weights satisfy
\begin{equation*}
h^{d-m}\E \Big[\Big(\sum_{j:D_j=1-D_1} K_{h,j}w_{j\leftarrow 1} - K_{h,1}\Big(D_1 \phi_1(X_1) + (1-D_1) \phi_0(X_1)\Big) \Big)^2\Big] \lesssim h^{-m/2}\Big(h^{\epsilon} + \frac{1}{n}\Big).
\end{equation*}

Furthermore, assume that the following limit exists:
\begin{equation*}
 \lim_{n \to \infty } h^{-m/2} \E \Big[K^2\left(\lVert h^{-1/2}\left(X_1-x\right)\rVert\right)\Big[D_1\left(1+\phi_1\left(X_1\right)\right)+\left(1-D_1\right)\left(1+\phi_0\left(X_1\right)\right)\Big]^2 \sigma_{D_1}^2(X_1)\Big]=: \tilde{\Sigma}(x).
\end{equation*}
\end{assumption}

\begin{remark} We note that under Assumption~\ref{asp:dr-propensity}\ref{asp:dr-propensity-2},  $\phi_0(x)$ and $\phi_1(x)$  in Assumption~\ref{asp:clt-phi} take the following specific forms
\begin{equation*}
    \quad \phi_0(x) = \frac{e(x)}{1-e(x)} \quad \text{and}\quad \phi_1(x) = \frac{1-e(x)}{e(x)}.
\end{equation*}
For the doubly robust estimator, the propensity score function can be misspecified, in which case $\phi_0(x)$ and $\phi_1(x)$ can be general functions without the above specific forms. From a technical standpoint, a central limit theorem without closed forms for $ \phi_0(x)$ and $ \phi_1(x)$   allows for more flexible assumption on the smoothing weight. 
\end{remark}

\begin{theorem}[Central limit theorem of $\hat\tau_w(x)$]\label{thm:clt}
Suppose Assumptions~\ref{asp:causal}-\ref{asp:kernel}, \ref{asp:mse-mu}-\ref{asp:clt-sigma}, \ref{asp:permutation}-\ref{asp:wij}, \ref{asp:dr-outcome}\ref{asp:dr-outcome-2} hold with either Assumptions~\ref{asp:dr-propensity}\ref{asp:dr-propensity-2}, \ref{asp:mse-muhat}-\ref{asp:mse-discrepancy},  or Assumptions~\ref{asp:mse-weight}-\ref{asp:clt-phi} held. Furthermore, assume $nh^{m/2+1}\to 0$. We then have, for any interior point $x \in \supp(\zeta)$,
    \begin{align*}
        \sqrt{nh^{m/2}} \left(\hat\tau_w\left(x\right)  - \tau\left(x\right) \right) \stackrel{\sf d}{\longrightarrow} N(0,\Sigma(x)).
    \end{align*}
Here under Assumption~\ref{asp:dr-propensity}\ref{asp:dr-propensity-2}, 
$$
\Sigma(x)=\frac{1}{c_K^2 f(x)}\left(\frac{\sigma^2_1(x)}{e(x)} + \frac{\sigma^2_0(x)}{1-e(x)}\right)\int_{\mathbb{R}^m} K^2(\lVert t\rVert) \d t;
$$
and under Assumption~\ref{asp:clt-phi},
$$
\Sigma(x)=\frac{\tilde{\Sigma}(x)}{c_K^2 f^2(x)},
$$
which, if $\phi_0(x)$ and $\phi_1(x)$ are Lipchitz and bounded and bounded away from zero, is 
$$
\Sigma(x)=\frac{1}{c_K^2 f(x)}\Big( e(x)\left(1+\phi_1(x)\right)^2\sigma^2_1(x) + \left(1-e(x)\right)\left(1+\phi_0(x)\right)^2\sigma^2_0(x)\Big)\int_{\mathbb{R}^m} K^2(\lVert t\rVert) \d t.
$$
\end{theorem}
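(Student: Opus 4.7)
The plan is to decompose $\hat\tau_w(x)-\tau(x) = N_n/D_n$ with $D_n := \sum_i K_h(X_i-x)$ and $N_n := \sum_i K_h(X_i-x)[\hat Y_i(1)-\hat Y_i(0)-\tau(x)]$, and to reduce $N_n$, after subtracting a negligible bias, to a sum of i.i.d.\ mean-zero summands to which Lindeberg-Feller applies. The denominator is handled by a kernel-density calculation on $\cM$: using a local chart at $x$ and the change-of-variables between $\cH^m$ and Lebesgue measure, $\E[K_h(X_1-x)] = h^{(m-d)/2}c_K f(x)(1+o(1))$, and so $D_n/(nh^{(m-d)/2}) \stackrel{\sf p}{\longrightarrow} c_K f(x)$ by the law of large numbers, which identifies $\sqrt{nh^{m/2}}$ as the correct scaling.

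For the numerator, split $N_n = B_n + V_n$ with the bias $B_n := \sum_i K_{h,i}(\tau(X_i)-\tau(x))$. The Lipschitz property of $\mu_0,\mu_1$ in Assumption~\ref{asp:mse-mu} together with the $O(h^{1/2})$-diameter of $\supp(K_h)$ yield $|B_n| \lesssim h^{1/2} D_n$, so $\sqrt{nh^{m/2}}\,B_n/D_n = O(\sqrt{nh^{m/2+1}}) = o(1)$ by hypothesis. For $V_n$, using $\sum_{j:D_j=1-D_i}w_{i\leftarrow j}=1$ (Assumption~\ref{asp:permutation}\ref{asp:permutation-2}) to add and subtract $\mu_\omega$'s, and then the symmetry in Assumption~\ref{asp:permutation}\ref{asp:permutation-1} to swap summation indices in the cross-imputation terms, one arrives at
\begin{align*}
V_n = \sum_{i:D_i=1}U_1(X_i)\Big(K_{h,i}+\!\!\sum_{j:D_j=0}\!K_{h,j}w_{j\leftarrow i}\Big) - \sum_{i:D_i=0}U_0(X_i)\Big(K_{h,i}+\!\!\sum_{j:D_j=1}\!K_{h,j}w_{j\leftarrow i}\Big) + R_n,
\end{align*}
where $R_n$ collects the $\hat\mu_\omega - \mu_\omega$ discrepancies and is $o_{\mathrm P}$ at the CLT scale --- by Assumption~\ref{asp:mse-discrepancy} (eased to $L^1$ per the remark following it) combined with Assumption~\ref{asp:mse-muhat} in the propensity branch, and by Assumption~\ref{asp:mse-weight} in the outcome branch.

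Next, invoking Assumption~\ref{asp:dr-propensity}\ref{asp:dr-propensity-2} (respectively, Assumption~\ref{asp:clt-phi}), I would replace each inner sum $\sum_{j:D_j=1-D_i}K_{h,j}w_{j\leftarrow i}$ by $K_{h,i}\phi_{D_i}(X_i)$, with $(\phi_0,\phi_1) = (e/(1-e),(1-e)/e)$ in the propensity branch and with the general $\phi_\omega$ otherwise. Conditional on $(X_i,D_i)_{i=1}^n$, the $U_\omega$'s are mean-zero and uniformly bounded (Assumption~\ref{asp:causal}), so the per-pair $L^2$ slack $h^{-m/2}(h^\epsilon+1/n)$ aggregates by Chebyshev into an error of order $o_{\mathrm P}(\sqrt n\,h^{m/4-d/2})$, precisely $o_{\mathrm P}$ at the CLT scale. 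Thus $V_n$ agrees up to $o_{\mathrm P}$ with the i.i.d.\ sum
\begin{align*}
\tilde V_n := \sum_{i=1}^n K_{h,i}\Big[D_i(1+\phi_1(X_i))U_1(X_i)-(1-D_i)(1+\phi_0(X_i))U_0(X_i)\Big],
\end{align*}
whose summands are mean-zero by unconfoundedness. A manifold change-of-variables computation, analogous to the denominator one but with integrand $K^2$ and conditional-variance factor $\sigma^2_{D_1}(X_1)(1+\phi_{D_1}(X_1))^2$, gives $\mathrm{Var}(\tilde V_n)\sim n h^{m/2-d}\tilde\Sigma(x)$; Lindeberg is automatic since each summand is bounded by $C h^{-d/2}$ while $\sqrt{\mathrm{Var}(\tilde V_n)}\asymp\sqrt n\,h^{m/4-d/2}$ diverges under $nh^{m/2}\to\infty$, so the truncation event is eventually empty. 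Slutsky combined with the LLN for $D_n$ yields the stated limit with variance $\tilde\Sigma(x)/(c_K^2 f^2(x))$, and the algebraic identities $1+(1-e)/e = 1/e$, $1+e/(1-e) = 1/(1-e)$ reduce this to the closed-form $\Sigma(x)$ of the theorem in the propensity branch.

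The main obstacle is the swap-and-replace step: the control provided by Assumptions~\ref{asp:dr-propensity}\ref{asp:dr-propensity-2} and~\ref{asp:clt-phi} is tight at the CLT scale, so one must propagate the $L^2$ bound through the double sum carefully in order to verify that the resulting error is truly of smaller order than $\sqrt{nh^{m/2}}\cdot h^{(m-d)/2}$, i.e., the deterministic magnitude of $\sqrt{nh^{m/2}}\cdot D_n/n$. Everything else --- the bias bound, the Lindeberg verification, and the identification of the limit variance --- is a routine extension of the manifold kernel-density computation already used to normalize the denominator.
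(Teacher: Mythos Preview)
Your proposal is correct and follows essentially the same route as the paper. The paper's decomposition $\hat\tau_w(x)=\bar\tau(\mX,x)+E_n(x)+\tilde E_n(x)+B_n(x)-\hat B_n(x)$ matches yours term-by-term (your bias $B_n$ is their $\bar\tau-\tau$, your $\tilde V_n$ is their $E_n$, your replacement error is their $\tilde E_n$, your $R_n$ is their $B_n-\hat B_n$), and the paper handles each piece via the same mechanisms you describe: Lipschitz plus undersmoothing for the bias, conditional mean-zero plus the per-unit $L^2$ slack of Assumption~\ref{asp:dr-propensity}\ref{asp:dr-propensity-2}/\ref{asp:clt-phi} for the replacement error, Assumptions~\ref{asp:mse-muhat}--\ref{asp:mse-discrepancy} or \ref{asp:mse-weight} for $R_n$, and a triangular-array CLT (they use Lyapunov rather than your bounded-summand Lindeberg, a cosmetic difference) for the i.i.d.\ main term.
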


\section{Discussions}

In this study, we have demonstrated that causal forests can effectively adapt to unknown covariate manifold structures, under the (extremely) honesty assumption of trees in the forests. An interesting future direction is relaxing the honesty assumption and exploring data-dependent splits. We have considered fixed dimensions for the ambient space and manifold. Recent research \citep{scornet2015consistency,chi2022asymptotic} has shown that random forests \citep{breiman2001random} can adapt to model sparsity and high dimensionality, sparking interest in investigating a similar adaptability aspect in CATE estimation using random forests. This avenue of exploration holds potential for enhancing CATE estimation methodologies. 


\section*{Ackowledgement}

The authors thank Vasilis Syrgkanis for pointing out related research, and Jason M. Klusowski for helpful comments on the theory of random forests.

\appendix

\section{Key lemmas}\label{sec:key-lemma}

Starting from the appendix, without loss of generality, we are focused on a special chart that is the tangent space $T_x\cM$ combined with its projection if not specified.

In the following we put some auxiliary lemmas that we will use later in the proofs.


\begin{lemma}[Lemma 2.2, Chapter 7 in \cite{stein2009real}] \label{lemma:gbound}
Assume Assumptions~\ref{asp:manifold}\ref{asp:manifold-dim}-\ref{asp:manifold-compact}. The following two statements are equivalent:
\begin{enumerate}
    \item Assumption~\ref{asp:manifold}\ref{asp:manifold-f}.
    \item Write $g_x=\mathrm{d}\left(\psi_* \zeta\right) / \mathrm{d} \lambda$ to be the Radon-Nikodym derivative of $\psi_* \zeta_U$ with respect to $\lambda$. For any $x \in \cM$ and any chart $(U, \psi)$ defined above, $g_x$ is bounded and bounded away from zero.
\end{enumerate}
\end{lemma}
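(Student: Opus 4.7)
The plan is to invoke the area formula from geometric measure theory to relate the restricted Hausdorff measure $\mathcal{H}^m$ on $\cM$ to the Lebesgue measure $\lambda$ on a chart image $V=\psi(U)\subset\R^m$. Since $\cM$ is a $C^\infty$ submanifold of $\R^d$, the chart map $\psi$ (and its inverse) is smooth, and in particular $\psi^{-1}$ is locally Lipschitz on $V$. The area formula (as in \cite{gariepy2015measure}) then yields, for every Borel $A\subset U$,
\[
\mathcal{H}^m(A)=\int_{\psi(A)} J_\psi(y)\,\d\lambda(y),
\]
where $J_\psi(y):=\sqrt{\det\bigl(D(\psi^{-1})(y)^\top D(\psi^{-1})(y)\bigr)}$ is the $m$-dimensional Jacobian of $\psi^{-1}$; equivalently, the Riemannian volume element $\sqrt{\det g_{ij}(y)}$ of the pullback metric induced by the ambient embedding.

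With this in hand, I would compute the Radon-Nikodym derivative of $\psi_*\zeta_U$ with respect to $\lambda$ directly. For any Borel $B\subset V$, change of variables through the area formula gives
\[
(\psi_*\zeta_U)(B)=\zeta(\psi^{-1}(B))=\int_{\psi^{-1}(B)} f(z)\,\d\mathcal{H}^m(z)=\int_B f(\psi^{-1}(y))\,J_\psi(y)\,\d\lambda(y),
\]
so that $g_x(y)=f(\psi^{-1}(y))\cdot J_\psi(y)$ almost everywhere in $V$. Consequently the equivalence in the lemma is reduced to showing that $J_\psi$ is bounded above and bounded away from zero on the relevant portion of $V$: given such bounds, $f$ is bounded above and bounded away from zero on $\supp(\zeta)\cap U$ if and only if $g_x$ is.

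For the Jacobian bounds, I would argue that since $\psi:U\to V$ is a $C^\infty$ diffeomorphism, $D(\psi^{-1})(y)$ has full column rank $m$ for every $y\in V$, and hence $J_\psi$ is continuous and strictly positive on $V$. The compactness of $\supp(\zeta)$ in Assumption~\ref{asp:manifold}\ref{asp:manifold-compact} allows one to cover $\supp(\zeta)$ by finitely many such charts and, within each chart, restrict attention to a compact subset of $V$, on which the continuous positive function $J_\psi$ attains a strictly positive minimum and a finite maximum. Combining this with the factorization $g_x=(f\circ\psi^{-1})\cdot J_\psi$ yields the two-sided equivalence claimed.

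The main obstacle is not conceptual but is a matter of bookkeeping: verifying that the area formula is applicable in our setting (which follows from smoothness of $\cM$ and local Lipschitzness of $\psi^{-1}$, both consequences of the $C^\infty$ manifold hypothesis) and ensuring that the chart-dependent bounds on $J_\psi$ can be taken uniformly over $\supp(\zeta)$ via the finite cover described above. Once these technical checks are made, the factorization identity for $g_x$ delivers the equivalence in one line.
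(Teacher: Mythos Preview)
The paper does not supply its own proof of this lemma: it is stated with a citation to \cite{stein2009real} (Lemma~2.2, Chapter~7) and then used as a black box throughout the appendix. So there is no in-paper argument to compare against, only the referenced result.

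Your proposal is correct and is, in substance, exactly what the Stein--Shakarchi reference (or the equivalent area-formula statements in \cite{gariepy2015measure}) delivers: the identity $g_x(y)=f(\psi^{-1}(y))\,J_\psi(y)$ via the area formula, together with the observation that $J_\psi$ is continuous and strictly positive on chart domains and hence two-sided bounded on compact subsets. The finite-cover argument from compactness of $\supp(\zeta)$ is the standard way to upgrade these local bounds, and is precisely the step that makes the equivalence hold \emph{uniformly} rather than merely pointwise. In short, you have unpacked the cited lemma rather than taken a different route; the paper simply chose to invoke it by reference. One small remark: since the paper later restricts attention to the particular chart given by orthogonal projection onto $T_x\cM$ (for which $J_\psi(\psi(x))=1$ and $J_\psi$ is smooth near $\psi(x)$), the Jacobian bounds are in fact immediate in the only case where the lemma is actually applied, so your more general chart-by-chart compactness argument, while correct, is more than what is strictly needed downstream.
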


\begin{lemma} \label{lemma:fx_gphi}
Under Assumption~\ref{asp:manifold}, we have, for any interior point $x\in\supp(\zeta)$,  $f(x)=g_x(\psi(x))$.
\end{lemma}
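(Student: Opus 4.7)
The plan is to use the area formula to relate the $m$-dimensional Hausdorff measure on $\cM$ to the Lebesgue measure on the chart, and then to exploit the geometric fact that, for the specific chart given by the orthogonal projection onto $T_x\cM$ (the convention fixed at the start of the appendix), the Jacobian evaluates to exactly $1$ at $x$ itself. This pins $\sqrt{\det G}$ down to $1$ at the single point where the claim is made, eliminating any metric distortion between $f$ and $g_x$ at $x$.

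Concretely, I would fix an interior point $x\in\supp(\zeta)$ and take $(U,\psi)$ with $\psi$ equal to the orthogonal projection of a small neighborhood $U$ of $x$ onto $T_x\cM\cong\R^m$. The area formula for $C^{\infty}$ submanifolds of $\R^d$ gives, for every Borel $B\subset V:=\psi(U)$,
\[
  \cH^m\big(U\cap\psi^{-1}(B)\big)=\int_B \sqrt{\det G(\psi^{-1}(u))}\,\d u,
\]
where $G(y)$ is the Riemannian metric tensor induced at $y\in U$ from the Euclidean inner product via the parametrization $\psi^{-1}$. Chaining this with the defining relations of $f$ and $g_x$,
\[
  \int_B g_x(u)\,\d u=(\psi_*\zeta_U)(B)=\zeta\big(U\cap\psi^{-1}(B)\big)=\int_B f(\psi^{-1}(u))\sqrt{\det G(\psi^{-1}(u))}\,\d u,
\]
and this holding for every Borel $B\subset V$ forces the pointwise identity $g_x(u)=f(\psi^{-1}(u))\sqrt{\det G(\psi^{-1}(u))}$ for $\lambda$-a.e.\ $u\in V$.

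Finally, at the specific point $u=\psi(x)$ the differential $(\d\psi)_x$ is, by construction, the identity on $T_x\cM$, so $G(x)$ read in the standard orthonormal basis of $T_x\cM$ equals the identity matrix and $\sqrt{\det G(x)}=1$; the displayed identity then collapses to $g_x(\psi(x))=f(x)$. The only mildly delicate step I anticipate is upgrading this from an a.e.\ statement to a genuine pointwise statement at $x$; this is resolved by writing $g_x=\Pr(D{=}1)g_{1,x}+\Pr(D{=}0)g_{0,x}$ and invoking Assumption~\ref{asp:manifold}\ref{asp:conditional} together with the overlap part of Assumption~\ref{asp:causal}\ref{asp:causal-2} to conclude that $g_x$ is locally Lipschitz, hence continuous, so it admits a canonical pointwise value at $\psi(x)$ which we adopt as the pointwise representative of $f$ at $x$. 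Everything else is a one-line appeal to the area formula together with the geometric peculiarity of the tangent-space projection chart.
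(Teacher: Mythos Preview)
Your argument is correct and takes a genuinely different route from the paper. The paper first writes $\int_A f\,\d\cH^m=\int_{\psi(A)}g_x\,\d\lambda$, then applies Lebesgue differentiation to both $f$ (against $\cH^m$) and $g_x$ (against $\lambda$) over shrinking balls, and finally reduces the claim to showing $\lim_{r\to 0}\lambda(\psi(B(x,r)\cap\supp(\zeta)))/\cH^m(B(x,r)\cap\supp(\zeta))=1$, for which it quotes an external lemma from \cite{han2022azadkia}. You instead invoke the area formula to identify the Radon--Nikodym derivative $\d\cH^m/\d\lambda$ in the chart as $\sqrt{\det G}$, and then observe that for the tangent-projection chart $(\d\psi)_x=\mathrm{id}_{T_x\cM}$, so $\sqrt{\det G}(x)=1$. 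Your approach is more self-contained---it computes directly the very limit the paper outsources---and makes explicit the version issue that the paper leaves implicit under ``continuous point''. One small over-citation: you do not need the overlap portion of Assumption~\ref{asp:causal}\ref{asp:causal-2}; the mixture identity $g_x=\P(D{=}1)\,g_{1,x}+\P(D{=}0)\,g_{0,x}$ holds by the law of total probability, and local Lipschitzness of $g_{1,x},g_{0,x}$ from Assumption~\ref{asp:manifold}\ref{asp:conditional} alone already forces $g_x$ to be locally Lipschitz.
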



\begin{lemma}\label{lemma:thetafunc}
    For any $x \in \cM$, let $\theta_x \left(x_i\right)$ be the angle between the vector $x_i-x$ and its projection onto $T_x\cM$. Then there exist a $U_x \subset \cM$ that includes $x$ and a constant $c_x>0$, such that for every $x_i \in U_x$, 
    $$
    \theta_x \left(x_i\right) \leq c_x \lVert x_i-x \rVert.
    $$
\end{lemma}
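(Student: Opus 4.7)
The plan is to exploit the fact that a smooth submanifold of $\mathbb{R}^d$ is locally the graph of a smooth function over its tangent space whose first derivative vanishes at the base point. Once that representation is in hand, the angle bound reduces to a Taylor estimate.

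First, let $\pi: \mathbb{R}^d \to T_x\cM$ denote orthogonal projection onto the tangent space. Since $\cM$ is $C^\infty$ and $\pi|_\cM$ has differential at $x$ equal to the identity on $T_x\cM$, the inverse function theorem yields an open neighborhood $U_x \subset \cM$ of $x$, an open neighborhood $V \subset T_x\cM$ of $0$, and a smooth map $f: V \to T_x\cM^\perp$ with $f(0)=0$ and $df|_0 = 0$, such that every point of $U_x$ is uniquely expressed as $x_i = x + v + f(v)$ with $v \in V$. The condition $df|_0 = 0$ encodes that $\cM$ is tangent to $T_x\cM$ at $x$, which is where the smoothness hypothesis in Assumption~\ref{asp:manifold}\ref{asp:manifold-dim} is used.

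Next, by Taylor's theorem applied to the smooth function $f$, after shrinking $V$ if necessary there exists a constant $C_x > 0$ such that $\|f(v)\| \leq C_x \|v\|^2$ for all $v \in V$. Now fix $x_i = x + v + f(v) \in U_x$. Since $v \in T_x\cM$ and $f(v) \in T_x\cM^\perp$, the projection of $x_i - x$ onto $T_x\cM$ is exactly $v$, and the Pythagorean identity gives $\|x_i - x\|^2 = \|v\|^2 + \|f(v)\|^2$. In particular $\|v\| \leq \|x_i - x\|$, so
\[
\sin \theta_x(x_i) \;=\; \frac{\|f(v)\|}{\|x_i-x\|} \;\leq\; \frac{C_x \|v\|^2}{\|x_i-x\|} \;\leq\; C_x \|x_i - x\|.
\]

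Finally, shrinking $U_x$ once more so that $\theta_x(x_i) \in [0,\pi/2]$ on $U_x$ (which is possible since $\sin\theta_x(x_i) \to 0$ as $x_i \to x$), the elementary inequality $\theta \leq \tfrac{\pi}{2}\sin\theta$ for $\theta \in [0,\pi/2]$ gives $\theta_x(x_i) \leq c_x \|x_i - x\|$ with $c_x := \tfrac{\pi}{2}C_x$.

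The only mildly nontrivial step is the graph representation in the first paragraph; once that is established, the rest is a direct Taylor-expansion calculation. I expect no substantial obstacle.
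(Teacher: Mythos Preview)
Your proof is correct. The paper itself does not give a self-contained argument but simply cites Lemma~3.1 of \cite{han2022azadkia} together with the observation that a continuously differentiable map is locally Lipschitz; your graph-over-tangent-space representation plus Taylor estimate is exactly the standard mechanism underlying that cited result, so the approaches coincide, with yours written out explicitly.
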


\begin{lemma} \label{lemma:manifold}
    Assume Assumption~\ref{asp:manifold}, and let $\psi$ be the orthogonal projection onto the tangent plane $T_x\cM$. Consider any function $Q: \bR \to \bR^{\ge 0}$ that satisfies
    \begin{enumerate}[itemsep=-.5ex,label=(\roman*)]        
        \item \label{asp-lemma:manifold-1} $Q$ is Lipchitz on its support, $\supp(Q)$, and $\supp(Q)$ is bounded;

        \item \label{asp-lemma:manifold-2} $Q$ is bounded;

        \item \label{asp-lemma:manifold-3}There exists a constant $\delta_0>0$, such that when $0<h\leq \delta_0$, for any $z \in V$, there exist $0 < c_z \leq C_z < \infty$ such that 
        $$
        c_z \leq h^{-m/2}\int_{B \left(z, h^{1/2}d_{Q}\right)} Q\left( h^{-1/2} \lVert z_i-z\rVert \right) g_x\left(z_i\right) \d\lambda(z_i) \leq C_z,
        $$
        where $c_z$ and $C_z$ are independent of $h$, and $B(x,r)$ represents the ball with center $x$ and radius $r$ (measured under the Euclidean norm).

    \end{enumerate}
    Then for any interior point $x \in \supp(\zeta)$,
    \begin{equation} \label{eq:lemma-manifold-result}
        \Big\lvert 1- \frac{\int_{U} Q\left( h^{-1/2} \lVert x_i-x\rVert \right) \d \zeta(x_i)}{\int_{V} Q\left( h^{-1/2} \lVert z_i-\psi(x)\rVert \right) g_x\left(z_i\right) \d\lambda(z_i)} \Big\rvert  \leq c_x h^{1/2}d_Q,
    \end{equation}
    where $c_x$ is independent of $h$.
\end{lemma}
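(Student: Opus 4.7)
The plan is to push both integrals into the chart $V\subset \R^m$ via $\psi$, so they are integrated against the same density $g_x\,\d\lambda$, and then to bound the pointwise discrepancy of the integrands using the Lipschitz property of $Q$ together with the geometric fact that the orthogonal projection only distorts Euclidean distances by a higher-order term. Concretely, writing the numerator by pushforward as
\[
N:=\int_V Q\bigl(h^{-1/2}\|\psi^{-1}(z)-x\|\bigr)\,g_x(z)\,\d\lambda(z),
\]
and the denominator as $D:=\int_V Q(h^{-1/2}\|z-\psi(x)\|)\,g_x(z)\,\d\lambda(z)$, the task reduces to controlling $|N-D|/D$.

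The core geometric step compares $\|x_i-x\|$ and $\|z-\psi(x)\|$ where $x_i=\psi^{-1}(z)\in\cM$. Since $\psi$ is the orthogonal projection onto $T_x\cM$ (which contains $x$), the decomposition $x_i-x=(z-\psi(x))+n$ with $n\perp T_x\cM$ and Pythagoras give $\|x_i-x\|\geq \|z-\psi(x)\|$. Invoking Lemma~\ref{lemma:thetafunc}, the angle $\theta_x(x_i)$ between $x_i-x$ and its projection satisfies $\theta_x(x_i)\leq c_x\|x_i-x\|$ on a neighborhood of $x$, and since $\cos\theta_x(x_i)=\|z-\psi(x)\|/\|x_i-x\|$, the inequality $1-\cos\theta\leq \theta^2/2$ yields the cubic estimate
\[
0\leq \|x_i-x\|-\|z-\psi(x)\|\leq \tfrac{c_x^2}{2}\,\|x_i-x\|^3.
\]

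Next, I would combine this with the Lipschitz property of $Q$ from assumption~\ref{asp-lemma:manifold-1}: for every $z\in V$,
\[
\bigl|Q(h^{-1/2}\|x_i-x\|)-Q(h^{-1/2}\|z-\psi(x)\|)\bigr|\leq L_Q\,h^{-1/2}\cdot \tfrac{c_x^2}{2}\|x_i-x\|^3.
\]
Because the projection is contractive, the union of the effective supports of the two integrands lies in $\{z:\|z-\psi(x)\|\leq h^{1/2}d_Q\}$, on which $\|x_i-x\|\lesssim h^{1/2}d_Q$ for $h$ small enough that $\psi$ is a bi-Lipschitz diffeomorphism. The Lebesgue measure of this ball is $\lesssim h^{m/2}d_Q^m$, and $g_x$ is bounded by assumption~\ref{asp-lemma:manifold-2} combined with Lemma~\ref{lemma:gbound}. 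Integrating yields $|N-D|\lesssim h^{-1/2}\cdot h^{3/2}d_Q^3\cdot h^{m/2}d_Q^m=h^{(m+2)/2}d_Q^{m+3}$.

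Finally, by assumption~\ref{asp-lemma:manifold-3} applied with $z=\psi(x)$ we have $D\gtrsim h^{m/2}$. Dividing gives $|1-N/D|\lesssim h\,d_Q^{m+3}$, which for all sufficiently small $h$ is bounded by $c_x h^{1/2}d_Q$, as required (with $c_x$ absorbing the relevant constants). The main obstacle is the geometric comparison step: a naive use of the Lipschitz bound would only give a linear-order error and, combined with the $h^{-1/2}$ rescaling from $Q$, would fail; the key improvement is the cubic-in-distance bound supplied by Lemma~\ref{lemma:thetafunc}, which provides enough decay to beat both $h^{-1/2}$ and, after integration, the volume factor $h^{m/2}$.
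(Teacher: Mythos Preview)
Your overall route matches the paper's: push the numerator into the chart, then compare integrands against the same density $g_x\,\d\lambda$. But there is a genuine gap in the Lipschitz step. You assert that for every $z\in V$,
\[
\bigl|Q(h^{-1/2}\|\psi^{-1}(z)-x\|)-Q(h^{-1/2}\|z-\psi(x)\|)\bigr|\leq L_Q\,h^{-1/2}\cdot \tfrac{c_x^2}{2}\|x_i-x\|^3,
\]
but assumption~\ref{asp-lemma:manifold-1} only gives Lipschitz control of $Q$ \emph{on its support}. Take $Q=\ind_{[0,d_Q]}$ (the box kernel, explicitly permitted in the paper): then $L_Q=0$ on $\supp(Q)$, and your display would force $N=D$, which is false. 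The failure occurs exactly on the annulus $\{z:\|z-\psi(x)\|\le h^{1/2}d_Q<\|\psi^{-1}(z)-x\|\}$, where one argument has left $\supp(Q)$ and the difference of $Q$-values is $O(1)$, not $O(L_Q\,h^{-1/2}\cdot h^{3/2})$.

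The paper handles this by splitting the comparison into two pieces: (a) on the common domain $\psi(B(x,h^{1/2}d_Q)\cap\cM)$, where both $h^{-1/2}\|x_i-x\|$ and $h^{-1/2}\|z_i-\psi(x)\|$ lie in $\supp(Q)$, the Lipschitz bound is legitimate (this is your argument, and your cubic estimate is in fact sharper than the paper's linear one); and (b) a domain-difference step, showing via the containment
\[
B\bigl(\psi(x),(1-c'_xh^{1/2}d_Q)h^{1/2}d_Q\bigr)\subset \psi\bigl(B(x,h^{1/2}d_Q)\cap\cM\bigr)\subset B\bigl(\psi(x),h^{1/2}d_Q\bigr)
\]
that the leftover annulus has small $\lambda$-measure, and bounding the integral there by $\|Q\|_\infty$ times that measure. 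You need to add this second step; once you do (and your cubic bound even shrinks the annulus further than the paper's), the argument closes.
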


\begin{lemma} \label{lemma:manifoldfunc}
    Under the same Assumptions as Lemma~\ref{lemma:manifold}, if we further have a bounded continuous function $\phi: \cM \to \mathbb{R}$, then for any interior point $x \in \supp(\zeta)$,

    \begin{enumerate}[itemsep=-.5ex,label=(\roman*)]
    \item when $\phi(x)>0$, we have
    \begin{equation} \label{lemma:manifoldfunc-1}
    \lim _{\substack{h \rightarrow 0}} \frac{\int_{U} Q\left( h^{-1/2} \lVert x_i-x\rVert \right) \phi(x_i) \d \zeta(x_i)}{\phi(x)\int_{V} Q\left( h^{-1/2} \lVert z_i-\psi(x)\rVert \right) g_x\left(z_i\right) \d\lambda(z_i)} =1, 
    \end{equation}
    and when $\phi(x)=0$,
    \begin{equation} \label{lemma:manifoldfunc-2}
    \lim _{\substack{h \rightarrow 0}} h^{-m/2}\int_{U} Q\left( h^{-1/2} \lVert x_i-x\rVert \right) \phi(x_i) \d \zeta(x_i) = 0.
    \end{equation}

    \item If we further assume $\phi$ to be (locally) Lipschitz, then when $\phi(x)>0$, we have
    \begin{equation} \label{lemma:manifoldfunc-1-h}
    \Big\lvert1- \frac{\int_{U} Q\left( h^{-1/2} \lVert x_i-x\rVert \right) \phi(x_i) \d \zeta(x_i)}{\phi(x)\int_{V} Q\left( h^{-1/2} \lVert z_i-\psi(x)\rVert \right) g_x\left(z_i\right) \d\lambda(z_i)}\Big\rvert \lesssim h^{1/2}d_Q,
    \end{equation}
    and when $\phi(x)=0$,
    \begin{equation} \label{lemma:manifoldfunc-2-h}
    \Big\lvert h^{-m/2}\int_{U} Q\left( h^{-1/2} \lVert x_i-x\rVert \right) \phi(x_i) \d \zeta(x_i) \Big\rvert \lesssim h^{1/2}d_Q,
    \end{equation}
    as $h\to 0$.
    \end{enumerate}
\end{lemma}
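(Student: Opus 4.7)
The plan is to exploit the fact that the kernel $Q(h^{-1/2}\|x_i-x\|)$ forces $x_i$ to live in a ball of radius at most $h^{1/2}d_Q$ around $x$, so that $\phi(x_i)$ is close to $\phi(x)$ on the effective integration region. I would then reduce everything to Lemma~\ref{lemma:manifold} by the decomposition
\[
\phi(x_i) = \phi(x) + \bigl(\phi(x_i)-\phi(x)\bigr),
\]
splitting the numerator into a ``main term'' (in which $\phi(x)$ factors out) and a ``remainder term'' that must be shown to be negligible in the appropriate sense.

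First, for part (i), applying Lemma~\ref{lemma:manifold} to the main term gives
\[
\int_U Q\!\left(h^{-1/2}\|x_i-x\|\right) \phi(x) \,\d\zeta(x_i)
= \phi(x)\!\int_V Q\!\left(h^{-1/2}\|z_i-\psi(x)\|\right) g_x(z_i)\,\d\lambda(z_i)\,(1+o(1)),
\]
so it remains to bound the remainder $R_h := \int_U Q(h^{-1/2}\|x_i-x\|)(\phi(x_i)-\phi(x))\,\d\zeta(x_i)$. Because $\supp Q$ has diameter $d_Q$, the integrand vanishes outside $B(x,h^{1/2}d_Q)$, and by continuity of $\phi$ at $x$ we have $\sup_{x_i\in B(x,h^{1/2}d_Q)}|\phi(x_i)-\phi(x)|=:\epsilon_h\to 0$. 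Hence $|R_h|\le \epsilon_h\int_U Q(\cdot)\,\d\zeta(x_i)$, and dividing by $\phi(x)\int_V Q(\cdot) g_x\,\d\lambda$ (which is $\asymp h^{m/2}$ by hypothesis~\ref{asp-lemma:manifold-3} of Lemma~\ref{lemma:manifold}) yields the $o(1)$ bound of \eqref{lemma:manifoldfunc-1}. For \eqref{lemma:manifoldfunc-2}, when $\phi(x)=0$ the main term vanishes and the same continuity bound gives $h^{-m/2}|R_h|\le \epsilon_h\cdot h^{-m/2}\int_U Q(\cdot)\,\d\zeta(x_i)=O(\epsilon_h)=o(1)$ using Lemma~\ref{lemma:manifold} together with condition~\ref{asp-lemma:manifold-3}.

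For part (ii), with $\phi$ (locally) Lipschitz, the uniform modulus $\epsilon_h$ above improves to $\epsilon_h\le L\,h^{1/2}d_Q$ on $B(x,h^{1/2}d_Q)$, where $L$ is the local Lipschitz constant. Substituting this sharper bound into the remainder estimate gives $|R_h|\lesssim h^{1/2}d_Q\cdot h^{m/2}$, and combining with the explicit rate in \eqref{eq:lemma-manifold-result} of Lemma~\ref{lemma:manifold} for the main term yields \eqref{lemma:manifoldfunc-1-h}. Likewise, when $\phi(x)=0$, the Lipschitz bound gives $h^{-m/2}|R_h|\lesssim h^{1/2}d_Q$, which is \eqref{lemma:manifoldfunc-2-h}.

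The main obstacle I anticipate is purely bookkeeping rather than conceptual: ensuring that the denominator $\int_V Q(\cdot)g_x\,\d\lambda$ is indeed of order $h^{m/2}$ and bounded away from zero uniformly in $h$ on a small enough neighborhood (this is exactly what hypothesis~\ref{asp-lemma:manifold-3} and Lemma~\ref{lemma:gbound} together provide), and that the $O(h^{1/2}d_Q)$ rate absorbed from Lemma~\ref{lemma:manifold} does not conflict with the $O(h^{1/2}d_Q)$ Lipschitz contribution from the remainder term. Both can be handled by taking $h$ small enough that the coordinate chart $\psi$ is a bi-Lipschitz diffeomorphism onto its image, so that Euclidean distances in $U\subset\cM$ and in $V\subset\mathbb{R}^m$ are comparable up to constants depending only on $x$.
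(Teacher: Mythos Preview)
Your proposal is correct and matches the paper's proof essentially line for line: the paper also splits off $\phi(x)$ from $\phi(x_i)$, bounds the remainder $\int Q(\cdot)(\phi(x_i)-\phi(x))\,\d\zeta$ by the modulus of continuity (or the Lipschitz constant) times $\int Q(\cdot)\,\d\zeta$, and then invokes Lemma~\ref{lemma:manifold} and hypothesis~\ref{asp-lemma:manifold-3} for the main term and the $h^{m/2}$ scaling. The only cosmetic difference is that the paper states the continuity bound in $\varepsilon$--$\delta$ form rather than via a uniform modulus $\epsilon_h$, and it does not explicitly invoke the bi-Lipschitz chart comparison you mention at the end (that comparison is already baked into Lemma~\ref{lemma:manifold}).
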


\section{Proofs in Section \ref{sec:framework}} \label{sec:pf-framework}

For simplicity, we denote 
$\mX:=[X_i]_{i=1}^n$ and $\mD:=[D_i]_{i=1}^n$.

\subsection{Proof of Theorem~\ref{thm:dr}}
\begin{proof}[Proof of Theorem~\ref{thm:dr}] 

{\bf Part I.} Assume the accuracy of the propensity score model, specifically, the validity of Assumption~\ref{asp:dr-propensity}. And we decompose $\hat\tau_w$ as the following five parts,
\begin{align*}
    \hat\tau_w  =  & \frac{1}{\sum_{i=1}^n K_{h, i}} \sum_{i=1}^n K_{h, i}\left[\hat{\mu}_1\left(X_i\right)-\bar{\mu}_1\left(X_i\right)\right] -\frac{1}{\sum_{i=1}^n K_{h, i}} \sum_{i=1}^n K_{h, i}\left[\hat{\mu}_0\left(X_i\right)-\bar{\mu}_0\left(X_i\right)\right] \\
    & +\frac{1}{\sum_{i=1}^n K_{h, i}}\Big[\sum_{i=1}^n D_i\Big(K_{h, i}+\sum_{D_j=1-D_i} K_{h, j} w_{j \leftarrow i} \Big)\left(\bar{\mu}_1\left(X_i\right)-\hat{\mu}_1\left(X_i\right)\right) \\
    & \quad -\sum_{i=1}^n\left(1-D_i\right)\Big(K_{h, i}+\sum_{D_j=1-D_i} K_{h,j} w_{j \leftarrow i} \Big)\left(\bar{\mu}_0\left(X_i\right)-\hat{\mu}_0\left(X_i\right)\right)\Big] \\
    & +\frac{1}{\sum_{i=1}^n K_{h, i}}\Big[\sum_{i=1}^n D_i\Big(K_{h, i}+\sum_{D j=1-D_i} K_{h,j} w_{j \leftarrow i}-\frac{K_{h, i}}{e\left(X_i\right)}\Big) \Big(Y_i - \bar{\mu}_{D_i}(X_i)\Big) \\
    & \quad \quad -\sum_{i=1}^n\left(1-D_i\right)\Big(K_{h, i}+\sum_{D j=1-D_i} K_{h,j} w_{j \leftarrow i}-\frac{K_{h, i}}{1 - e\left(X_i\right)}\Big) \Big(Y_i - \bar{\mu}_{D_i}(X_i)\Big)\Big] \\
    & +\frac{1}{\sum_{i=1}^n K_{h, i}}\Big[\sum_{i=1}^n K_{h, i}\Big(1-\frac{D_i}{e\left(X_i\right)}\Big) \bar{\mu}_1\left(X_i\right) - \sum_{i=1}^n K_{h, i}\Big(1-\frac{1-D_i}{1-e\left(X_i\right)}\Big) \bar{\mu}_0\left(X_i\right)\Big] \\
    & +\frac{1}{\sum_{i=1}^n K_{h, i}}\Big[\sum_{i=1}^n K_{h, i} \frac{D_i}{e\left(X_i\right)} Y_i-\sum_{i=1}^n K_{h, i} \frac{1-D_i}{1-e\left(X_i\right)} Y_i\Big] . \label{eq:thm-dr1-5part}
    \yestag
\end{align*}

Without loss of generality, we will only demonstrate the first half under treatment conditions. The second half under control conditions can be established using a similar approach.

For the first term in \eqref{eq:thm-dr1-5part}, according to Assumptions~\ref{asp:kernel} and \ref{asp:dr-propensity}\ref{asp:dr-propensity-1},
$$
    \Big|\frac{1}{\sum_{i=1}^n K_{h, i}} \sum_{i=1}^n K_{h, i}\left[\widehat{\mu}_1\left(X_i\right)-\bar{\mu}_1\left(X_i\right)\right]\Big| \leqslant \Big\lvert\frac{1}{\sum_{i=1}^n K_{h, i}} \sum_{i=1}^n K_{h, i}\Big\rvert \cdot \lVert \hat{\mu}_1 - \bar{\mu}_1 \rVert_\infty = o_\P(1).
$$
Therefore,
\begin{align}\label{eq:thm-dr1-p1:result}
    \frac{1}{\sum_{i=1}^n K_{h, i}} \sum_{i=1}^n K_{h, i}\left[\widehat{\mu}_1\left(X_i\right)-\bar{\mu}_1\left(X_i\right)\right]-\frac{1}{\sum_{i=1}^n K_{h, i}} \sum_{i=1}^n K_{h, i}\left[\widehat{\mu}_0\left(X_i\right)-\bar{\mu}_0\left(X_i\right)\right] = o_\P(1).
\end{align}


For the second term in \eqref{eq:thm-dr1-5part}, it is equivalent to considering the term
\begin{align*}
\frac{1}{h^{\frac{d-m}{2}} \sum_{i=1}^n K_{h, i}} h^{\frac{d-m}{2}} & \Big[\sum_{i=1}^n D_i\Big(K_{h, i}+\sum_{D_j=1-D_i} K_{h,j} w_{j \leftarrow i}\Big)\left(\bar{\mu}_1\left(X_i\right)-\widehat{\mu}_1\left(X_i\right)\right) \\
& - \sum_{i=1}^n\left(1-D_i\right)\Big(K_{h, i}+\sum_{D_j=1-D_i} K_{h,j} w_{j \leftarrow i}\Big)\left(\bar{\mu}_0\left(X_i\right)-\widehat{\mu}_0\left(X_i\right)\right)\Big].
\end{align*}
In the context of kernel density estimation, based on Lemma~\ref{lemma:manifold}, we obtain that
\begin{align*}
& \Big\lvert h^{-m/2} \E \Big[ K\left(\lVert h^{-1/2}\left(X_1-x\right)\rVert\right)\Big] - c_K g_x\left(\psi(x)\right)\Big\rvert\\
\leq & \frac{\int_U K\left( h^{-1/2} \lVert x_1-x\rVert \right) \d \zeta(x_1)}{\int_V K\left( h^{-1/2} \lVert z_1-\psi(x)\rVert \right) g_x\left(z_1\right) \d\lambda(z_1)} \Big\lvert \int K(\lVert t\rVert) g_x\left(\psi(x)+h^{1/2} t\right) \d t - \int K(\lVert t\rVert) g_x\left(\psi(x)\right) \d t\Big\rvert\\
& + \int K(\lVert t\rVert) g_x\left(\psi(x)\right) \d t\Big\rvert \Big\lvert \frac{\int_U K\left( h^{-1/2} \lVert x_1-x\rVert \right) \d \zeta(x_1)}{\int_V K\left( h^{-1/2} \lVert z_1-\psi(x)\rVert \right) g_x\left(z_1\right) \d\lambda(z_1)}-1 \Big\rvert \\
\lesssim & \int K(\lVert t\rVert) \Big\lvert g_x\left(\psi(x)+h^{1/2} t\right)- g_x\left(\psi(x)\right) \Big\rvert \d t + \Big\lvert \frac{\int_U K\left( h^{-1/2} \lVert x_1-x\rVert \right) \d \zeta(x_1)}{\int_V K\left( h^{-1/2} \lVert z_1-\psi(x)\rVert \right) g_x\left(z_1\right) \d\lambda(z_1)}-1 \Big\rvert \\
\lesssim & \int K(\lVert t\rVert) \lVert h^{1/2} t\rVert \d t + h^{1/2}d_K
\lesssim h^{1/2} \rightarrow 0,\yestag\label{eq:mse-kernelexp}
\end{align*}
where $c_K := \int K(\lVert t\rVert) \d t$.
Define $\varphi_i(x)=K\left(h^{-1/2} \lVert X_i-x\rVert\right)-\E\left[K\left(h^{-1/2} \lVert X_i-x\rVert\right)\right]$, and by Lemma~\ref{lemma:manifold},
\begin{equation}\label{eq:thm-dr:kernel2}
    \E\left[\varphi_1^2(x)\right]
\leqslant \E\left[K^2\left(h^{-1/2} \lVert X_1-x\rVert\right)\right]
\lesssim \int_V K^2\left(h^{-1/2}\lVert z_1-\psi(x)\rVert \right) g_x\left(z_1\right) \mathrm{d} \lambda\left(z_1\right)
\lesssim h^{m/2}.
\end{equation}
Therefore,
\begin{align*}
\E\Big[\Big(\frac{1}{n} \sum_{i=1}^n h^{\frac{d-m}{2}}K_{h, i}-\E[h^{\frac{d-m}{2}}K_{h, i}]\Big)^2\Big]
= \frac{1}{n h^m} \E\left[\varphi_1^2(x)\right] \lesssim \frac{1}{n h^{m/2}}. \yestag\label{eq:thm-dr:kernelvar}
\end{align*}
By combining Equations \eqref{eq:mse-kernelexp} and \eqref{eq:thm-dr:kernelvar} and applying Slutsky's theorem, we can conclude that
\begin{equation} \label{eq:thm-dr:kde}
    h^{\frac{d-m}{2}}  \frac{1}{n} \sum_{i=1}^n  K_{h, i} \stackrel{\sf p}{\longrightarrow} c_K g_x\left(\psi(x)\right) \quad \text{and} \quad \frac{1}{h^{\frac{d-m}{2}} \frac{1}{n} \sum_{i=1}^n K_{h, i}} \stackrel{\sf p}{\longrightarrow} \frac{1}{c_K g_x\left(\psi(x)\right)}.
\end{equation}
Thus, we only need to consider the following terms
\begin{align*} 
& \Big|h^{\frac{d-m}{2}} \frac{1}{n}\sum_{i=1}^n D_i\Big(K_{h, i}+\sum_{D_j=1-D_i} K_{h,j} w_{j \leftarrow i}\Big)\left(\bar{\mu}_1\left(X_i\right)-\widehat{\mu}_1\left(X_i\right)\right) \Big| \\
\leq & \Big| h^{\frac{d-m}{2}} \frac{1}{n} \sum_{i=1}^n D_i\Big(K_{h, i}+\sum_{D_j=1-D_i} K_{h,j} w_{j \leftarrow i}-\frac{K_{h,i}}{e\left(X_i\right)}\Big)\left(\bar{\mu}_1\left(X_i\right)-\hat{\mu}_1\left(X_i\right)\right)\Big| \\
& +\Big|h^{\frac{d-m}{2}} \frac{1}{n} \sum_{i=1}^n \frac{K_{h, i}}{e\left(X_i\right)} \left(\bar{\mu}_1\left(X_i\right)-\widehat{\mu}_1\left(X_i\right)\right)\Big|. \yestag\label{eq:thm-dr1-p2:2part}
\end{align*}
For the first term in \eqref{eq:thm-dr1-p2:2part}, by Assumption~\ref{asp:dr-propensity}, 
\begin{align*}
& \Big\lvert h^{\frac{d-m}{2}} \frac{1}{n} \sum_{i=1}^n D_i\Big(K_{h, i}+\sum_{D_j=1-D_i} K_{h,j} w_{j \leftarrow i}-\frac{K_{h,i}}{e\left(X_i\right)}\Big)\left(\bar{\mu}_1\left(X_i\right)-\hat{\mu}_1\left(X_i\right)\right)\Big\rvert \\
\le & \Big\lvert h^{\frac{d-m}{2}} \frac{1}{n} \sum_{i=1}^n D_i\Big(K_{h, i}+\sum_{D_j=1-D_i} K_{h,j} w_{j \leftarrow i}-\frac{K_{h,i}}{e\left(X_i\right)}\Big)\Big\rvert \lVert \widehat{\mu}_1-\bar{\mu}_1\rVert_{\infty}
= o_P(1). \yestag\label{eq:thm-dr1-p2:1term}
\end{align*}
For the second term in \eqref{eq:thm-dr1-p2:2part}, by Assumption~\ref{asp:causal}\ref{asp:causal-2} and \eqref{eq:mse-kernelexp}
\begin{align*}
& \Big| h^{\frac{d-m}{2}} \frac{1}{n} \sum_{i=1}^n \frac{K_{h, i}}{e\left(X_i\right)} \left(\bar{\mu}_1\left(X_i\right)-\widehat{\mu}_1\left(X_i\right)\right) \Big|
\leq \Big\rvert h^{\frac{d-m}{2}} \frac{1}{n} \sum_{i=1}^n \frac{K_{h, i}}{e\left(X_i\right)}\Big\rvert \lVert \widehat{\mu}_1-\bar{\mu}_1\rVert_\infty
= o_\P(1). \yestag\label{eq:thm-dr1-p2:2term}
\end{align*}
Hence, by merging \eqref{eq:thm-dr1-p2:1term} and \eqref{eq:thm-dr1-p2:2term} we have
\begin{align*}
\frac{1}{\sum_{i=1}^n K_{h, i}} & \Big[\sum_{i=1}^n D_i\Big(K_{h, i}+\sum_{D_j=1-D_i} K_{h,j} w_{j \leftarrow i}\Big)\left(\bar{\mu}_1\left(X_i\right)-\widehat{\mu}_1\left(X_i\right)\right)\\
& - \sum_{i=1}^n\left(1-D_i\right)\Big(K_{h, i}+\sum_{D_j=1-D_i} K_{h,j} w_{j \leftarrow i}\Big)\left(\bar{\mu}_0\left(X_i\right)-\widehat{\mu}_0\left(X_i\right)\right)\Big] =o_\P(1). \yestag\label{eq:thm-dr1-p2:result}
\end{align*} 


For the third term in \eqref{eq:thm-dr1-5part}, according to \eqref{eq:thm-dr:kde} and Hölder's inequality we only need to consider
\begin{align*}
    & \Big\lvert h^{\frac{d-m}{2}} \frac{1}{n}\sum_{i=1}^n D_i\Big(K_{h, i}+\sum_{D j=1-D_i} K_{h,j} w_{j \leftarrow i}-\frac{K_{h, i}}{e\left(X_i\right)}\Big)  \Big(Y_i - \bar{\mu}_{D_i}(X_i)\Big) \Big\rvert \\
    \leq & \Big\lvert h^{\frac{d-m}{2}} \frac{1}{n}\sum_{i=1}^n D_i\Big(K_{h, i}+\sum_{D j=1-D_i} K_{h,j} w_{j \leftarrow i}-\frac{K_{h, i}}{e\left(X_i\right)}\Big) \Big\rvert \lVert Y_i - \bar{\mu}_{D_i}(X_i)\rVert_\infty \\
    = & o_\P(1).
\end{align*}
Here we use the fact that 
$$
\lvert Y_i - \bar{\mu}_{D_i}(X_i)\rvert \leq  \lvert Y_i - \mu_{D_i}(X_i) \rvert + \lvert \mu_{D_i}(X_i) - \bar{\mu}_{D_i}(X_i)\rvert
$$
is universally bounded by Assumptions~\ref{asp:causal}\ref{asp:causal-3} and \ref{asp:dr-propensity}\ref{asp:dr-propensity-1}. Thus, we conclude that
\begin{align*}
    & \frac{1}{\sum_{i=1}^n K_{h, i}} \Big[\sum_{i=1}^n D_i\Big(K_{h, i}+\sum_{D j=1-D_i} K_{h,j} w_{j \leftarrow i}-\frac{K_{h, i}}{e\left(X_i\right)}\Big)  \Big(Y_i - \bar{\mu}_{D_i}(X_i)\Big) - \\
    & ~~~\sum_{i=1}^n\left(1-D_i\right)\Big(K_{h, i}+\sum_{D j=1-D_i} K_{h,j} w_{j \leftarrow i}-\frac{K_{h, i}}{1-e\left(X_i\right)}\Big)  \Big(Y_i - \bar{\mu}_{D_i}(X_i)\Big)\Big] = o_\P(1). \yestag \label{eq:thm-dr1-p3:result}
\end{align*}

For the fourth term in \eqref{eq:thm-dr1-5part}, according to Assumption~\ref{asp:dr-propensity}\ref{asp:dr-propensity-1},
$$
\E\Big[K_{h, 1}\Big(1-\frac{D_1}{e\left(X_1\right)}\Big) \bar{\mu}_1\left(X_1\right) \Big] = \E\Big[\E\Big[K_{h, 1}\Big(1-\frac{D_1}{e\left(X_1\right)}\Big) \bar{\mu}_1\left(X_1\right) \Biggiven X_1\Big]\Big]=0.
$$
Consider 
\[
\kappa_i(x):=K\left(h^{-1/2}\lVert X_i-x\rVert\right) \left(1-\frac{D_i}{e\left(X_i\right)}\right) \bar{\mu}_1\left(X_i\right). 
\]
By utilizing Lemma~\ref{lemma:manifold} and Assumption~\ref{asp:kernel}, it is known that $K$ remains bounded within its support. Furthermore, we can express
\begin{align*}
\E\left[\kappa_1^2(x)\right]
\lesssim \E\Big[K\left(h^{-1/2}\lVert X_1-x\rVert\right)\Big]
\lesssim \int_V K\left(h^{-1/2}\lVert z_1-\psi(x)\rVert\right) g_x\left(z_1\right) \mathrm{d} \lambda\left(z_1\right)
\lesssim h^{m/2}.
\end{align*}
Meanwhile, since $\{\kappa_i(x)\}_{i=1}^n$ are i.i.d., we obtain
\begin{align*}
\E\Big[\Big(h^{\frac{d-m}{2}} \frac{1}{n} \sum_{i=1}^n K_{h, i} \Big(1-\frac{D_i}{e\left(X_i\right)}\Big) \bar{\mu}_1\left(X_i\right)\Big)^2\Big]
\leq \E\Big[\Big(\frac{1}{nh^{m/2}} \sum_{i=1}^n \kappa_i(x)\Big)^2\Big]
\lesssim \frac{1}{nh^{m/2}}.
\end{align*}
Thus by \eqref{eq:thm-dr:kde}, we have
\begin{align}\label{eq:thm-dr1-p4:result}
    \frac{1}{\sum_{i=1}^n K_{h, i}}\Big[\sum_{i=1}^n K_{h, i}\Big(1-\frac{D_i}{e\left(X_i\right)}\Big) \bar{\mu}_1\left(X_i\right) - \sum_{i=1}^n K_{h, i}\Big(1-\frac{1-D_i}{1-e\left(X_i\right)}\Big) \bar{\mu}_0\left(X_i\right)\Big]  = o_\P(1).
\end{align}

For the fifth term in \eqref{eq:thm-dr1-5part}, let 
\[
\eta_i(x):=K\left(h^{-1/2}\lVert X_i-x\rVert\right) \frac{D_i}{e\left(X_i\right)} Y_i-\E\left[K\left(h^{-1/2}\lVert X_i-x\rVert\right) \frac{D_i}{e\left(X_i\right)} Y_i\right].
\]
By Assumption~\ref{asp:causal}\ref{asp:causal-3},
$$
\E[Y_1^2(1) \given X_1] = \E[\left(U_1(X_1)+\mu_1\left(X_1\right)\right)^2 \given X_1] \lesssim \E[{U_1(X_1)}^2 \given X_1] + \E[\mu_1^2\left(X_1\right)\given X_1] = O(1).
$$
Thus according to Lemma~\ref{lemma:manifold},
\begin{align*}
& \E\left[\eta_1^2(x)\right]
\leqslant \E\Big[K^2\left(h^{-1/2}\lVert X_1-x\rVert\right)\Big(\frac{D_1}{e\left(X_1\right)}\Big)^2 Y_1^2(1)\Big]
\lesssim \E\left[K^2\left(h^{-1/2}\lVert X_1-x\rVert\right) \E\left[Y_1^2 (1)\mid X_1\right]\right] \\
\lesssim & \E\Big[K^2\left(h^{-1/2}\lVert X_1-x\rVert\right)\Big]
\lesssim \int_V K^2\left(h^{-1/2}\lVert z_1-\psi(x)\rVert\right) g_x\left(z_1\right) \mathrm{d} \lambda\left(z_1\right)
\lesssim h^{m/2}.
\end{align*}
Meanwhile,
\begin{align*}
\E\Big[\Big(h^{\frac{d-m}{2}} \frac{1}{n} \sum_{i=1}^n K_{h, i} \frac{D_i}{e\left(X_i\right)} Y_i-\E\Big[h^{\frac{d-m}{2}} K_{h, i} \frac{D_i}{e\left(X_i\right)} Y_i\Big]\Big)^2\Big]
= \E\Big[\Big(\frac{1}{nh^{m/2}} \sum_{i=1}^n \eta_i(x)\Big)^2\Big] 
\lesssim \frac{1}{nh^{m/2}},
\end{align*}
which yields
\begin{equation} \label{eq:thm-dr1-p5:wlln}
h^{\frac{d-m}{2}} \frac{1}{n} \sum_{i=1}^n K_{h, i} \frac{D_i}{e\left(X_i\right)} Y_i \stackrel{\sf p}{\longrightarrow} \E \Big[h^{\frac{d-m}{2}} K_{h, 1} \frac{D_1}{e\left(X_1\right)} Y_1\Big].
\end{equation}
Furthermore, we have
\begin{align*}
& \E\Big[h^{\frac{d-m}{2}} K_{h, 1} \frac{D_1}{e\left(X_1\right)} Y_1\Big]
= \E \left[h^{\frac{d-m}{2}} K_{h, 1} \mu_1\left(X_1\right)\right] \\
= & \int h^{-m/2} K\left(h^{-1/2}\lVert x_1-x\rVert\right) \mu_1\left(x_1\right) \d \zeta(x_1)
\rightarrow c_K \mu_1(x) g_x\left(\psi(x)\right),  \yestag \label{eq:thm-dr1-p5:kr}
\end{align*}
which is because when $\mu_1(x)>0$, Lemma~\ref{lemma:manifoldfunc} reveals that
\begin{align*}
& \int h^{-m/2} K\left(h^{-1/2}\lVert x_1-x\rVert\right) \mu_1\left(x_1\right) \d \zeta(x_1) \\
\rightarrow & \mu_1(x) h^{-m/2}\int_{B\left(\psi(x), h^{1/2} d_Q\right)} K\left(h^{-1/2}\lVert z_1-\psi(x)\rVert \right) g_x\left(z_1\right) \mathrm{d} \lambda\left(z_1\right) \\
= &  \mu_1(x) \int K\left(\lVert t\rVert\right) g_x\left(\psi(x)+h^{1/2}t\right) \mathrm{d} \lambda\left(t\right)
\rightarrow c_K \mu_1(x) g_x\left(\psi(x)\right),
\end{align*}
and when $\mu_1(x)=0$, Lemma~\ref{lemma:manifoldfunc} shows that
\begin{equation}\label{eq:thm-dr1-p5:kr0}
    \lim _{h \rightarrow 0} h^{-m/2} \int K\left(h^{-1/2}\lVert x_1-x\rVert \right) \mu_1\left(x_1\right) \mathrm{d} \zeta(x_1)=0.
\end{equation}
By combining Equations \eqref{eq:thm-dr:kde}, \eqref{eq:thm-dr1-p5:wlln}, \eqref{eq:thm-dr1-p5:kr}, and \eqref{eq:thm-dr1-p5:kr0}, we can derive
\begin{align}\label{eq:thm-dr1-p5:result}
    \frac{1}{\sum_{i=1}^n K_{h, i}}\Big[\sum_{i=1}^n \frac{K_{h, i} D_i}{e\left(X_i\right)} Y_i-\sum_{i=1}^n \frac{K_{h, i}\left(1-D_i\right)}{1-e\left(X_i\right)} Y_i\Big] \stackrel{\sf p}{\longrightarrow} \E\Big[Y_i(1)-Y_i(0) \Biggiven X=x\Big] = \tau(x).
\end{align}

Substituting Equations \eqref{eq:thm-dr1-p1:result}, \eqref{eq:thm-dr1-p2:result}, \eqref{eq:thm-dr1-p3:result}, \eqref{eq:thm-dr1-p4:result}, and \eqref{eq:thm-dr1-p5:result} into Equation \eqref{eq:thm-dr1-5part} finalizes the proof.
 
{\bf Part II.} Assume the accuracy of the outcome model, specifically, the validity of Assumption~\ref{asp:dr-outcome}. We can decompose $\hat\tau_w$ as the following four parts:
\begin{align*}
    \hat\tau_w
    = & \frac{1}{\sum_{i=1}^n K_{h, i}} \sum_{i=1}^n K_{h, i}\left[\hat{\mu}_1\left(X_i\right)-\mu_1\left(X_i\right)\right] -\frac{1}{\sum_{i=1}^n K_{h, i}} \sum_{i=1}^n K_{h, i}\left[\hat{\mu}_0\left(X_i\right)-\mu_0\left(X_i\right)\right] \\
    & +\frac{1}{\sum_{i=1}^n K_{h, i}}\Big[\sum_{i=1}^n D_i\Big(K_{h, i}+\sum_{D_j=1-D_i} K_{h,j} w_{j \leftarrow i} \Big)\left(\mu_1\left(X_i\right)-\hat{\mu}_1\left(X_i\right)\right)\Big. \\
    & \quad \Big.-\sum_{i=1}^n\left(1-D_i\right)\Big(K_{h, i}+\sum_{D_j=1-D_i} K_{h,j} w_{j \leftarrow i} \Big)\left(\mu_0\left(X_i\right)-\hat{\mu}_0\left(X_i\right)\right)\Big] \\
    & +\frac{1}{\sum_{i=1}^n K_{h, i}}\Big[\sum_{i=1}^n D_i\Big(K_{h, i}+\sum_{D_j=1-D_i} K_{h,j} w_{j \leftarrow i} \Big)\left(Y_i-\mu_1\left(X_i\right)\right)\Big. \\
    & \quad \Big.-\sum_{i=1}^n\left(1-D_i\right)\Big(K_{h, i}+\sum_{D_j=1-D_i} K_{h,j} w_{j \leftarrow i} \Big)\left(Y_i-\mu_0\left(X_i\right)\right)\Big] \\
    & + \frac{1}{\sum_{i=1}^n K_{h, i}} \sum_{i=1}^n K_{h, i}\left[\mu_1\left(X_i\right)-\mu_0\left(X_i\right)\right]. \yestag \label{eq:thm-dr2-4part}
\end{align*}

Just as in the case of Equation \eqref{eq:thm-dr1-5part}, we will only illustrate the first part under treatment conditions.


For the first term in \eqref{eq:thm-dr2-4part}, according to Assumption~\ref{asp:dr-outcome}\ref{asp:dr-outcome-1},
$$
    \Big\lvert\frac{1}{\sum_{i=1}^n K_{h, i}} \sum_{i=1}^n K_{h, i}\left[\widehat{\mu}_1\left(X_i\right)-\mu_1\left(X_i\right)\right]\Big\rvert
    \leq \Big\lvert\frac{1}{\sum_{i=1}^n K_{h, i}} \sum_{i=1}^n K_{h, i}\Big\rvert \cdot \lVert \hat{\mu}_1 - \mu_1 \rVert_\infty = o_\P(1).
$$
Therefore,
\begin{align}\label{eq:thm-dr2-p1:result}
    \frac{1}{\sum_{i=1}^n K_{h, i}} \sum_{i=1}^n K_{h, i}\left[\widehat{\mu}_1\left(X_i\right)-\mu_1\left(X_i\right)\right]-\frac{1}{\sum_{i=1}^n K_{h, i}} \sum_{i=1}^n K_{h, i}\left[\widehat{\mu}_0\left(X_i\right)-\mu_0\left(X_i\right)\right] = o_\P(1).
\end{align}

For the second term in \eqref{eq:thm-dr2-4part}, according to Assumption~\ref{asp:dr-outcome} and \eqref{eq:mse-kernelexp}, 
\begin{align*}
&\Big\lvert h^{\frac{d-m}{2}} \frac{1}{n} \sum_{i=1}^n D_i\Big(K_{h, i}+\sum_{D_j=1-D_i} K_{h,j} w_{j \leftarrow i}\Big)\left(\mu_1\left(X_i\right)-\widehat{\mu}_1\left(X_i\right)\right)\Big\rvert \\
\le & h^{\frac{d-m}{2}} \frac{1}{n}\sum_{i=1}^n D_i K_{h, i} \lVert \widehat{\mu}_1-\mu_1\rVert_\infty + h^{\frac{d-m}{2}}\frac{1}{n} \sum_{i=1}^n \Big\lvert \sum_{D_j=1-D_i} K_{h,j} w_{j \leftarrow i}\Big\rvert \lVert \widehat{\mu}_1-\mu_1\rVert_\infty
= o_\P(1).\yestag \label{eq:thm-dr2-p2:2term}
\end{align*}
Therefore,
\begin{align*}
\frac{1}{\sum_{i=1}^n K_{h, i}} & \Big[\sum_{i=1}^n D_i\Big(K_{h, i}+\sum_{D_j=1-D_i} K_{h,j} w_{j \leftarrow i}\Big)\left(\mu_1\left(X_i\right)-\widehat{\mu}_1\left(X_i\right)\right)\Big. \\
& \Big.- \sum_{i=1}^n\left(1-D_i\right)\Big(K_{h, i}+\sum_{D_j=1-D_i} K_{h,j} w_{j \leftarrow i}\Big)\left(\mu_0\left(X_i\right)-\widehat{\mu}_0\left(X_i\right)\right)\Big] =o_\P(1). \yestag\label{eq:thm-dr2-p2:result}
\end{align*}


For the third term in \eqref{eq:thm-dr2-4part}, define
$$
E_{n,i}(x) := h^{\frac{d-m}{2}}\left(2 D_i-1\right)\Big(K_{h, i}+\sum_{j: D_j=1-D_i} K_{h,j} w_{j \leftarrow i}\Big)\left(Y_i-\mu_1\left(X_i\right)\right).
$$
We notice that for each $n$, condition on $[(X_i,D_i)]_{i=1}^n$, $[E_{n,i}]_{i=1}^n$ are independent. To apply the weak law of triangular arrays (Theorem 2.2.11 in \cite{durrett2019probability}), we only need to prove
\begin{equation} \label{eq:thm-dr2-p3:triang-1}
\sum_{i=1}^n \P\left(\left|E_{n,i}(x)\right|>n \Biggiven \mX, \mD \right) \rightarrow 0
\end{equation}
and
\begin{equation} \label{eq:thm-dr2-p3:triang-2}
\frac{1}{n^2} \sum_{i=1}^n \E \left[E^2_{n,i}(x)\ind\Big(\lvert E_{n,i}(x) \rvert \leq n \Big) \Biggiven \mX, \mD \right] \rightarrow 0.
\end{equation}
According to Assumption~\ref{asp:dr-outcome}\ref{asp:dr-outcome-2},
\begin{equation} \label{eq:thm-dr2-p3:triang-mean0}
    \E\Big[E_{n,i}(x)\Big]=\E\Big[\E\Big[E_{n,i}(x)\Biggiven \mX, \mD\Big]\Big] =0.
\end{equation}
Furthermore, utilizing Hölder's inequality we have
\begin{align*}
& \E\Big[ \lvert E_{n,i}(x)\rvert \Biggiven \mX, \mD  \Big] \\
= & h^{\frac{d-m}{2}}\Big\lvert K_{h, i}+\sum_{D_j=1-D_i} K_{h,j} w_{j \leftarrow i}\Big\rvert\E \Big[ \lvert Y_i-\mu_1\left(X_i\right)\rvert \Biggiven X_i, D_i=1 \Big] \\
\le & h^{\frac{d-m}{2}} \Big\lvert K_{h, i}+\sum_{D_j=1-D_i} K_{h,j} w_{j \leftarrow i}\Big\rvert \lVert \sigma_1(X_i) \rVert_\infty,
\end{align*}
where by Assumption~\ref{asp:causal}\ref{asp:causal-3} $\sigma_\omega^2(x)$ is bounded for $\omega \in \{0,1\}$. And according to Assumption~\ref{asp:dr-outcome}\ref{asp:dr-outcome-3},
\begin{align*}
\E\Big[\lvert E_{n,i}(x)\rvert\Big]
\leq \E \Big[h^{\frac{d-m}{2}}K_{h, i}+ \Big\lvert h^{\frac{d-m}{2}}\sum_{D_j=1-D_1} K_{h,j} w_{j \leftarrow i} \Big\rvert\Big] \lVert \sigma_1 \rVert_\infty
= O(1). \yestag \label{eq:thm-dr2-p3:triang-l1}
\end{align*}
For \eqref{eq:thm-dr2-p3:triang-1}, we obtain that
\begin{align*}
& \sum_{i=1}^n \P\left(\left|E_{n,i}(x)\right|>n \Biggiven \mX, \mD \right)
= \frac{1}{n} \sum_{i=1}^n n \P\left(\lvert E_{n,i}(x)\rvert>n \Biggiven \mX, \mD \right) \\
\leq & \frac{1}{n} \sum_{i=1}^n \E\Big[ \lvert E_{n,i}(x)\rvert \ind\Big(\lvert E_{n,i}(x) \rvert > n \Big) \Biggiven \mX, \mD  \Big]
= o_\P(1),
\end{align*}
where the last step is because by \eqref{eq:thm-dr2-p3:triang-l1} we have
\begin{align*}
\E\Big[ \Big\lvert \frac{1}{n} \sum_{i=1}^n \E\Big[ \lvert E_{n,i}(x)\rvert \ind\Big(\lvert E_{n,i}(x) \rvert > n \Big) \Biggiven \mX, \mD  \Big] \Big\rvert \Big]
= \E\Big[ \lvert E_{n,1}(x)\rvert \ind\Big(\lvert E_{n,1}(x) \rvert > n \Big)  \Big]
\rightarrow 0.
\end{align*}
For \eqref{eq:thm-dr2-p3:triang-2}, we notice that
\begin{align*}
\E \left[E^2_{n,i}(x) \Biggiven \mX, \mD \right]
= h^{d-m} \Big(K_{h, i}+\sum_{j: D_j=1-D_i} K_{h,j} w_{j \leftarrow i}\Big)^2 \sigma_{D_i}^2(X_i).
\end{align*}
Thus, according to the proof of \eqref{eq:thm-dr:kernel2} and Assumption~\ref{asp:dr-outcome}\ref{asp:dr-outcome-3}, we have
\begin{align*}
& \E \Big[ \frac{1}{n^2} \sum_{i=1}^n \E \left[E^2_{n,i}(x)\ind\Big(\lvert E_{n,i}(x) \rvert \leq n \Big) \Biggiven \mX, \mD \right] \Big]
\leq \frac{1}{n} \E \left[ E^2_{n,1}(x)\right] \\
\leq & \frac{2}{n} \E \left[h^{d-m}K^2_{h, 1} \right] \lVert \sigma_1^2\rVert_\infty + \frac{2}{n} \E \Big[h^{d-m}\Big(\sum_{j: D_j=1-D_1} K_{h,j} w_{j \leftarrow 1}\Big)^2  \Big] \lVert \sigma_1^2\rVert_\infty \\
\lesssim & \frac{1}{n} \E \left[h^{d-m}K^2_{h, 1} \right] + \frac{1}{n} h^{-m/2} \\
\lesssim & \frac{1}{nh^{m/2}}.
\end{align*}
Accordingly, using the weak law of triangular arrays (Theorem 2.2.11 in \cite{durrett2019probability}), we obtain that for any $\varepsilon > 0$,
$$
\lim _{n \rightarrow \infty} P\Big(\Big|\frac{1}{n} \sum_{i=1}^n E_{n,i}(x)\Big|>\varepsilon \given \mX, \mD\Big)=0.
$$
And by dominated convergence theorem,
\begin{align*}
& \lim _{n \rightarrow \infty} \P\Big(\Big|\frac{1}{n} \sum_{i=1}^n E_{n,i}(x)\Big|>\varepsilon \Big)
= \lim _{n \rightarrow \infty} \E\Big[\E\Big[\ind\Big(\Big|\frac{1}{n} \sum_{i=1}^n E_{n,i}(x)\Big|>\varepsilon \Big)\given \mX, \mD\Big]\Big] \\
= & \E\Big[ \lim _{n \rightarrow \infty} 
 \E\Big[\ind\Big(\Big|\frac{1}{n} \sum_{i=1}^n E_{n,i}(x)\Big|>\varepsilon \Big)\given \mX, \mD\Big]\Big]
= \E\Big[ \lim _{n \rightarrow \infty} \P\Big(\Big|\frac{1}{n} \sum_{i=1}^n E_{n,i}(x)\Big|>\varepsilon \given \mX, \mD\Big)\Big]
=0.
\end{align*}
Therefore,
\begin{align*}
\frac{1}{\sum_{i=1}^n K_{h, i}} & 
 \Big[\sum_{i=1}^n D_i\Big(K_{h, i}+\sum_{D_j=1-D_i} K_{h,j} w_{j \leftarrow i}\Big)\left(Y_i-\mu_1\left(X_i\right)\right) \\
& -\sum_{i=1}^n\left(1-D_i\right)\Big(K_{h, i}+\sum_{D_j=1-D_i} K_{h,j} w_{j \leftarrow i}\Big)\left(Y_i-\mu_0\left(X_i\right)\right)\Big] = o_\P(1). \yestag \label{eq:thm-dr2-p3:result}
\end{align*}

For the fourth term in \eqref{eq:thm-dr2-4part}, similar to the process of term \eqref{eq:thm-dr1-p5:result} and let $\eta_i(x)$ here be
$
K\left(h^{-1/2}\lVert X_i-x\rVert\right) \mu_1(X_i)-\E\left[K\left(h^{-1/2}\lVert X_i-x\rVert\right) \mu_1(X_i)\right]
$.
By the continuity and boundedness of $\mu_1(x)$ and $f(x)$, similar to \eqref{eq:thm-dr1-p5:wlln} and \eqref{eq:thm-dr1-p5:kr}, we have
\begin{equation} \label{eq:thm-dr2-p5:wlln}
h^{\frac{d-m}{2}} \frac{1}{n} \sum_{i=1}^n K_{h, i} \mu_1\left(X_i\right) \stackrel{\sf p}{\longrightarrow} h^{\frac{d-m}{2}} \E \left[K_{h, i} \mu_1\left(X_i\right)\right] \longrightarrow c_K\mu_1\left(x\right) g_x\left(\psi(x)\right).
\end{equation}
By combining Equations \eqref{eq:thm-dr:kde} and \eqref{eq:thm-dr2-p5:wlln}, we have
\begin{align}\label{eq:thm-dr2-p5:result}
    \frac{1}{\sum_{i=1}^n K_{h, i}}\sum_{i=1}^n K_{h, i}\left[\mu_1\left(X_i\right)-\mu_0\left(X_i\right)\right] \stackrel{\sf p}{\longrightarrow} \tau(x).
\end{align}

Substituting Equations \eqref{eq:thm-dr2-p1:result}, \eqref{eq:thm-dr2-p2:result}, \eqref{eq:thm-dr2-p3:result}, and \eqref{eq:thm-dr2-p5:result} into Equation \eqref{eq:thm-dr2-4part} concludes the proof.
\end{proof}

\subsection{Proof of Theorem~\ref{thm:mse}}

\begin{proof}[Proof of Theorem~\ref{thm:mse}]
Define $\epsilon_i := Y_i - \mu_{D_i}(X_i)$ for $i \in \zahl{n}$. We first decompose $\hat\tau_w$ as the following four parts:
\begin{align*}
\hat\tau_w(x) 
= & \frac{1}{\sum_{i=1}^n K_{h, i}} \sum_{i=1}^n K_{h, i} \Big[\mu_1(X_i) - \mu_0(X_i)\Big] \\
& + \frac{1}{\sum_{i=1}^n K_{h, i}} \sum_{i=1}^n\left(2 D_i-1\right)\Big(K_{h, i}+\sum_{j: D_j=1-D_i} K_{h, j} w_{j \leftarrow i}\Big)\epsilon_i\\
& + \frac{1}{\sum_{i=1}^n K_{h, i}} \sum_{i=1}^n\left(2 D_i-1\right)K_{h, i} \Big[\sum_{j:D_j=1-D_i} w_{i\leftarrow j} \mu_{1-D_i}(X_i)-\sum_{j:D_j=1-D_i} w_{i\leftarrow j} \mu_{1-D_i}(X_j)\Big]\\
& - \frac{1}{\sum_{i=1}^n K_{h, i}} \sum_{i=1}^n\left(2 D_i-1\right)K_{h, i} \Big[\sum_{j:D_j=1-D_i} w_{i\leftarrow j} \hat{\mu}_{1-D_i}(X_i)-\sum_{j:D_j=1-D_i} w_{i\leftarrow j} \hat{\mu}_{1-D_i}(X_j)\Big]\\
=:& \bar{\tau}(\mX, x) + F_n(x) + B_n(x) - \hat{B}_n(x).
\end{align*}

Therefore,
\begin{equation} \label{eq:mse3part}
\E \Big[\Big(\hat\tau_w(x)  - \tau(x)\Big)^2\Big] \lesssim \E \Big[\Big(\bar{\tau}(\mX, x)  - \tau(x)\Big)^2\Big] + \E \Big[F_n^2(x)\Big] + \E \Big[\Big(B_n(x) - \hat{B}_n(x)\Big)^2\Big].
\end{equation}

\begin{lemma}[inverse moment of kernel function] \label{lemma:inverse-kernel}
Assuming Assumptions~\ref{asp:causal}-\ref{asp:kernel} and \ref{asp:mse-k} hold, we then have, for any $\alpha>2$,
$$
\E\Big[\Big\lvert\frac{1}{\frac{1}{n}h^{-m/2} \sum_{i=1}^n K\Big(h^{-1/2}\lVert X_i-x\rVert\Big)} - \frac{1}{c_K g_x\left(z\right)}\Big\rvert^{\alpha}\Big] \lesssim \Big(h+\frac{1}{nh^{m/2}}\Big)^{\alpha/2}.
$$
\end{lemma}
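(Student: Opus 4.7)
The plan is to apply the identity $\tfrac{1}{a}-\tfrac{1}{b}=\tfrac{b-a}{ab}$ with $a=S_n:=\tfrac{1}{n}h^{-m/2}\sum_{i=1}^n K(h^{-1/2}\|X_i-x\|)$ and $b=c_K g_x(z)$, and then split the target expectation according to whether $S_n$ stays close to its deterministic limit $c_K g_x(z)$. Concretely, on the event $A:=\{|S_n-c_K g_x(z)|\le c_K g_x(z)/2\}$, the denominator $S_n$ is bounded below by a positive constant, so that pointwise $|S_n^{-1}-(c_Kg_x(z))^{-1}|^\alpha \lesssim |S_n - c_K g_x(z)|^\alpha$; on the complementary event $A^c$, one must separately control $S_n^{-\alpha}$ using tail bounds.

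For the good-event part, I would establish $\E[|S_n-c_K g_x(z)|^\alpha] \lesssim (h+1/(nh^{m/2}))^{\alpha/2}$ by a bias--variance decomposition. Writing $T_i:=h^{-m/2}K(h^{-1/2}\|X_i-x\|)$, Lemma~\ref{lemma:manifoldfunc} yields $|\E T_i - c_K g_x(z)|\lesssim h^{1/2}$ (so the bias contributes $h^{\alpha/2}$), while Assumption~\ref{asp:kernel}\ref{asp:kernel1} gives $\|T_i\|_\infty\lesssim h^{-m/2}$ and the computation underlying \eqref{eq:thm-dr:kernel2} gives $\E T_i^2\lesssim h^{-m/2}$. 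A Rosenthal-type inequality for sums of i.i.d. centered variables then produces $\E[|S_n-\E S_n|^\alpha]\lesssim (nh^{m/2})^{-\alpha/2} + (nh^{m/2})^{-(\alpha-1)}$, in which the first term dominates since $\alpha>2$ and $nh^{m/2}\to\infty$ by Assumption~\ref{asp:kernel}\ref{asp:kernel2}. Integrating the pointwise bound on $A$ against this moment estimate gives the claimed rate there.

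The main obstacle is the bad-event part, where $S_n$ could in principle be arbitrarily close to zero. Here Assumption~\ref{asp:mse-k} (that $K$ is bounded away from zero on its support) is decisive: as soon as even a single $X_i$ lies in the ball $B(x,h^{1/2}d_K)$, one has $T_i\gtrsim h^{-m/2}$ and consequently $S_n\gtrsim h^{-m/2}/n$, so $S_n^{-1}\lesssim nh^{m/2}$ deterministically on that event. The probability that no such $X_i$ exists is $\lesssim\exp(-c\,nh^{m/2})$ by Assumption~\ref{asp:manifold}\ref{asp:manifold-f}, and a Bernstein bound applied to the centered variables $T_i-\E T_i$ (with both variance and sup-norm $\lesssim h^{-m/2}$) yields $\P(A^c)\lesssim \exp(-c'\,nh^{m/2})$. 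Combining these two facts (either directly or via Cauchy--Schwarz), one obtains $\E\bigl[|S_n^{-1}-(c_K g_x(z))^{-1}|^\alpha\mathbf{1}_{A^c}\bigr]\lesssim (nh^{m/2})^\alpha \exp(-c''\,nh^{m/2})$, which is super-polynomially small in $(nh^{m/2})^{-1}$ and is therefore absorbed into the claimed bound.
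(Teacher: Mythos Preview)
Your argument is correct, but it takes a genuinely different route from the paper's. The paper does not split into good and bad events; instead it writes $|S_n^{-1}-b^{-1}|=|S_n-b|/(|S_n|\,b)$ and applies H\"older's inequality directly to separate $\E[S_n^{-\alpha\alpha_1}]$ from $\E[|S_n-b|^{\alpha\alpha_2}]$. For the first factor it uses Assumption~\ref{asp:mse-k} to dominate $S_n^{-1}$ by $(nh^{m/2})$ times the inverse of $1+\sum_{i}\ind(X_i\in B(x,h^{1/2}d_K))$ and then invokes the Cribari-Neto--Garcia--Vasconcellos result on inverse binomial moments to get $\E[S_n^{-\beta}]\lesssim 1$ for every $\beta$. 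For the second factor it computes even-integer centered moments of $S_n$ by a direct multinomial expansion rather than Rosenthal.

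Your good/bad event decomposition is the more classical probabilistic treatment of random denominators, and citing Rosenthal and Bernstein is arguably more transparent than the paper's combinatorial moment computation plus the binomial inverse-moment citation. The paper's H\"older route, on the other hand, is a clean one-liner once the two moment ingredients are in place and avoids needing an explicit exponential tail bound. One small point worth making explicit in your write-up: on the event that no $X_i$ lands in $B(x,h^{1/2}d_K)$ you have $S_n=0$, so you should state the standard convention that $S_n^{-1}$ (equivalently, the estimator) is set to zero there; the paper finesses the same issue by passing to $1+\text{Binomial}$ in the denominator.
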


\begin{lemma}\label{lemma-mse:tau.bar}
Suppose Assumptions~\ref{asp:causal}-\ref{asp:kernel}, \ref{asp:mse-mu}-\ref{asp:mse-k} and \ref{asp:permutation} hold. We then have, for any interior point $x \in \supp(\zeta)$,
\begin{equation*}
    \E \Big[\Big(\bar{\tau}(\mX, x)  - \tau(x)\Big)^2\Big] = O(h).
\end{equation*}
\end{lemma}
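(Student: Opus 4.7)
The plan is to exploit the bounded support of the kernel together with the Lipschitz continuity of the regression functions to obtain a purely deterministic $O(h^{1/2})$ bound on $\bar\tau(\mX,x)-\tau(x)$. On the event $A_n:=\{\sum_{i=1}^n K_{h,i}>0\}$ one has
\[
\bar\tau(\mX,x)-\tau(x)=\frac{\sum_{i=1}^n K_{h,i}\,[\tau(X_i)-\tau(x)]}{\sum_{i=1}^n K_{h,i}},\qquad \tau:=\mu_1-\mu_0.
\]
Assumption~\ref{asp:mse-mu} makes $\tau$ Lipschitz with some constant $L$, while Assumption~\ref{asp:kernel}\ref{asp:kernel1} forces $\|X_i-x\|\le h^{1/2}d_K$ whenever $K_{h,i}\neq 0$. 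Hence $|\tau(X_i)-\tau(x)|\le L\,d_K\,h^{1/2}$ on the support of each summand, and pulling this uniform bound out of the ratio yields the deterministic estimate $|\bar\tau(\mX,x)-\tau(x)|\le L\,d_K\,h^{1/2}$ on $A_n$, which already gives the desired $O(h)$ bound after squaring.

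The only remaining task is to control the contribution from the degenerate event $A_n^c$, on which $\bar\tau(\mX,x)$ has to be defined (say, as $0$). Since $\mu_0,\mu_1$ are bounded by Assumption~\ref{asp:causal}\ref{asp:causal-3}, $\tau$ is bounded and this contribution is at most $\lVert\tau\rVert_\infty^2\,\P(A_n^c)$. I would lower-bound $p_h:=\P(K_{h,1}>0)$ by $c\,h^{m/2}$ by combining Assumption~\ref{asp:mse-k} (kernel bounded away from zero on its support), Assumption~\ref{asp:manifold}\ref{asp:manifold-f} (manifold density bounded away from zero), and Lemma~\ref{lemma:manifold}. Independence of the $X_i$'s then gives $\P(A_n^c)\le(1-p_h)^n\le\exp(-c\,n\,h^{m/2})$, which is $o(h^k)$ for every $k\ge 1$ by Assumption~\ref{asp:kernel}\ref{asp:kernel2}; adding this to the deterministic part yields the advertised $O(h)$ rate for the full expectation.

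Because the manifold geometry is already encapsulated inside Lemma~\ref{lemma:manifold}, I do not anticipate any genuine obstacle here: the Lipschitz-plus-compact-support mechanism is coordinate-free and bypasses any need for finer moment control such as the inverse moment bound of Lemma~\ref{lemma:inverse-kernel}. The only mild subtlety is the bookkeeping around the null event $\{\sum_i K_{h,i}=0\}$ sketched above, which is precisely where Assumption~\ref{asp:kernel}\ref{asp:kernel2} is used.
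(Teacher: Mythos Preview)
Your argument is correct and is genuinely simpler than the paper's proof. You exploit directly that $K\ge 0$ with bounded support, so on $A_n$ the quantity $\bar\tau(\mX,x)-\tau(x)$ is a convex combination of terms $\tau(X_i)-\tau(x)$ with $\|X_i-x\|\le d_K h^{1/2}$; Lipschitzness of $\tau=\mu_1-\mu_0$ then gives the deterministic bound $|\bar\tau(\mX,x)-\tau(x)|\le L d_K h^{1/2}$, and the degenerate event is handled by an exponential tail. The paper, by contrast, separates numerator and denominator: it first proves $\alpha$-th moment bounds $\E\bigl[\,\bigl|\tfrac{1}{n}\sum_i h^{(d-m)/2}K_{h,i}(\mu_\omega(X_i)-\mu_\omega(x))\bigr|^{\alpha}\bigr]\lesssim h^{\alpha/2}$ via the same Lipschitz-plus-support argument, and then couples this with the inverse-moment estimate of Lemma~\ref{lemma:inverse-kernel} through a H\"older split $\tfrac{1}{\hat c}=\tfrac{1}{c}+(\tfrac{1}{\hat c}-\tfrac{1}{c})$.

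What each approach buys: your route is shorter, coordinate-free, and avoids Lemma~\ref{lemma:inverse-kernel} altogether for this lemma; it relies only on $K\ge 0$, compact support, and the Lipschitz assumption. The paper's route is more laborious here but is part of a unified template (numerator moment bound plus inverse-moment lemma plus H\"older) that is reused verbatim in Lemmas~\ref{lemma-mse:fn} and~\ref{lemma-mse:b}, where no convex-combination shortcut is available. The paper's decomposition would also survive signed kernels, whereas your argument crucially needs nonnegativity of $K$ to pass to a weighted average. Your treatment of the null event $A_n^c$ is more explicit than the paper's, which simply works with the ratio throughout and implicitly absorbs the degeneracy into the inverse-moment bound.
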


\begin{lemma}\label{lemma-mse:fn}
Suppose Assumptions~\ref{asp:causal}-\ref{asp:kernel}, \ref{asp:mse-mu}-\ref{asp:mse-k}, \ref{asp:permutation}, \ref{asp:dr-outcome}\ref{asp:dr-outcome-2} hold, with either Assumption~\ref{asp:dr-propensity}\ref{asp:dr-propensity-2} or Assumption~\ref{asp:dr-outcome}\ref{asp:dr-outcome-3} held. We then have, for any interior point $x \in \supp(\zeta)$,
\begin{equation*}
    \E \Big[F_n^2(x)\Big] = O\Big(\frac{1}{nh^{m/2}}\Big) + o(h).
\end{equation*}
\end{lemma}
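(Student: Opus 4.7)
The plan is to exploit the conditional mean-zero and independence structure of the residuals $\epsilon_i := Y_i - \mu_{D_i}(X_i)$ given $(\mX, \mD)$, and to replace the random denominator $D_n(x) := n^{-1}\sum_{i=1}^n K_{h,i}$ by its deterministic surrogate $\bar D := c_K g_x(\psi(x))\, h^{(d-m)/2}$. Set $\zeta_i := (2D_i - 1)\bigl(K_{h,i} + \sum_{j:D_j=1-D_i} K_{h,j} w_{j\leftarrow i}\bigr)$ so that $F_n(x) = (nD_n(x))^{-1}\sum_i \zeta_i \epsilon_i$. By Assumption~\ref{asp:dr-outcome}\ref{asp:dr-outcome-2} each $\zeta_i$ is $(\mX, \mD)$-measurable, and by Assumption~\ref{asp:causal} the $\epsilon_i$ are conditionally mean-zero, independent, and uniformly bounded given $(\mX, \mD)$. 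Conditioning annihilates every cross term, yielding
\begin{equation*}
\E\bigl[F_n^2(x) \,\big|\, \mX, \mD\bigr] \;=\; \frac{1}{n^2 D_n^2(x)} \sum_{i=1}^n \zeta_i^2\, \sigma_{D_i}^2(X_i).
\end{equation*}
Using $1/D_n^2 \leq 2/\bar D^2 + 2(1/D_n - 1/\bar D)^2$ and exchangeability, I split $\E[F_n^2(x)] \leq (\mathrm{I}) + (\mathrm{II})$ with leading term $(\mathrm{I}) := (2/(n\bar D^2))\E[\zeta_1^2 \sigma_{D_1}^2(X_1)]$ and remainder $(\mathrm{II}) := (2/n^2)\E[(1/D_n - 1/\bar D)^2 \sum_{i=1}^n \zeta_i^2 \sigma_{D_i}^2(X_i)]$.

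For the leading term $(\mathrm{I})$, the inequality $(a+b)^2 \leq 2a^2 + 2b^2$ reduces the task to bounding $\E[K_{h,1}^2]$ and $\E[(\sum_{j:D_j=1-D_1} K_{h,j} w_{j\leftarrow 1})^2]$ by a constant multiple of $h^{m/2-d}$. The former follows from the standard kernel computation reproduced in~\eqref{eq:thm-dr:kernel2}. Under Assumption~\ref{asp:dr-outcome}\ref{asp:dr-outcome-3} the latter is immediate after the $h^{(d-m)/2}$ rescaling. Under Assumption~\ref{asp:dr-propensity}\ref{asp:dr-propensity-2}, I would triangulate against the target $K_{h,1}\bigl(D_1\tfrac{1-e(X_1)}{e(X_1)}+(1-D_1)\tfrac{e(X_1)}{1-e(X_1)}\bigr)$: the deviation contributes $O(h^{m/2-d}(h^{\epsilon} + n^{-1})) = o(h^{m/2-d})$ via~\eqref{asp:dr-propensity-2-single}, and the target itself is $L^2$-bounded by a constant multiple of $\E[K_{h,1}^2]$ thanks to the overlap in Assumption~\ref{asp:causal}\ref{asp:causal-2}. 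Combined with $\bar D^2 \asymp h^{m-d}$ and boundedness of $\sigma_{D_1}^2$, this yields $(\mathrm{I}) = O(1/(nh^{m/2}))$.

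For the remainder $(\mathrm{II})$, the key ingredient is Lemma~\ref{lemma:inverse-kernel} at $\alpha = 4$, which gives $\E[(1/D_n - 1/\bar D)^4]^{1/2} \lesssim h + 1/(nh^{m/2})$. Applying Cauchy--Schwarz, the complementary factor $\E[(n^{-1}\sum_i \zeta_i^2 \sigma_{D_i}^2)^2]^{1/2}$ is controlled in terms of $\E[\zeta_1^4]$, for which boundedness of the kernel $K$ (so that $K^4 \lesssim K$ on $\supp(K)$) together with the support constraints and the already-established second-moment bounds provide a crude polynomial estimate in $h^{-1}$ that is absorbed by the factors of $1/n$ outside. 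Combining then contributes at most $o(h) + O(1/(nh^{m/2}))$ to the total, completing the argument.

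I expect the main technical obstacle to be the $L^2$ bound on $\sum_{j:D_j=1-D_1}K_{h,j}w_{j\leftarrow 1}$ under the propensity-side Assumption~\ref{asp:dr-propensity}\ref{asp:dr-propensity-2}, since that assumption quantifies only the \emph{deviation} from a specified target rather than the weighted sum itself; one must leverage the overlap condition in Assumption~\ref{asp:causal}\ref{asp:causal-2} carefully to see that the target itself is $L^2$-bounded by $\E[K_{h,1}^2]$ before the triangle inequality can close the argument. Assembling the pieces then yields $\E[F_n^2(x)] = O(1/(nh^{m/2})) + o(h)$, as claimed.
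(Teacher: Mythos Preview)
Your leading term $(\mathrm{I})$ is handled correctly and mirrors the paper's main variance step: under either weight assumption one has $\E[\zeta_1^2]\lesssim h^{m/2-d}$, giving $(\mathrm{I})=O(1/(nh^{m/2}))$. The gap is in $(\mathrm{II})$. Your Cauchy--Schwarz requires $\E\bigl[(n^{-1}\sum_i\zeta_i^2)^2\bigr]^{1/2}$, which you reduce to $\E[\zeta_1^4]$. But the listed hypotheses give only \emph{second}-moment control on the weighted-sum piece $W_1:=\sum_{j:D_j=1-D_1}K_{h,j}w_{j\leftarrow 1}$: Assumption~\ref{asp:dr-propensity}\ref{asp:dr-propensity-2} bounds the $L^2$ deviation of $W_1$ from its propensity target, and Assumption~\ref{asp:dr-outcome}\ref{asp:dr-outcome-3} bounds $\E[W_1^2]$. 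Neither delivers a fourth moment, and there is no pointwise cap on $W_1$ either, because $\sum_j|w_{j\leftarrow 1}|$---the \emph{incoming} weights to unit~$1$---is not controlled by any assumption in force. So the ``crude polynomial estimate in $h^{-1}$'' you invoke for $\E[\zeta_1^4]$ is simply unavailable, and the fixed-exponent Cauchy--Schwarz cannot close.

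The paper avoids fourth moments by pairing H\"older at a \emph{flexible} conjugate exponent $\alpha_2\downarrow 1$ (not $\alpha_2=2$) with an auxiliary input that absorbs the resulting loss $h^{-m(1-1/\alpha_2)}$. Under Assumption~\ref{asp:dr-propensity}\ref{asp:dr-propensity-2}, that input is the \emph{smallness} $h^\epsilon+1/n$ of the deviation, isolated by first splitting $\zeta_i$ into the target $K_{h,i}\bigl(D_i/e(X_i)+(1-D_i)/(1-e(X_i))\bigr)$---which has all moments and is handled exactly as your $(\mathrm{I})$---plus the deviation; the order of operations (split target/deviation \emph{before} treating the denominator) is precisely what your scheme reverses, and that reversal is what costs you the needed structure. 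Under Assumption~\ref{asp:dr-outcome}\ref{asp:dr-outcome-3}, the input is instead an $L^\infty$ bound $O(h^{-m/2})$ on the \emph{aggregated} sum $n^{-1}h^{(d-m)/2}\sum_i W_i\epsilon_i$, obtained by swapping the $i,j$ summation and invoking the \emph{outgoing}-weight bound of Assumption~\ref{asp:wij}. Neither device appears in your plan.
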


\begin{lemma}\label{lemma-mse:b}
Suppose Assumptions~\ref{asp:causal}-\ref{asp:kernel}, \ref{asp:mse-mu}-\ref{asp:mse-k}, \ref{asp:permutation}-\ref{asp:wij}, \ref{asp:dr-outcome}\ref{asp:dr-outcome-2} hold, with either Assumptions~\ref{asp:mse-muhat}-\ref{asp:mse-discrepancy} or Assumption~\ref{asp:mse-weight} held. We then have, for any interior point $x \in \supp(\zeta)$,
\begin{equation*}
    \E \Big[\Big(B_n(x) - \hat{B}_n(x)\Big)^2\Big] = O(h).
\end{equation*}
\end{lemma}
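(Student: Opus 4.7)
The plan is to split into the two alternative hypotheses and in each case reduce the $L^2$ bound on $B_n - \hat B_n$ to quantities already controlled either by Lemma~\ref{lemma:inverse-kernel} or by the explicit weight assumptions. As a preliminary step, I would rewrite
\[
B_n(x) - \hat B_n(x) = \frac{1}{A}\sum_{i=1}^n (2D_i - 1) K_{h,i} \sum_{j:D_j=1-D_i} w_{i\leftarrow j}\bigl[\delta_{1-D_i}(X_i) - \delta_{1-D_i}(X_j)\bigr],
\]
where $A := \sum_{k=1}^n K_{h,k}$ and $\delta_\omega := \mu_\omega - \hat\mu_\omega$. This separates the sample-dependent discrepancy $\delta_\omega$ from the weight-kernel structure, which is what each alternative alone controls.

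For the first alternative (Assumptions~\ref{asp:mse-muhat}-\ref{asp:mse-discrepancy}), both $\mu_\omega$ and $\hat\mu_\omega$ are Lipschitz on $\supp(\zeta)$, so $\delta_\omega$ is as well, and hence $|\delta_\omega(X_i) - \delta_\omega(X_j)| \lesssim \|X_i - X_j\|$ uniformly. Plugging this in and taking absolute values gives
\[
|B_n(x) - \hat B_n(x)| \lesssim \frac{T}{V},
\]
where $T := \tfrac{h^{(d-m)/2}}{n}\sum_i K_{h,i} \sum_{j:D_j=1-D_i} |w_{i\leftarrow j}|\|X_i-X_j\|$ and $V := \tfrac{h^{(d-m)/2}}{n} A$. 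Assumption~\ref{asp:mse-discrepancy} delivers $\E[T^{2\gamma}] \lesssim h^{\gamma}$ for some $\gamma > 1$, while Lemma~\ref{lemma:inverse-kernel} yields $\E[V^{-\alpha}] = O(1)$ for every $\alpha > 2$. Choosing $\alpha = 2\gamma/(\gamma-1) > 2$, Hölder's inequality closes the bound:
\[
\E\bigl[(B_n - \hat B_n)^2\bigr] \lesssim \bigl(\E[T^{2\gamma}]\bigr)^{1/\gamma}\bigl(\E[V^{-\alpha}]\bigr)^{(\gamma-1)/\gamma} \lesssim h.
\]

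For the second alternative (Assumption~\ref{asp:mse-weight}), I would instead dominate the increment uniformly, $|\delta_{1-D_i}(X_i) - \delta_{1-D_i}(X_j)| \leq 2\|\delta_{1-D_i}\|_\infty$, which is independent of $j$. This pulls the supremum outside the weight sum:
\[
|B_n(x) - \hat B_n(x)| \leq 2\max_{\omega}\|\delta_\omega\|_\infty \cdot \frac{1}{A}\sum_{i=1}^n K_{h,i}\sum_{j:D_j=1-D_i}|w_{i\leftarrow j}|.
\]
Assumption~\ref{asp:wij} bounds the inner weight sum by a constant $C_w$, while $\sum_i K_{h,i}/A = 1$ telescopes the outer factor, leaving $|B_n - \hat B_n| \leq 2 C_w \max_\omega\|\delta_\omega\|_\infty$. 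Squaring and invoking Assumption~\ref{asp:mse-weight} then yields $\E[(B_n - \hat B_n)^2] \lesssim h$.

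The main obstacle lies in the first alternative, where the Lipschitz constant of $\hat\mu_\omega$ could in principle be data-dependent and hence random. I would read Assumption~\ref{asp:mse-muhat} as supplying a deterministic Lipschitz bound, which is how the statement naturally reads; otherwise a supplementary moment control on that constant would be needed, which would force a sharper Hölder split. A secondary bookkeeping point is verifying that the exponent $2\gamma/(\gamma-1)$ produced by the Hölder step exceeds $2$ for every $\gamma > 1$, which is immediate. Everything else is routine once the $T/V$ reduction is in place.
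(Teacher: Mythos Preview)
Your proposal is correct and follows essentially the same route as the paper. In Part~II (Assumption~\ref{asp:mse-weight}) the arguments are identical; in Part~I the paper first splits $V^{-1}$ into the constant $1/(c_K g_x(\psi(x)))$ plus the centered piece and bounds the two resulting terms separately, whereas you apply a single H\"older step directly to $T^2V^{-2}$, which is a slight streamlining but not a different idea. One small remark: the bound $\E[V^{-\alpha}]=O(1)$ you invoke is established inside the proof of Lemma~\ref{lemma:inverse-kernel} rather than in its statement, but it also follows from the statement via the triangle inequality in $L^\alpha$.
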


Combining Lemmas~\ref{lemma-mse:tau.bar}-\ref{lemma-mse:b}, we have
\begin{equation}\label{eq:mse-resulth}
    \text{MSE}\left(\tau_w \left(x\right)\right)=\E \Big[\Big(\hat\tau_w(x)  - \tau(x)\Big)^2\Big] \lesssim h + \frac{1}{nh^{m/2}}.
\end{equation}
This concludes the proof.
\end{proof}

\subsection{Proof of Theorem~\ref{thm:clt}}
\begin{proof}[Proof of Theorem~\ref{thm:clt}]
First, we decompose $\hat\tau_w$ as the following:
\begin{align*}
\hat\tau_\omega(x)
= & \frac{1}{\sum_{i=1}^n K_{h, i}} \sum_{i=1}^n K_{h, i} \Big[\mu_1(X_i) - \mu_0(X_i)\Big] \\
& + \frac{1}{\sum_{i=1}^n K_{h, i}} \sum_{i=1}^n\left(2 D_i-1\right)K_{h, i} \left(D_i \left(1+\phi_1(X_i)\right) + (1-D_i) \left(1+\phi_0(X_i)\right)\right)\epsilon_i\\
& + \frac{1}{\sum_{i=1}^n K_{h, i}} \sum_{i=1}^n\left(2 D_i-1\right)\Big[\sum_{j:D_j=1-D_i} K_{h, j}w_{j\leftarrow 1} - K_{h, i}\Big(D_i \phi_1(X_i) + (1-D_i) \phi_0(X_i)\Big)\Big]\epsilon_i\\
& + \frac{1}{\sum_{i=1}^n K_{h, i}} \sum_{i=1}^n\left(2 D_i-1\right)K_{h, i} \Big[\sum_{j:D_j=1-D_i} w_{i\leftarrow j} \mu_{1-D_i}(X_i)-\sum_{j:D_j=1-D_i} w_{i\leftarrow j} \mu_{1-D_i}(X_j)\Big]\\
& - \frac{1}{\sum_{i=1}^n K_{h, i}} \sum_{i=1}^n\left(2 D_i-1\right)K_{h, i} \Big[\sum_{j:D_j=1-D_i} w_{i\leftarrow j} \hat{\mu}_{1-D_i}(X_i)-\sum_{j:D_j=1-D_i} w_{i\leftarrow j} \hat{\mu}_{1-D_i}(X_j)\Big]\\
=:& \bar{\tau}(\mX, x) + E_n(x) + \tilde{E}_n(x) + B_n(x) - \hat{B}_n(x) \yestag\label{eq:clt-4part},
\end{align*}
where under Assumption~\ref{asp:dr-propensity}\ref{asp:dr-propensity-2}, it is assumed that $\phi_1(x) = \frac{1-e(x)}{e(x)}$ and $\phi_0(x) = \frac{e(x)}{1-e(x)}$.


\begin{lemma}\label{lemma-clt:tau.bar}
Assume Assumptions~\ref{asp:causal}-\ref{asp:kernel}, \ref{asp:mse-mu} and \ref{asp:permutation} hold. Further, assume $n h^{m/2+1} \rightarrow 0$. We then have, for any interior point $x \in \supp(\zeta)$,
\begin{equation*}
    \sqrt{nh^{m/2}} \Big(\bar{\tau}(\mX, x)  - \tau(x)\Big) \stackrel{\sf p}{\longrightarrow} 0.
\end{equation*}
\end{lemma}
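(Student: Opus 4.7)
The plan is to establish the pointwise rate $|\bar{\tau}(\mX, x) - \tau(x)| \lesssim h^{1/2}$ by a direct Lipschitz argument, then scale by $\sqrt{nh^{m/2}}$ and invoke the hypothesis $nh^{m/2+1}\to 0$. Concretely, I would write
\[
\bar{\tau}(\mX, x) - \tau(x) = \frac{\sum_{i=1}^n K_{h,i}\,[\tau(X_i) - \tau(x)]}{\sum_{i=1}^n K_{h,i}}, \qquad \tau(\cdot) := \mu_1(\cdot) - \mu_0(\cdot),
\]
and note that Assumption~\ref{asp:mse-mu} makes $\tau$ Lipschitz on $\supp(\zeta)$, while Assumption~\ref{asp:kernel}\ref{asp:kernel1} forces $\|X_i - x\| \le h^{1/2}d_K$ for every index $i$ with $K_{h,i} > 0$. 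Consequently $|\tau(X_i) - \tau(x)| \lesssim h^{1/2}$ uniformly over those contributing indices, and pulling this uniform bound through the convex combination yields the deterministic estimate $|\bar{\tau}(\mX, x) - \tau(x)| \lesssim h^{1/2}$ on the event $\{\sum_i K_{h,i} > 0\}$.

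The event $\{\sum_i K_{h,i} > 0\}$ has probability tending to one via the kernel density computation already established in \eqref{eq:thm-dr:kde}. On this high-probability event,
\[
\sqrt{nh^{m/2}}\,\bigl|\bar{\tau}(\mX, x) - \tau(x)\bigr| \lesssim \sqrt{nh^{m/2+1}} \to 0,
\]
which furnishes convergence in probability to zero. Alternatively, one could simply appeal to Lemma~\ref{lemma-mse:tau.bar} to obtain $\E[(\bar{\tau} - \tau)^2] \lesssim h$ and conclude by Markov's inequality; however, Lemma~\ref{lemma-mse:tau.bar} additionally assumes Assumption~\ref{asp:mse-k}, which is \emph{not} part of the hypotheses of the present lemma, so the direct route above is a cleaner fit.

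I do not foresee any substantive obstacle. The only minor bookkeeping is confirming that the kernel denominator is positive with probability tending to one, which is immediate from the computation behind \eqref{eq:thm-dr:kde}, and noting that the extra hypothesis $nh^{m/2+1}\to 0$ is precisely what makes the $O(h^{1/2})$ approximation-error rate negligible at the central-limit scaling $(nh^{m/2})^{-1/2}$.
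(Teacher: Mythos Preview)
Your proposal is correct. The paper's own proof is the one-liner ``grounded in Lemma~\ref{lemma-mse:tau.bar} and \eqref{eq:thm-dr:kde}'', i.e.\ essentially the alternative you mention: an $L^2$ bound on the numerator combined with the denominator convergence \eqref{eq:thm-dr:kde} via Slutsky. Your direct route---bounding $|\tau(X_i)-\tau(x)|\lesssim h^{1/2}$ uniformly over indices with $K_{h,i}>0$ and reading off a deterministic $O(h^{1/2})$ bound on the convex combination---is genuinely more elementary: it uses only nonnegativity and bounded support of $K$, and delivers an almost-sure (not merely $L^2$) rate on the high-probability event. Your observation about Assumption~\ref{asp:mse-k} is sharp; the paper's citation of Lemma~\ref{lemma-mse:tau.bar} is slightly loose on hypotheses, but the pairing with \eqref{eq:thm-dr:kde} suggests the intended argument handles the denominator by Slutsky rather than by Lemma~\ref{lemma:inverse-kernel}, which is the only place \ref{asp:mse-k} enters that proof. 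Either way, your direct argument is the cleaner match to the stated hypotheses of the lemma.
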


\begin{lemma}\label{lemma-clt:en}
Assume Assumptions~\ref{asp:causal}-\ref{asp:kernel}, \ref{asp:mse-mu}, \ref{asp:clt-se}-\ref{asp:clt-sigma}, \ref{asp:permutation} and \ref{asp:dr-outcome}\ref{asp:dr-outcome-2} hold with either Assumption~\ref{asp:dr-propensity}\ref{asp:dr-propensity-2} or Assumption~\ref{asp:clt-phi} held. Further, assume $n h^{m/2+1} \rightarrow 0$. We then have, for any interior point $x \in \supp(\zeta)$,
    \begin{align*}
        \sqrt{nh^{m/2}} E_n(x) \stackrel{\sf d}{\longrightarrow} N(0,\Sigma(x)).
    \end{align*}
Here under Assumption~\ref{asp:dr-propensity}\ref{asp:dr-propensity-2}, 
$$
\Sigma(x)=\frac{1}{c_K g_x\left(\psi\left(x\right)\right)}\left(\frac{\sigma^2_1(x)}{e(x)} + \frac{\sigma^2_0(x)}{1-e(x)}\right)\int_{\mathbb{R}^m} K^2(\lVert t\rVert) \d t,
$$
and under Assumption~\ref{asp:clt-phi},
$$
\Sigma(x)=\frac{\tilde{\Sigma}(x)}{\Big[c_K g_x(\psi(x))\Big]^2},
$$
which, if $\phi_0(x)$ and $\phi_1(x)$ are Lipchitz functions that bounded and bounded away from zero, is 
$$
\Sigma(x)=\frac{1}{c_K g_x(\psi(x))}\Big( e(x)\left(1+\phi_1(x)\right)^2\sigma^2_1(x) + \left(1-e(x)\right)\left(1+\phi_0(x)\right)^2\sigma^2_0(x)\Big)\int_{\mathbb{R}^m} K^2(\lVert t\rVert) \d t.
$$
\end{lemma}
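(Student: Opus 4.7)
The plan is to reduce the problem to a conditional Lindeberg--Feller central limit theorem, and then remove the conditioning via dominated convergence. Write $a_i := (2D_i-1)K_{h,i}\left[D_i(1+\phi_1(X_i)) + (1-D_i)(1+\phi_0(X_i))\right]$, so that $E_n(x) = (\sum_i K_{h,i})^{-1}\sum_i a_i \epsilon_i$ with $\epsilon_i = Y_i-\mu_{D_i}(X_i)$. By Equation \eqref{eq:thm-dr:kde}, $h^{(d-m)/2}\cdot n^{-1}\sum_i K_{h,i} \stackrel{\sf p}{\to} c_K g_x(\psi(x))$, so by Slutsky's theorem it suffices to show
\[
  S_n := h^{(d-m)/2}\sqrt{nh^{m/2}}\cdot \frac{1}{n}\sum_{i=1}^n a_i \epsilon_i \;\stackrel{\sf d}{\longrightarrow}\; N\!\bigl(0,\tilde{\Sigma}(x)\bigr),
\]
and then invoke Lemma~\ref{lemma:fx_gphi} and the identity $\Sigma(x) = \tilde{\Sigma}(x)/[c_K g_x(\psi(x))]^2$.

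Conditionally on $(\mX,\mD)$, the summands $Z_{n,i} := h^{(d-m)/2}h^{m/4}a_i\epsilon_i/\sqrt{n}$ are independent, mean zero (since $\E[\epsilon_i\mid X_i,D_i]=0$), with conditional variance $h^{d-m+m/2}a_i^2\sigma_{D_i}^2(X_i)/n$. The first step is to show the conditional variance sum $s_n^2 = \sum_i \text{Var}(Z_{n,i}\mid\mX,\mD)$ concentrates around $\tilde{\Sigma}(x)$. Its mean is $h^{m/2-d}\E[K^2(\|h^{-1/2}(X_1-x)\|)\,[D_1(1+\phi_1)+(1-D_1)(1+\phi_0)]^2\sigma_{D_1}^2(X_1)]$, which by Assumption~\ref{asp:clt-phi} converges to $\tilde{\Sigma}(x)$. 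A straightforward variance bound gives $\text{Var}(s_n^2)\lesssim (nh^{m/2})^{-1}\to 0$ by Assumption~\ref{asp:kernel}\ref{asp:kernel2}, yielding $s_n^2 \stackrel{\sf p}{\to} \tilde{\Sigma}(x)$. The Lindeberg condition is then trivial: by Assumption~\ref{asp:causal}\ref{asp:causal-3} the residuals $|\epsilon_i|$ are uniformly bounded, and $|a_i|\lesssim h^{-d/2}$ on the support of $K$, so $\max_i|Z_{n,i}|\lesssim (nh^{m/2})^{-1/2}\to 0$ uniformly. Hence conditionally on $(\mX,\mD)$, $S_n\stackrel{\sf d}{\to} N(0,\tilde\Sigma(x))$ along almost every realization. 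Passing to the unconditional statement via bounded convergence on characteristic functions delivers the unconditional CLT.

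It remains to derive the explicit form of $\tilde{\Sigma}(x)$. Conditioning on $X_1$ in the defining expectation eliminates $D_1$, producing
\[
  h^{-m/2}\E\!\left[K^2(\|h^{-1/2}(X_1-x)\|)\,\Psi(X_1)\right]\;\longrightarrow\;\tilde{\Sigma}(x),
\]
with $\Psi(X_1) := e(X_1)(1+\phi_1(X_1))^2\sigma_1^2(X_1) + (1-e(X_1))(1+\phi_0(X_1))^2\sigma_0^2(X_1)$. Applying Lemma~\ref{lemma:manifoldfunc} with $Q=K^2$ and $\phi=\Psi$ (continuous and bounded on a neighborhood of $x$ by Assumptions~\ref{asp:causal}\ref{asp:causal-2}--\ref{asp:causal-3}, \ref{asp:clt-sigma}, and the hypotheses on $\phi_0,\phi_1$) gives $\tilde{\Sigma}(x)=\Psi(x)\,g_x(\psi(x))\int_{\R^m}K^2(\|t\|)\d t$. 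Specializing $\phi_1=(1-e)/e,\ \phi_0=e/(1-e)$ under Assumption~\ref{asp:dr-propensity}\ref{asp:dr-propensity-2} collapses $\Psi(x)$ to $\sigma_1^2(x)/e(x)+\sigma_0^2(x)/(1-e(x))$, producing the stated closed form. The main technical obstacle is the conditional variance concentration step, since $s_n^2$ involves $h^{-m}$-scale fluctuations and the argument must carefully exploit the manifold Lebesgue change-of-variables embedded in Lemma~\ref{lemma:manifold}; the Lindeberg condition and Slutsky assembly are then routine.
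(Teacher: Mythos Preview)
Your argument is correct, but the route differs from the paper's in one structural respect. The paper observes that, since $(X_i,D_i,Y_i)$ are i.i.d., the summands $E_{n,i}(x):=(2D_i-1)h^{(d-m)/2}K_{h,i}[D_i(1+\phi_1(X_i))+(1-D_i)(1+\phi_0(X_i))]\epsilon_i$ already form an \emph{unconditionally} i.i.d.\ triangular array with mean zero. It therefore applies Lyapunov's CLT directly: it computes the unconditional variance $s_n^2=n\,\E[E_{n,1}^2]$, verifies the Lyapunov condition $s_n^{-(2+k)}\sum_i\E|E_{n,i}|^{2+k}\lesssim (nh^{m/2})^{-k/2}\to 0$ using boundedness of $U_\omega$ and the overlap bound, and then divides by the kernel sum via Slutsky. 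Your conditional Lindeberg--Feller route---condition on $(\mX,\mD)$, show $s_n^2\stackrel{\sf p}{\to}\tilde\Sigma(x)$, verify a uniform Lindeberg bound, then de-condition through characteristic functions---is valid but introduces a layer the paper avoids; in particular your phrase ``along almost every realization'' is slightly loose, since you only established $s_n^2\stackrel{\sf p}{\to}\tilde\Sigma(x)$, and the clean way to close this is a subsequence argument or a direct martingale-array CLT. The variance identification step is identical in both proofs: condition out $D_1$ to obtain $\Psi(X_1)$, then apply Lemma~\ref{lemma:manifoldfunc} with $Q=K^2$ and specialize $\phi_\omega$ under Assumption~\ref{asp:dr-propensity}\ref{asp:dr-propensity-2}. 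The paper's approach buys simplicity; yours would generalize more readily if the summands were only conditionally independent.
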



\begin{lemma}\label{lemma-clt:en-t}
Assume Assumptions~\ref{asp:causal}-\ref{asp:kernel}, \ref{asp:permutation}, \ref{asp:dr-outcome}\ref{asp:dr-outcome-2} hold with either Assumption~\ref{asp:dr-propensity}\ref{asp:dr-propensity-2} or Assumption~\ref{asp:clt-phi} held. Further, assume $n h^{m/2+1} \rightarrow 0$. We then have, for any interior point $x \in \supp(\zeta)$,
    \begin{equation*}
        \sqrt{nh^{m/2}} \tilde{E}_n(x)\stackrel{\sf p}{\longrightarrow} 0.
    \end{equation*}
\end{lemma}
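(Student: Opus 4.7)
The plan is to show that $\tilde E_n(x)$ is a mean-zero, small-variance term after conditioning on the design $(\mX, \mD)$, and then pass the $\sqrt{nh^{m/2}}$ normalization through with Slutsky's theorem. Write
\[
\sqrt{nh^{m/2}}\,\tilde{E}_n(x)=\frac{\sqrt{nh^{m/2}}\,S_n}{\frac{h^{(d-m)/2}}{n}\sum_{i=1}^n K_{h,i}},\qquad S_n:=\frac{h^{(d-m)/2}}{n}\sum_{i=1}^n (2D_i-1)\,R_i\,\epsilon_i,
\]
where $R_i:=\sum_{j:D_j=1-D_i} K_{h,j}w_{j\leftarrow i}-K_{h,i}\bigl(D_i\phi_1(X_i)+(1-D_i)\phi_0(X_i)\bigr)$. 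The denominator converges in probability to $c_K g_x(\psi(x))>0$ by \eqref{eq:thm-dr:kde}, so by Slutsky's theorem the task reduces to proving $\sqrt{nh^{m/2}}\,S_n\stackrel{\sf p}{\to}0$.

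Next I would condition on $(\mX,\mD)$. By Assumption~\ref{asp:dr-outcome}\ref{asp:dr-outcome-2}, the smoothing weights $w_{j\leftarrow i}$ are built only from $[(X_i,D_i)]_{i=1}^n$, so each $R_i$ is $(\mX,\mD)$-measurable. The residuals $\epsilon_i=U_{D_i}(X_i)$ are then conditionally independent with $\E[\epsilon_i\mid\mX,\mD]=0$ and bounded conditional variance $\sigma^2_{D_i}(X_i)$ by Assumption~\ref{asp:causal}. Hence $\E[S_n\mid\mX,\mD]=0$ and
\[
\E[S_n^2\mid\mX,\mD]=\frac{h^{d-m}}{n^2}\sum_{i=1}^n R_i^2\,\sigma^2_{D_i}(X_i)\lesssim \frac{h^{d-m}}{n^2}\sum_{i=1}^n R_i^2.
\]
Taking the outer expectation and using that the units are i.i.d., this yields $\E[S_n^2]\lesssim h^{d-m}\,\E[R_1^2]/n$.

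The key moment bound now comes from the hypothesis: under Assumption~\ref{asp:clt-phi}, or equivalently under Assumption~\ref{asp:dr-propensity}\ref{asp:dr-propensity-2} via the single-term estimate \eqref{asp:dr-propensity-2-single}, one has $h^{d-m}\E[R_1^2]\lesssim h^{-m/2}(h^\epsilon+1/n)$. Plugging this in,
\[
nh^{m/2}\,\E[S_n^2]\lesssim h^\epsilon+\frac{1}{n}\longrightarrow 0,
\]
so Chebyshev's inequality gives $\sqrt{nh^{m/2}}\,S_n\stackrel{\sf p}{\to}0$, and the conclusion follows.

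The calculation itself is short; the only subtlety is the conditioning argument, which is where Assumption~\ref{asp:dr-outcome}\ref{asp:dr-outcome-2} plays an essential role. Without it, the weights could be correlated with the outcome residuals $\epsilon_i$, breaking both the conditional mean-zero property and the variance decomposition. I do not expect any serious obstacle beyond writing the conditioning carefully—the rest is a direct application of the moment hypothesis on $R_1$ together with Slutsky's theorem.
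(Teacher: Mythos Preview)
Your argument is correct and matches the paper's approach: both condition on $(\mX,\mD)$ to make $R_i$ measurable and $\epsilon_i$ mean-zero (via Assumption~\ref{asp:dr-outcome}\ref{asp:dr-outcome-2}), then invoke the single-term moment bound \eqref{asp:dr-propensity-2-single} (or its analogue in Assumption~\ref{asp:clt-phi}) to get $nh^{m/2}\E[S_n^2]\lesssim h^\epsilon+1/n\to 0$. The only cosmetic difference is that the paper splits off the constant $1/(c_Kg_x(\psi(x)))$ and bounds the denominator-error piece separately (referencing the analysis of \eqref{eq:mse2-1:result1}), whereas you apply Slutsky directly to the ratio; your route is slightly cleaner but substantively identical.
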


\begin{lemma}\label{lemma-clt:b}
Assume Assumptions~\ref{asp:causal}-\ref{asp:kernel}, \ref{asp:mse-mu}, \ref{asp:permutation} and \ref{asp:dr-outcome}\ref{asp:dr-outcome-2} hold with either Assumptions~\ref{asp:mse-muhat}-\ref{asp:mse-discrepancy} or Assumption~\ref{asp:mse-weight} held. Further, assume $n h^{m/2+1} \rightarrow 0$. We then have, for any interior point $x \in \supp(\zeta)$,
\begin{equation*}
    \sqrt{nh^{m/2}} \Big(B_n(x) - \hat{B}_n(x) \Big)\stackrel{\sf p}{\longrightarrow} 0.
\end{equation*}
\end{lemma}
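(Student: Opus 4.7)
The plan is to obtain Lemma \ref{lemma-clt:b} as an immediate corollary of the $L^2$ bound already established in Lemma \ref{lemma-mse:b}, together with the over-smoothing bandwidth hypothesis $nh^{m/2+1}\to 0$. First, I would observe the elementary identity
\begin{align*}
\E \Big[\Big(\sqrt{nh^{m/2}}\,\big(B_n(x)-\hat B_n(x)\big)\Big)^2\Big]
= nh^{m/2}\, \E \Big[\big(B_n(x)-\hat B_n(x)\big)^2\Big].
\end{align*}
Under either of the two clusters of regression-adjustment hypotheses (Assumptions \ref{asp:mse-muhat}-\ref{asp:mse-discrepancy}, or Assumption \ref{asp:mse-weight}), Lemma \ref{lemma-mse:b} supplies the estimate $\E[(B_n(x)-\hat B_n(x))^2] \lesssim h$. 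Combining the two bounds the display by $O(nh^{m/2+1})=o(1)$. A direct application of Markov's (Chebyshev's) inequality then yields $\sqrt{nh^{m/2}}(B_n(x)-\hat B_n(x))\stackrel{\sf p}{\longrightarrow} 0$, which is exactly the claim.

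Next, I would verify that the hypotheses listed in Lemma \ref{lemma-clt:b} are strong enough to invoke Lemma \ref{lemma-mse:b}. The two lemmas share the same dichotomy on the regression adjustment, and the remaining ``ambient'' conditions (Assumptions \ref{asp:causal}-\ref{asp:kernel}, \ref{asp:mse-mu}-\ref{asp:mse-k}, \ref{asp:permutation}-\ref{asp:wij}, \ref{asp:dr-outcome}\ref{asp:dr-outcome-2}) are inherited from the enclosing Theorem \ref{thm:clt}. In particular, the Lipschitz continuity of $\mu_\omega$ (Assumption \ref{asp:mse-mu}) and the kernel regularity (Assumption \ref{asp:mse-k}) are already in force, so the reduction is clean and no additional probabilistic estimate is required.

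The only mildly delicate point---really structural rather than analytic---is that $B_n-\hat B_n$ has been engineered so that the normalization $\sum_{j:D_j=1-D_i} w_{i\leftarrow j}=1$ from Assumption \ref{asp:permutation}\ref{asp:permutation-2} converts it into a pairwise telescoped quantity $w_{i\leftarrow j}\big[(\mu_{1-D_i}-\hat\mu_{1-D_i})(X_i)-(\mu_{1-D_i}-\hat\mu_{1-D_i})(X_j)\big]$. This telescoping is precisely what is exploited inside Lemma \ref{lemma-mse:b}, where Lipschitzness of $\hat\mu_\omega$ (Assumption \ref{asp:mse-muhat}) paired with the pairwise discrepancy bound (Assumption \ref{asp:mse-discrepancy})---or, in the alternative route, the uniform rate in Assumption \ref{asp:mse-weight} combined with Assumption \ref{asp:dr-outcome}\ref{asp:dr-outcome-3}---produces the $O(h)$ second-moment bound. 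Consequently, I anticipate no substantive obstacle: the heavy lifting is absorbed in Lemma \ref{lemma-mse:b}, and the CLT-level statement emerges from a single Markov step under the over-smoothing condition $nh^{m/2+1}\to 0$, the very condition introduced to kill exactly this bias-type remainder at the $\sqrt{nh^{m/2}}$ scale.
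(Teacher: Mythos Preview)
Your proposal is correct and matches the paper's approach: the paper's own proof is the one-liner ``grounded in Lemma~\ref{lemma-mse:b} and \eqref{eq:thm-dr:kde}'', i.e., exactly the $L^2$ bound from Lemma~\ref{lemma-mse:b} scaled by $nh^{m/2}$ together with the over-smoothing condition. The only nuance is the paper's additional reference to \eqref{eq:thm-dr:kde}: since Lemma~\ref{lemma-clt:b} as stated does not list Assumption~\ref{asp:mse-k} (needed by Lemma~\ref{lemma:inverse-kernel} inside the proof of Lemma~\ref{lemma-mse:b}), the paper presumably handles the random denominator $\sum_i K_{h,i}$ via the convergence-in-probability result \eqref{eq:thm-dr:kde} and Slutsky rather than via inverse-moment bounds---but you correctly observe that Assumption~\ref{asp:mse-k} (and \ref{asp:wij}) are in force in the enclosing Theorem~\ref{thm:clt}, so your direct invocation of Lemma~\ref{lemma-mse:b} is legitimate in context.
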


Combining Lemma~\ref{lemma-clt:tau.bar}-\ref{lemma-clt:b} and \eqref{eq:clt-4part}, we complete the proof.
\end{proof}

\section{Proofs in Section \ref{sec:theory}} \label{sec:pf-theory}

For random forests, the weights can be written as
\begin{equation} \label{eq:rf-w}
w_{i \leftarrow j}:=\frac{1}{B} \sum_{b=1}^B \frac{\ind\left(j \in \mathcal{I}_b^{1-D_i}: X_j \in L_b^{1-D_i}\left(X_i\right)\right)}{\Big\lvert\left\{k \in \mathcal{I}_b^{1-D_i}: X_k \in L_b^{1-D_i}\left(X_i\right)\right\}\Big\rvert}.
\end{equation}

\subsection{Proof of Theorem~\ref{thm:rf-consistency}}
\begin{proof}[Proof of Theorem~\ref{thm:rf-consistency}]

\begin{lemma} \label{lemma:rf-clt}
Assume Assumptions~\ref{asp:causal}-\ref{asp:rf1} hold,  with either Assumption~\ref{asp:rf-honest} or Assumption~\ref{asp:rf-extr-honest} held, then
for $\omega\in\{0,1\}$,
\begin{align*}
    & \Big\lVert\E\Big[\Big(\frac{s}{\lvert L^{\omega}(X_1)\rvert} \zeta_\omega(L^{\omega}(X_1) \cap\cM)-1\Big)^2\Biggiven \mD, D_1=\omega,X_1,1\in\cI^\omega\Big]\Big\rVert_\infty = O(h^{2\epsilon}),\\
    & \Big\lVert\E\Big[\Big(\frac{s}{\lvert L^{\omega}(X_1)\rvert} \zeta_{1-\omega}(L^{\omega}(X_1) \cap\cM)\Big)^2\Biggiven \mD, D_1=\omega,X_1,1\in\cI^\omega\Big]\Big\rVert_\infty = O(1).
\end{align*}
\end{lemma}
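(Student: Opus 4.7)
The plan is to condition on the random leaf $L^\omega(X_1)$ and analyze the conditional moments of the in-leaf count $N := \lvert L^\omega(X_1)\rvert$, then take the outer expectation using the leaf-size assumption that is appropriate to each scenario. Throughout, write $p_\omega := \zeta_\omega(L^\omega(X_1) \cap \cM)$ and $p_{1-\omega} := \zeta_{1-\omega}(L^\omega(X_1) \cap \cM)$, and note that $N \geq 1$ automatically since we condition on $1 \in \cI^\omega$.

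Under the extremely honest condition (Assumption~\ref{asp:rf-extr-honest}\ref{asp:rf-extr-honest-2}) the partition is independent of the data, so conditional on $L^\omega$ and $X_1$ the remaining $s-1$ members of $\cI^\omega$ are i.i.d.\ from $\zeta_\omega$, giving $N-1 \sim \operatorname{Bin}(s-1,p_\omega)$. Using the exact identity $\E[1/(1+B)] = [1-(1-p)^s]/(sp)$ for $B \sim \operatorname{Bin}(s-1,p)$ together with the second-inverse-moment bound $\E[1/((1+B)(2+B))] \leq 1/[s(s+1)p^2]$ (which controls $\E[1/(1+B)^2]$ up to a constant), and a Chernoff split into $\{N \geq sp_\omega/2\}$ versus its complement to handle the regime where $sp_\omega$ is not large, I would obtain the pointwise bound
\[
\E\bigl[(sp_\omega/N-1)^2 \bigm| L^\omega, X_1 \bigr] \;\lesssim\; \min\{1,\, (sp_\omega)^{-1}\}.
\]
Taking the outer expectation and invoking Assumption~\ref{asp:rf-extr-honest}\ref{asp:rf-extr-honest-1}, which bounds $\lVert \E[p_\omega^{-1} \mid \cdots]\rVert_\infty \lesssim s h^{2\epsilon}$, then yields the first claim since $\E[\min\{1,(sp_\omega)^{-1}\}] \leq \E[(sp_\omega)^{-1}] \lesssim h^{2\epsilon}$.

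Under the honest condition (Assumption~\ref{asp:rf-honest}) the partition depends on the $X_j$'s and the clean binomial structure is lost; the approach is instead to apply H\"older's inequality to $\E[(sp_\omega)^2/N^2]$ with exponents $(p,q)=(\beta/2,\beta/(\beta-2))$, pairing the diameter-driven bound $p_\omega \lesssim h^{m(1/2+\epsilon)}$ (obtained from Assumption~\ref{asp:rf1}\ref{asp:rf1-2}, Lemma~\ref{lemma:manifold}, and the bounded density of Assumption~\ref{asp:manifold}\ref{asp:manifold-f}) with the direct inverse-moment bound $\E[N^{-\beta} \mid \cdots] \lesssim s^{-2-\beta/2}(\log s)^{-\beta/2} h^{(2+\beta)\epsilon}$ from Assumption~\ref{asp:rf-honest}\ref{asp:rf-honest-1}. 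Arithmetic of exponents, for $\beta$ taken large enough in line with the constraints discussed after Theorem~\ref{thm:rf-mse}, then delivers the same $O(h^{2\epsilon})$ rate.

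For the second displayed claim, the overlap Assumption~\ref{asp:causal}\ref{asp:causal-2} combined with the bounded-density Assumption~\ref{asp:manifold}\ref{asp:manifold-f} implies that both $\d\zeta_\omega/\d\cH^m$ and $\d\zeta_{1-\omega}/\d\cH^m$ are bounded above and bounded away from zero on $\supp(\zeta)$, hence $p_{1-\omega} \asymp p_\omega$ uniformly. Consequently $(sp_{1-\omega}/N)^2 \leq C\,(sp_\omega/N)^2 \leq 2C\bigl[(sp_\omega/N-1)^2 + 1\bigr]$, and combining with the first claim yields the $O(1)$ bound. The main obstacle is the honest-case exponent accounting: $p_\omega$ and $N$ are both driven by the same data-dependent partition, so H\"older must be applied carefully and the $\beta$-dependent rates checked to land exactly at $h^{2\epsilon}$; the extremely honest case by comparison reduces to classical binomial inverse-moment calculations.
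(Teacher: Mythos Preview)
Your treatment of the extremely honest case (Assumption~\ref{asp:rf-extr-honest}) and of the second displayed claim is essentially correct and matches the paper: conditional on the data-independent partition, $N-1$ is binomial, inverse-moment bounds give the pointwise $\lesssim (sp_\omega)^{-1}$ estimate, and Assumption~\ref{asp:rf-extr-honest}\ref{asp:rf-extr-honest-1} closes the argument; for the second claim, the ratio $p_{1-\omega}/p_\omega$ is indeed uniformly bounded by overlap and bounded densities.

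The honest case (Assumption~\ref{asp:rf-honest}), however, has a genuine gap. You propose to bound $\E\bigl[(sp_\omega)^2/N^2\bigr]$ by H\"older, but this is the wrong target: the quantity $R:=sp_\omega/N$ concentrates around $1$, so $\E[R^2]$ is of order $1$, not $o(1)$, and an upper bound on $\E[R^2]$ says nothing about $\E[(R-1)^2]$ being $O(h^{2\epsilon})$. What is needed is a \emph{concentration} statement for $N/s$ around $p_\omega$, and since under mere honesty the leaf $L^\omega(X_1)$ is a function of all the $X_i$'s in $\cI^\omega$, $N$ is not binomial conditional on the leaf and no pointwise binomial calculation applies. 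The paper's argument instead splits on the event $\{|p_\omega - N/s| \le h^\epsilon N/s\}$: on this event $(R-1)^2\le h^{2\epsilon}$ trivially, while on its complement one uses the crude bound $(R-1)^2\le s^2$ together with a tail-probability estimate. That tail probability is controlled by (i) a \emph{uniform} empirical-process concentration inequality over the VC class of axis-aligned rectangles (this is the ``Part III in \cite{lin2022regression}'' step, giving $\P(\sup_L|\zeta_\omega(L)-\hat\zeta_\omega(L)|>2^{d+1}s^{-1/2}(\log s)^{1/2})\lesssim s^{-8}$), plus (ii) Markov's inequality with the $\beta$-th inverse moment from Assumption~\ref{asp:rf-honest}\ref{asp:rf-honest-1} to handle the small-leaf event $\{N<h^{-\epsilon}(s\log s)^{1/2}/2^{d+1}\}$. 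The uniform VC-type concentration is the missing idea in your proposal; without it, the data-dependence of the partition blocks any direct moment calculation for $N$.
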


\begin{lemma} [Theorem 5.1(i) in \cite{lin2022regression}] \label{lemma:rf-dr-permutation}
Assume Assumptions~\ref{asp:causal} and \ref{asp:kernel} hold. We then have Assumption
~\ref{asp:permutation} holds.
\end{lemma}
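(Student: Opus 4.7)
The statement reduces to verifying the two parts of Assumption~\ref{asp:permutation} for the explicit random forest weights in \eqref{eq:rf-w}. My plan is to treat the two parts separately: part (i) is a pure symmetry/relabeling argument about how the subsampling and tree-building procedure interacts with a permutation of the observations, while part (ii) is a direct computation that reduces the sum over $j$ to a telescoping identity, one tree at a time.

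For part (i), the plan is to argue that the entire causal forest construction is equivariant under relabeling of the units. Fix a permutation $\pi$. I would couple the two constructions: if under the original data the $b$-th subsample is $\cI_b^{1-D_i}$ drawn without replacement from $\{k: D_k = 1-D_i\}$, then under the permuted data I use the subsample $\pi^{-1}(\cI_b^{1-D_{\pi(i)}})$ (which has the same law because we sample without replacement uniformly from the treatment/control index sets, and $D_{\pi(i)} = D_i$ after relabeling). Under this coupling, the partitions generated by the two trees are literally the same subsets of $\cX$, so $L_b^{1-D_i}(X_i) = L_b^{1-D_{\pi(i)}}(X_{\pi(i)})$ as subsets of $\cM$. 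Then the indicator $\ind(j \in \cI_b^{1-D_i},\, X_j \in L_b^{1-D_i}(X_i))$ is matched by $\ind(\pi(j) \in \cI_b^{1-D_{\pi(i)}},\, X_{\pi(j)} \in L_b^{1-D_{\pi(i)}}(X_{\pi(i)}))$, and similarly the denominators match. Averaging over $b$ preserves the identity, giving $w_{i\leftarrow j} = w_{\pi(i)\leftarrow \pi(j)}^{\pi}$.

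For part (ii), I fix $i=1$ and compute tree-by-tree. For each $b$, let $N_b := |\{k \in \cI_b^{1-D_1}: X_k \in L_b^{1-D_1}(X_1)\}|$. Then
\[
\sum_{j:D_j = 1-D_1} \frac{\ind(j \in \cI_b^{1-D_1},\, X_j \in L_b^{1-D_1}(X_1))}{N_b} \;=\; \frac{N_b}{N_b} \;=\; 1,
\]
provided $N_b \geq 1$. Averaging over $b$ then yields $\sum_{j:D_j=1-D_1} w_{1\leftarrow j} = 1$.

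The one real subtlety I anticipate is the empty-leaf edge case where some $N_b = 0$, which would make the weight formula ill-defined. The standard remedy, and the one I would adopt here, is to observe that under Assumption~\ref{asp:rf1}--\ref{asp:rf-honest} (or the ``extremely honest'' variant in Assumption~\ref{asp:rf-extr-honest}), the tree is grown so that every leaf contains at least one sample from the relevant treatment arm in each subsample $\cI_b^{1-D_1}$ --- i.e., the splitting rule enforces $N_b \geq 1$ for every leaf; this is implicit in the diameter and leaf-size conditions. With that convention in place, the telescoping in the previous paragraph is exact and the computation goes through without further incident. The permutation argument in part (i) is the one place where care is required to state the coupling precisely, but it is otherwise routine.
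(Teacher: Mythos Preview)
Your proposal is correct. Note, however, that the paper does not actually prove this lemma: it simply invokes Theorem~5.1(i) of \cite{lin2022regression} as a black box. Your write-up therefore supplies the direct verification that the paper outsources, and the two parts you sketch---the relabeling/coupling argument for permutation equivariance, and the per-tree telescoping sum for the unit-sum property---are exactly what that verification amounts to.

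One small refinement: you appeal to Assumptions~\ref{asp:rf1}--\ref{asp:rf-extr-honest} to rule out empty leaves, but the lemma as stated only assumes Assumptions~\ref{asp:causal} and~\ref{asp:kernel}. The cleaner way to dispose of the $N_b=0$ edge case is to note that, by the construction in Section~\ref{sec:setup}, the tree $T_b^{1-D_1}$ is grown \emph{from} the subsample $\cI_b^{1-D_1}$ itself, so every terminal leaf of $T_b^{1-D_1}$ contains at least one point of $\cI_b^{1-D_1}$ by definition. In particular the leaf $L_b^{1-D_1}(X_1)$ is nonempty in $\cI_b^{1-D_1}$, and $N_b\ge 1$ automatically without invoking the later growth-rate assumptions.
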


\begin{lemma} \label{lemma:rf-dr-propensity-2-sum}
Assume Assumptions~\ref{asp:causal}-\ref{asp:rf1} hold, with either Assumption~\ref{asp:rf-honest} or Assumption~\ref{asp:rf-extr-honest} held. We then obtain \eqref{asp:dr-propensity-2-sum} in Assumption~\ref{asp:dr-propensity}\ref{asp:dr-propensity-2}.
\end{lemma}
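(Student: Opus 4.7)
Setting $\xi_i := \sum_{j:D_j=1-D_i} K_{h,j} w_{j\leftarrow i} - K_{h,i}\phi(X_i,D_i)$ with $\phi(x,\omega) := \omega(1-e(x))/e(x) + (1-\omega)e(x)/(1-e(x))$, the target is $h^{d-m}\E[(n^{-1}\sum_i\xi_i)^2]=o(1)$. Exchangeability of the random-forest weights (Lemma~\ref{lemma:rf-dr-permutation}) yields
$$
h^{d-m}\E\Big[\Big(n^{-1}\sum_{i=1}^n\xi_i\Big)^2\Big] = \frac{h^{d-m}}{n}\E[\xi_1^2] + \frac{(n-1)h^{d-m}}{n}\E[\xi_1\xi_2],
$$
so I would treat the diagonal term and the off-diagonal covariance separately.

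For the diagonal, I would rewrite $\sum_{j:D_j=1-D_1}K_{h,j}w_{j\leftarrow 1}=B^{-1}\sum_b \bar K_b$ where $\bar K_b$ is the mean of $K_{h,\cdot}$ over the opposite-treatment subsample in the leaf $L_b^{1-D_1}(X_1)$. Conditioning on $\mathbf{D}$, $X_1$, and (under Assumption~\ref{asp:rf-extr-honest}) the partition, the leaf-diameter bound $h^{1/2+\epsilon}$ in Assumption~\ref{asp:rf1}\ref{asp:rf1-2} combined with the Lipschitz property of $K$ replaces each in-leaf $K_{h,j}$ by $K_{h,1}$ up to error $O(h^{\epsilon-d/2})$. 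Lemmas~\ref{lemma:manifold}--\ref{lemma:manifoldfunc} convert the leaf counts (whose reciprocal moments are bounded by Lemma~\ref{lemma:rf-clt}) into manifold integrals against $g_{\omega,X_1}$, and the Radon--Nikodym ratio $g_{1-D_1}(X_1)/g_{D_1}(X_1)$ equals $\phi(X_1,D_1)$ by construction, giving $h^{d-m}\E[\xi_1^2]=O(h^{-m/2})$ and therefore $h^{d-m}\E[\xi_1^2]/n=o(1)$ by Assumption~\ref{asp:kernel}\ref{asp:kernel2}.

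The main obstacle is the off-diagonal covariance $h^{d-m}\E[\xi_1\xi_2]$, for which Cauchy--Schwarz recovers only the diagonal bound. My plan is to decompose $\xi_i=\bar\xi_i+\tilde\xi_i$ with $\bar\xi_i:=\E[\xi_i\mid X_i, D_i,\text{partition}]$. The propensity correction is precisely tuned so that the leading-order leaf integral cancels, and the local Lipschitz continuity of $g_{\omega,\cdot}$ (Assumption~\ref{asp:manifold}\ref{asp:conditional}) together with the smoothness of $e(\cdot)$ across a leaf of diameter $h^{1/2+\epsilon}$ produce a first-order Taylor residual of order $h^{1/2+\epsilon}$; this yields $|\bar\xi_i|\lesssim h^{(1+2\epsilon-d)/2}\mathbbm{1}(X_i\in\supp(K_h(\cdot-x)))$, and hence $h^{d-m}\E[\bar\xi_1\bar\xi_2]\lesssim h^{1+2\epsilon}=o(1)$ once the $O(h^{m/2})$ manifold volume of the kernel support is accounted for. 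The fluctuation covariance $\E[\tilde\xi_1\tilde\xi_2]$ vanishes whenever the two leaves $L^{1-D_1}(X_1)$ and $L^{1-D_2}(X_2)$ (over all subsamples) share no opposite-treatment point; overlap forces either $\|X_1-X_2\|\lesssim h^{1/2+\epsilon}$, a manifold event of probability $O(h^{m(1/2+\epsilon)})$, or two distinct leaves to share a sample, which is controllable by Lemma~\ref{lemma:rf-clt}.

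The hardest step will be discharging the honest (rather than extremely honest) case of Assumption~\ref{asp:rf-honest}, in which the partition itself depends on $\mathbf{X}$ and the conditional decoupling of $\tilde\xi_1$ from $\tilde\xi_2$ breaks down. There I would further condition on the full leaf structure $\{L_b^\omega\}$ and invoke the $\beta$-moment bound on inverse leaf counts from Assumption~\ref{asp:rf-honest}\ref{asp:rf-honest-1}, using H\"older's inequality with $\beta$ sufficiently large to separate reciprocal-count factors from kernel factors and restore the same $o(1)$ conclusion.
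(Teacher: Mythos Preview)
Your diagonal/off-diagonal decomposition via exchangeability is a natural idea, and it differs from the paper's route, which never splits into a variance--covariance pair. Instead, the paper decomposes the \emph{summand} itself (for, say, $D_i=1$) into seven pieces $S_1,\dots,S_7$---a kernel Lipschitz error, a boundary term, an empirical-vs-population leaf measure fluctuation, a density-ratio correction, a leaf-count ratio, a sample-proportion term, and a second boundary term---and then bounds $\E[S_k^2]$ for each $k$ directly on the full sum $\frac{1}{n}\sum_{D_i=1}$. The point of this finer functional split is that each $S_k$ has a structure (either pathwise small, or conditionally mean-zero with a tractable conditional variance) that makes the cross-terms in $\E[S_k^2]$ manageable via Lemma~\ref{lemma:rf-clt} and H\"older, without ever isolating a generic $\E[\xi_1\xi_2]$.

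There is a genuine gap in your off-diagonal step. First, a notational slip: the leaf relevant to $w_{j\leftarrow i}$ is $L_b^{D_i}(X_i)$ (the tree built on the $D_i$-group), not $L_b^{1-D_i}(X_i)$. More substantively, your claim that $\E[\tilde\xi_1\tilde\xi_2]$ vanishes absent a shared opposite-treatment point is not correct. Even when $D_1=D_2=1$ and $L_b^1(X_1)\neq L_b^1(X_2)$, the two fluctuations remain dependent: the denominators $|\{k\in\mathcal{I}_b^1:X_k\in L_b^1(X_i)\}|$ are counts of \emph{treated} subsample points in different leaves of the \emph{same} partition drawn from the \emph{same} subsample, and these are correlated. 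When $D_1\neq D_2$, the situation is worse: unit $1$ itself appears as an opposite-treatment point in the sum defining $\xi_2$ and vice versa, so $\tilde\xi_1$ and $\tilde\xi_2$ are coupled through the data in a way your leaf-overlap argument does not capture. You also omit the cross-term $\E[\bar\xi_1\tilde\xi_2]$; since $\tilde\xi_2$ is centered only given $(X_2,D_2,\text{partition})$ and $\bar\xi_1$ depends on $(X_1,D_1)$, which enters $\xi_2$, this term is not automatically zero.

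Finally, your bound $|\bar\xi_i|\lesssim h^{(1+2\epsilon-d)/2}\ind(\cdot)$ already requires essentially the entire content of the paper's $S_4$--$S_6$ analysis (density ratio, leaf-count concentration via Lemma~\ref{lemma:rf-clt}, and $n_0/n_1\to\P(D=0)/\P(D=1)$), plus a separate boundary treatment ($S_2,S_7$) for $X_i$ in the annulus where $K_{h,i}=0$ but the leaf straddles $\supp K_h(\cdot-x)$. Once you unpack all of that, you will have reproduced the paper's seven-term decomposition anyway, at which point it is cleaner to bound $\E[S_k^2]$ on the full sum directly rather than routing through $\E[\xi_1\xi_2]$.
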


\begin{lemma} \label{lemma:rf-dr-propensity-2-single}
Assume Assumptions~\ref{asp:causal}-\ref{asp:rf1} hold, with either Assumption~\ref{asp:rf-honest} or Assumption~\ref{asp:rf-extr-honest} held. We then obtain \eqref{asp:dr-propensity-2-single} in Assumption~\ref{asp:dr-propensity}\ref{asp:dr-propensity-2}.
\end{lemma}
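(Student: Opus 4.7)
Without loss of generality take $D_1=0$; the case $D_1=1$ is handled symmetrically. Substituting \eqref{eq:rf-w} and using the equivalence $X_1\in L_b^0(X_j)\iff X_j\in L_b^0(X_1)$, rewrite
\begin{align*}
\sum_{j:D_j=1} K_{h,j}\,w_{j\leftarrow 1} &= \frac{1}{B}\sum_{b=1}^B S_b, \\
S_b &:= \ind(1\in\cI_b^0)\,\frac{\sum_{j:D_j=1,\,X_j\in L_b^0(X_1)} K_{h,j}}{\bigl|\{k\in\cI_b^0:\,X_k\in L_b^0(X_1)\}\bigr|}.
\end{align*}
Let $T := K_{h,1}\cdot e(X_1)/(1-e(X_1))$. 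Since $\E[S_b T]$ is the same for every $b$ by bag exchangeability and $\E[(\tfrac{1}{B}\sum_b S_b)^2]\leq\E[S_1^2]$ by Jensen's inequality, we have $\E[(\tfrac{1}{B}\sum_b S_b - T)^2]\leq \E[(S_1-T)^2]$, reducing the proof to the single-tree quantity. Moreover, $K_{h,1}$ vanishes outside $\{\|X_1-x\|\leq h^{1/2}d_K\}$, so attention may be restricted to $X_1$ in this neighborhood.

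Write $L:=L^0(X_1)$, $N^\omega:=|\{j:D_j=\omega,\,X_j\in L\}|$ and $M:=|\{k\in\cI^0: X_k\in L\}|$; condition on $\mD$, $X_1$ and the partition (extreme honesty) or on its $\sigma$-algebra (honesty). The strategy is in two steps. First, by the Lipschitz continuity of $K$ (Assumption~\ref{asp:kernel}\ref{asp:kernel3}) combined with the leaf-diameter bound $\mathrm{diam}(L\cap\cM)\lesssim h^{1/2+\epsilon}$ (Assumption~\ref{asp:rf1}\ref{asp:rf1-2}), $|K_{h,j}-K_{h,1}|\lesssim h^{-d/2+\epsilon}$ whenever $X_j\in L$. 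Replacing $K_{h,j}$ by $K_{h,1}$ in the numerator of $S_1$ incurs an error bounded by $h^{-d/2+\epsilon}\cdot\ind(1\in\cI^0)\cdot N^1/M$; squaring, bounding $\E[(N^1/M)^2\mid\cdot]$ via Lemma~\ref{lemma:rf-clt} through the factorization $N^1/M=(N^1/[n\P(D=1)\zeta_1(L)])\cdot(s\zeta_1(L)/M)\cdot(n\P(D=1)/s)$, integrating $X_1$ over the kernel support (mass $\asymp h^{m/2}$), and multiplying by $h^{d-m}$ yields $O(h^{-m/2}h^{2\epsilon})$. Second, for the main term $\ind(1\in\cI^0)K_{h,1}\cdot N^1/M$, the same factorization combined with $\E[\ind(1\in\cI^0)\mid D_1=0,\mD]=s/N^0$, Bayes' rule, and the density expansion from Lemma~\ref{lemma:manifoldfunc} (applied to the Radon--Nikodym derivatives of $\zeta_\omega$ with respect to $\cH^m$) shows that the deterministic portion recovers $T$ up to an additive bias of order $h^{1/2+\epsilon}K_{h,1}$. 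Lemma~\ref{lemma:rf-clt} bounds the remaining random factors in $L^2$: $s\zeta_0(L)/M - 1$ has $L^2$-norm $O(h^\epsilon)$, and $N^1/[n\P(D=1)\zeta_1(L)] - 1$ has $L^2$-norm $O(1/\sqrt{n})$ by standard Binomial concentration. Multiplying by $K_{h,1}^2\lesssim h^{-d}$, integrating over $X_1$ ($h^{m/2}$), and scaling by $h^{d-m}$ produces the required $h^{-m/2}(h^\epsilon + 1/n)$.

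\textbf{Main obstacle.} The principal technical difficulty is the lower-tail control of $M$ (equivalently, of $|L|$ or of $\zeta_0(L)$), which sits in the denominator and must be tamed in $L^2$. Assumption~\ref{asp:rf-honest}\ref{asp:rf-honest-1} and the cleaner Assumption~\ref{asp:rf-extr-honest}\ref{asp:rf-extr-honest-1} are calibrated precisely so that, through Lemma~\ref{lemma:rf-clt}, the inverse moments of $M$ do not overwhelm the concentration gains. Stitching together the Lipschitz correction, the density expansion of Lemma~\ref{lemma:manifoldfunc}, and these inverse-moment bounds so that the exponents land exactly at $h^\epsilon + 1/n$---rather than a larger power of $h$ or a slower $n$ rate---is the central bookkeeping challenge; once these inverse moments are secured, the remainder is a routine combination of Lemmas~\ref{lemma:rf-clt}, \ref{lemma:manifold}, and \ref{lemma:manifoldfunc}.
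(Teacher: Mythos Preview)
Your overall strategy mirrors the paper's: a Lipschitz replacement $K_{h,j}\to K_{h,1}$ followed by a chain of approximations (empirical leaf count $\to$ leaf measure $\to$ density ratio $\to$ odds ratio), with Lemma~\ref{lemma:rf-clt} controlling the key ratio $s\zeta_0(L)/M$. Your single-tree reduction via Jensen and bag exchangeability is a clean simplification that the paper does not use; the paper instead carries $B^{-1}\sum_b$ throughout and splits each piece into a conditional-mean part and a conditional-variance part (the latter picking up a factor $B^{-1}$).

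There is, however, a genuine gap in Step~1. Assumption~\ref{asp:kernel}\ref{asp:kernel3} only asserts that $K$ is Lipschitz \emph{on} $\supp(K)$; for the box kernel, or any $K$ not vanishing at the boundary of its support, $K$ has a jump there. Consequently the bound $|K_{h,j}-K_{h,1}|\lesssim h^{-d/2+\epsilon}$ fails whenever exactly one of $X_1,X_j$ lies in $B(x,h^{1/2}d_K)$, and your claim that ``attention may be restricted to $X_1$ in this neighborhood'' is false for $S_1$, which can be nonzero when $X_1\notin B$ but some leaf-mate $X_j\in B$. The paper isolates these boundary configurations as separate terms ($T_2$ and $T_7$ in its seven-term decomposition), each shown to be $O(h^{-m/2+\epsilon})$ via an annulus argument: because $\mathrm{diam}(L\cap\cM)\lesssim h^{1/2+\epsilon}$, the offending $X_1$ (respectively $X_j$) must lie in a shell of width $O(h^{1/2+\epsilon})$ around $\partial B$, whose $\zeta$-mass is $O(h^{m/2+\epsilon})$. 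You need to add this step.

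Two smaller slips: (i) $\E[\ind(1\in\cI^0)\mid D_1=0,\mD]=s/n_0$, not $s/N^0$; (ii) the factorization you wrote uses $s\zeta_1(L)/M$, but the factor controlled by Lemma~\ref{lemma:rf-clt} at rate $h^\epsilon$ is $s\zeta_0(L)/M-1$ (you correct this later in the text, so the written factorization is missing a density-ratio factor $\zeta_1(L)/\zeta_0(L)$); (iii) the Binomial fluctuation $N^1/[n_1\zeta_1(L)]-1$ has conditional $L^2$-norm $O\bigl((n_1\zeta_1(L))^{-1/2}\bigr)$, not $O(n^{-1/2})$. Point~(iii) still yields the target $h^{-m/2}/n$ after multiplying by $K_{h,1}^2$, the $O(1)$ factor from Lemma~\ref{lemma:rf-clt}, and $\ind(1\in\cI^0)(n_1/s)^2$ and then integrating --- but the bookkeeping must pass through $\zeta_1(L)$, exactly as in the paper's handling of its $T_3$ term.
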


In \eqref{eq:causalf} setting $\hat\mu_0=\hat\mu_1=0$, Lemma~\ref{lemma:rf-dr-permutation}-\ref{lemma:rf-dr-propensity-2-single} and Theorem~\ref{thm:dr} yields Theorem~\ref{thm:rf-consistency}.
\end{proof}

\subsection{Proof of Theorem~\ref{thm:rf-mse}}
\begin{proof}[Proof of Theorem~\ref{thm:rf-mse}]

\begin{lemma} \label{lemma:rf-mse-discrepancy}
    Assume Assumptions~\ref{asp:causal}-\ref{asp:rf1} hold. We then have Assumption~\ref{asp:mse-discrepancy} hold.
\end{lemma}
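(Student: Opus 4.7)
The plan is to exploit two structural features of the random-forest weights that follow directly from the construction in \eqref{eq:rf-w}. First, the weights are manifestly non-negative, and Lemma~\ref{lemma:rf-dr-permutation} (which verifies Assumption~\ref{asp:permutation}\ref{asp:permutation-2}) forces $\sum_{j:D_j = 1-D_i} w_{i\leftarrow j} = 1$, so that $\sum_{j:D_j = 1-D_i} |w_{i\leftarrow j}| = 1$ almost surely. Second, whenever $w_{i \leftarrow j} > 0$, some tree $T_b^{1-D_i}$ places $X_j$ in the same leaf as $X_i$, and Assumption~\ref{asp:rf1}\ref{asp:rf1-2} bounds the diameter of that leaf (intersected with $\cM$) by $h^{1/2+\epsilon}$ uniformly. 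Combining these two observations yields the deterministic, $i$-independent bound
\begin{equation*}
\sum_{j:D_j = 1-D_i} |w_{i\leftarrow j}|\,\|X_i - X_j\| \lesssim h^{1/2+\epsilon}.
\end{equation*}
Substituting this back reduces Assumption~\ref{asp:mse-discrepancy} to verifying $\E[S_n^{2\gamma}] = O(1)$ for some $\gamma > 1$, where $S_n := n^{-1} \sum_{i=1}^n h^{-m/2} K(\|h^{-1/2}(X_i-x)\|)$; the left-hand side of Assumption~\ref{asp:mse-discrepancy} is then at most $h^{(1+2\epsilon)\gamma}\,\E[S_n^{2\gamma}]$, which is dominated by $h^\gamma$ once $h$ is small.

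To bound $\E[S_n^{2\gamma}]$ I would apply a Rosenthal-type moment inequality to the i.i.d.\ sum. Writing $Z_i := h^{-m/2} K(\|h^{-1/2}(X_i - x)\|)$, Lemma~\ref{lemma:manifold} gives $\E Z_1 \lesssim 1$, and since $K$ is bounded with $Z_1 \leq \|K\|_\infty h^{-m/2}$, the trivial pointwise bound $Z_1^p \leq \|K\|_\infty^{p-1}\, h^{-m(p-1)/2}\, Z_1$ yields $\E Z_1^p \lesssim h^{-m(p-1)/2}$ for every $p \geq 1$. Rosenthal's inequality with $p = 2\gamma$ then produces
\begin{equation*}
\E \left|S_n - \E Z_1\right|^{2\gamma} \lesssim (n h^{m/2})^{1-2\gamma} + (n h^{m/2})^{-\gamma},
\end{equation*}
and both summands vanish by Assumption~\ref{asp:kernel}\ref{asp:kernel2}, which ensures $n h^{m/2} \to \infty$. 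Hence $\E[S_n^{2\gamma}] \lesssim (\E Z_1)^{2\gamma} + \E|S_n - \E Z_1|^{2\gamma} = O(1)$ for any $\gamma > 1$.

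I do not anticipate any serious obstacle here; the only point that requires care is to confirm that the leaf-diameter control is genuinely uniform in $i$, in the tree index $b$, and in the subsampling randomness, which is exactly what the $L^\infty$ norm in Assumption~\ref{asp:rf1}\ref{asp:rf1-2} delivers. Choosing, for instance, $\gamma = 2$ then completes the verification of Assumption~\ref{asp:mse-discrepancy}.
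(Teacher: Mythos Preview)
Your proposal is correct and follows essentially the same route as the paper: exploit non-negativity and the sum-to-one property of the forest weights to collapse $\sum_j |w_{i\leftarrow j}|\,\|X_i-X_j\|$ to the uniform leaf-diameter bound $h^{1/2+\epsilon}$ from Assumption~\ref{asp:rf1}\ref{asp:rf1-2}, and then control the remaining $2\gamma$-th moment of $S_n=\tfrac1n\sum_i h^{-m/2}K(\|h^{-1/2}(X_i-x)\|)$. The only difference is cosmetic: the paper simply asserts $\E[S_n^{2\gamma}]\lesssim 1$ (implicitly relying on the multinomial-expansion moment bound established earlier in \eqref{eq:mse-kernel-center-moment}), whereas you spell this step out via Rosenthal's inequality, which is a perfectly valid and arguably cleaner alternative.
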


In \eqref{eq:causalf} setting $\hat\mu_0=\hat\mu_1=0$, Lemmas~\ref{lemma:rf-dr-permutation}-\ref{lemma:rf-mse-discrepancy} and Theorem~\ref{thm:mse} yields Theorem~\ref{thm:rf-mse}.
\end{proof}

\subsection{Proof of Theorem~\ref{thm:rf-clt}}
\begin{proof}[Proof of Theorem~\ref{thm:rf-clt}]
In \eqref{eq:causalf} setting $\hat\mu_0=\hat\mu_1=0$, Lemmas~\ref{lemma:rf-dr-permutation}-\ref{lemma:rf-mse-discrepancy} and Theorem~\ref{thm:clt} yields Theorem~\ref{thm:rf-clt}.
\end{proof}

\section{Proofs of lemmas in Section \ref{sec:key-lemma}}

\subsection{Proof of Lemma~\ref{lemma:fx_gphi}}
\begin{proof}[Proof of Lemma~\ref{lemma:fx_gphi}]
For any set $A\subseteq \supp(\zeta)$,
\begin{align*}
    \int_A f(x) \d \cH^m(x) = \int_A 1 \d \zeta(x) = \int_{\psi(A)} 1 \d \left(\psi_* \zeta\right) = \int_{\psi(A)} g_x(z) \d \lambda(z).
\end{align*}
Thus, for any continuous point $x \in \supp(\zeta)$,
\begin{align*}
f(x) & = \lim _{r \rightarrow 0} \frac{1}{\cH^m\Big(B\left(x,r\right) \cap \supp(\zeta)\Big)} \int_{B\left(x,r\right) \cap \supp(\zeta)} f(x) \d \cH^m(x) \\
~~{\rm and}~~g_x\left(\psi(x)\right) & = \lim _{r \rightarrow 0} \frac{1}{\lambda\Big(B\left(x,r\right) \cap \supp(\zeta)\Big)} \int_{\psi\left(B\left(x,r\right) \cap \supp(\zeta)\right)} g_x(z) \d \lambda(z).
\end{align*}
By Lemma~\ref{lemma:gbound}, $f$ and $g_x$ are bounded and bounded away from zero on their supports, and the property of Hausdorff and Lebesgue measures \citep[Theorem 2.5]{gariepy2015measure} yields that
\begin{align*}
\frac{f(x)}{g_x\left(\psi(x)\right)} = \lim _{r \rightarrow 0} \frac{\lambda\Big(\psi\left(B\left(x,r\right) \cap \supp(\zeta)\right)\Big)}{\cH^m\Big(B\left(x,r\right) \cap \supp(\zeta)\Big)}\frac{\int_{B\left(x,r\right) \cap \supp(\zeta)} f(x) \d \cH^m(x)}{\int_{\psi\left(B\left(x,r\right) \cap \supp(\zeta)\right)} g_x(z) \d \lambda(z)} 
= 1,
\end{align*}
where the last step can be achieved by changing the probability measure in  \citet[Lemma 2.2]{han2022azadkia} to Hausdorff measure.
\end{proof}

\subsection{Proof of Lemma~\ref{lemma:thetafunc}}
\begin{proof}[Proof of Lemma~\ref{lemma:thetafunc}]
    This is a direct result from the proof of Lemma 3.1 in \cite{han2022azadkia} and by noticing that continuous differentiable function is locally Lipschitz.
\end{proof}

\subsection{Proof of Lemma~\ref{lemma:manifold}}
\begin{proof}[Proof of Lemma~\ref{lemma:manifold}]
Let $d_{Q}$ be the diameter of $\supp(Q)$ and $B(x, r)$ be the ball with center $x$  and radius $r$.

When $h$ is sufficiently small such that $B \left(x, h^{1/2}d_{Q}\right) \subset U$,
\begin{align*}
& \int_U Q\left( h^{-1/2} \lVert x_i-x\rVert \right) \d \zeta(x_i) = \int_{B \left(x, h^{1/2}d_{Q}\right)} Q\left( h^{-1/2} \lVert x_i-x\rVert \right) \d \zeta(x_i) \\
= & \int_{\psi\left(B \left(x, h^{1/2}d_{Q}\right)\cap \cM\right)} Q\left( h^{-1/2} \lVert \psi^{-1}(z_i)-\psi^{-1}(z)\rVert \right) \d \left(\psi_* \zeta\right)\\
= & \int_{\psi\left(B \left(x, h^{1/2}d_{Q}\right)\cap \cM\right)} {Q}\left( h^{-1/2} \lVert x_i-x\rVert \right) g_x\left(z_i\right)\d\lambda(z_i).\yestag\label{eq:lemma-manifold-eq}
\end{align*}

We first claim that there exists $c_x^{\prime}>0$ such that when $h$ is sufficiently small,
\begin{equation} \label{eq:lemma-manifold-conatin}
    B \left(\psi(x), (1-c^{\prime}_xh^{1/2}d_{Q})h^{1/2}d_{Q}\right) \subset \psi\left(B \left(x, h^{1/2}d_{Q}\right)\cap \cM\right) \subset B \left(\psi(x), h^{1/2}d_{Q}\right) \subset V.
\end{equation}

The second ``$\subset$'' of \eqref{eq:lemma-manifold-conatin} can be obtained by the property of orthogonal projection:
$$
\lVert \psi(x)-\psi\left(x^\prime\right)\rVert  \leq\lVert x-x^\prime\rVert.
$$

As for the first ``$\subset$'' in \eqref{eq:lemma-manifold-conatin}, we use the result from Lemma~\ref{lemma:thetafunc}. In detail, since $g$ and $\psi^{-1}$ are locally Lipschitz continuous ($\psi^{-1}$ is smooth and thus locally Lipschitz continuous), for every $z^{\prime} \in B\left(\psi(x), h^{1/2} d_Q\right) \backslash \{\psi (x)\}$,
\begin{align*}
& \frac{\lVert \psi^{-1}(z^{\prime})-x\rVert}{\lVert z^{\prime}-\psi\left(x\right)\rVert}-1 =  \frac{1}{\cos\left(\theta_x\left(\psi^{-1}(z^{\prime})\right)\right)} -1 \lesssim \lVert\psi^{-1}(z^{\prime})-x\rVert \lesssim \lVert z^{\prime}-\psi\left(x\right)\rVert
\leq h^{1/2} d_Q.
\end{align*}
Thus for every $z^{\prime} \in B\left(\psi(x),\left(1-c_x^{\prime} h^{1/2} d_Q\right) h^{1/2} d_Q\right) \backslash \{\psi (x)\}$, there exists $c_x^{\prime}>0$ such that
$$
\lVert\psi^{-1}(z^{\prime})-x\rVert \leq\left(1+c^{\prime}_x h^{1/2} d_Q\right)\lVert z^{\prime}-\psi\left(x\right)\rVert \leq h^{1/2}d_{Q}.
$$
Therefore,
$$
\psi^{-1}\left(B\left(\psi(x),\left(1-c_x^{\prime} h^{1/2} d_Q\right) h^{1/2} d_Q\right)\right) \subset B \left(x, h^{1/2}d_{Q}\right)\cap \cM.
$$
Applying the mapping $\psi$ to both sides and repeating the process for $x$ and $z$, we obtain the first ``$\subset$'' of Equation~\eqref{eq:lemma-manifold-conatin}.

With \eqref{eq:lemma-manifold-eq}, to prove \eqref{eq:lemma-manifold-result} we only need to show that
\begin{equation} \label{eq:lemma-manifold-result-1}
\Big\lvert 1- \frac{\int_{B \left(x, h^{1/2}d_{Q}\right)} Q\left( h^{-1/2} \lVert x_i-x\rVert \right) \d \zeta(x_i)}{\int_{\psi\left(B \left(x, h^{1/2}d_{Q}\right)\cap \cM\right)} {Q}\left( h^{-1/2} \lVert z_i-z\rVert \right) g_x\left(z_i\right)\d\lambda(z_i)}\Big\rvert \lesssim h^{1/2}d_Q
\end{equation}
and
\begin{equation} \label{eq:lemma-manifold-result-2}
\Big\lvert 1- \frac{\int_{\psi\left(B \left(x, h^{1/2}d_{Q}\right)\cap \cM\right)} {Q}\left( h^{-1/2} \lVert z_i-z\rVert \right) g_x\left(z_i\right)\d\lambda(z_i)}{\int_{B \left(z, h^{1/2}d_{Q}\right)} Q\left( h^{-1/2} \lVert z_i-z\rVert \right) g_x\left(z_i\right) \d\lambda(z_i)}\Big\rvert \lesssim h^{1/2}d_Q.
\end{equation}

We first prove \eqref{eq:lemma-manifold-result-2}. When $h$ is sufficiently small, by \eqref{eq:lemma-manifold-conatin},
\begin{align*}
& h^{-m/2} \Big\lvert \int_{\psi\left(B \left(x, h^{1/2}d_{Q}\right)\cap \cM\right)} {Q}\left( h^{-1/2} \lVert z_i-z\rVert \right) g_x\left(z_i\right)\d\lambda(z_i) - \int_{B \left(z, h^{1/2}d_{Q}\right)} Q\left( h^{-1/2} \lVert z_i-z\rVert \right) g_x\left(z_i\right) \d\lambda(z_i) \Big\rvert \\
\lesssim & h^{-m/2} \lambda\Big({B \left(z, h^{1/2}d_{Q}\right) \backslash \psi\left(B \left(x, h^{1/2}d_{Q}\right)\cap \cM\right) }\Big)\\
\leq & h^{-m/2} \Big[\lambda\Big(B \left(z, h^{1/2}d_{Q}\right)\Big) - \lambda\Big(B \left(z, (1-c_x^{\prime} h^{1/2}d_Q)h^{1/2}d_{Q}\right)\Big)\Big] \\
= & h^{-m/2} \left(1-\left(1-c_x^{\prime} h^{1/2}d_Q\right)^m\right)\left(h^{1/2}d_{Q}\right)^m U_m 
\lesssim m c_x^{\prime} h^{1/2} d_Q,
\end{align*}
where $U_m$ is the volume of ball $B(0,1) \subset \mathbb{R}^m$. And applying Assumption~\ref{asp-lemma:manifold-3}, we obtain \eqref{eq:lemma-manifold-result-2}.

To prove \eqref{eq:lemma-manifold-result-1}, by Assumption~\ref{asp-lemma:manifold-1} we notice that when $h$ is sufficiently small
\begin{align*}
& h^{-m/2} \Big\lvert \int_{\psi\left(B \left(x, h^{1/2}d_{Q}\right)\cap \cM\right)} {Q}\left( h^{-1/2} \lVert x_i-x\rVert \right) g_x\left(z_i\right)\d\lambda(z_i) \\
& - \int_{\psi\left(B \left(x, h^{1/2}d_{Q}\right)\cap \cM\right)} {Q}\left( h^{-1/2} \lVert z_i-z\rVert \right) g_x\left(z_i\right)\d\lambda(z_i) \Big\rvert\\
\lesssim & h^{-m/2} \int_{\psi\left(B \left(x, h^{1/2}d_{Q}\right)\cap \cM\right)} h^{-1/2} \Big\lvert \lVert x_i-x\rVert - \lVert z_i-z\rVert \Big\rvert \d\lambda(z_i) \\
\lesssim &  h^{-m/2} \int_{\psi\left(B \left(x, h^{1/2}d_{Q}\right)\cap \cM\right)} \frac{\Big\lvert \lVert x_i-x\rVert - \lVert z_i-z\rVert \Big\rvert}{\lVert x_i-x\rVert} \d\lambda(z_i) \\
= & h^{-m/2} \int_{\psi\left(B \left(x, h^{1/2}d_{Q}\right)\cap \cM\right)} 1-\cos\left(\theta_x\left(x_i\right)\right) \d\lambda(z_i)\\
\lesssim & h^{-m/2} \lambda\Big(B\left(z, h^{1/2} d_Q\right)\Big) h^{1/2} d_Q 
\lesssim h^{1/2} d_Q,
\end{align*}
where the last step is by Lemma~\ref{lemma:thetafunc}. Combining the result with \eqref{eq:lemma-manifold-conatin} and \eqref{eq:lemma-manifold-eq}, we finished proving \eqref{eq:lemma-manifold-result-1}.
\end{proof}

\subsection{Proof of Lemma~\ref{lemma:manifoldfunc}}
\begin{proof} [Proof of Lemma~\ref{lemma:manifoldfunc}]
Since $\phi$ is continuous, for any $\varepsilon>0$, there exists $\delta_x>0$ such that when $0<h<\delta_x$,
\begin{align*}
& h^{-m/2} \Big\lvert \int_{B \left(x, h^{1/2}d_{Q}\right)} Q\left( h^{-1/2} \lVert x_i-x\rVert \right) \phi(x_i) \d \zeta(x_i) - \int_{B \left(x, h^{1/2}d_{Q}\right)} Q\left( h^{-1/2} \lVert x_i-x\rVert \right) \phi(x) \d \zeta(x_i) \Big\rvert \\
\leq & h^{-m/2}  \int_{B \left(x, h^{1/2}d_{Q}\right)} Q\left( h^{-1/2} \lVert x_i-x\rVert \right) \Big\lvert \phi(x_i) - \phi(x) \Big\rvert\d \zeta(x_i) \\
\leq & \varepsilon h^{-m/2}  \int_{B \left(x, h^{1/2}d_{Q}\right)} Q\left( h^{-1/2} \lVert x_i-x\rVert \right) \d \zeta(x_i) \lesssim \varepsilon, \yestag\label{eq:lemma-manifoldfunc-diff}
\end{align*}
where the last step is by Assumption~\ref{asp-lemma:manifold-3} and Lemma~\ref{lemma:manifold}.

And if $\phi$ is (locally) Lipschitz, we have
\begin{equation} \label{eq:lemma-manifoldfunc-lipdiff}
h^{-m/2}\left|\int_{B\left(x, h^{1/2} d_Q\right)} Q\left(h^{-1/2}\lVert x_i-x\rVert \right) \phi\left(x_i\right) \mathrm{d} \zeta\left(x_i\right)-\int_{B\left(x, h^{1/2} d_Q\right)} Q\left(h^{-1/2}\lVert x_i-x\rVert \right) \phi(x) \mathrm{d} \zeta\left(x_i\right)\right| \lesssim h^{1/2}d_Q.
\end{equation}

Furthermore, when $\phi(x)>0$, by Assumption~\ref{asp-lemma:manifold-3} and Lemma~\ref{lemma:manifold}, 
\begin{equation*}
\Big\lvert 1- \frac{\phi(x)\int_{B \left(x, h^{1/2}d_{Q}\right)} Q\left( h^{-1/2} \lVert x_i-x\rVert \right) \d \zeta(x_i)}{\phi(x)\int_{B \left(\psi(x), h^{1/2}d_{Q}\right)} Q\left( h^{-1/2} \lVert z_i-\psi(x)\rVert \right) g_x\left(z_i\right) \d\lambda(z_i)} \Big\rvert \lesssim h^{1/2}d_Q.
\end{equation*}
Combining the above with \eqref{eq:lemma-manifoldfunc-diff} or \eqref{eq:lemma-manifoldfunc-lipdiff} leads to \eqref{lemma:manifoldfunc-1} and \eqref{lemma:manifoldfunc-1-h}.

And when $\phi(x)=0$, 
$$
h^{-m/2} \phi(x) \int_{B \left(z, h^{1/2}d_{Q}\right)} Q\left( h^{-1/2} \lVert z_i-z\rVert \right) g_x\left(z_i\right) \d\lambda(z_i)=0
$$
implies \eqref{lemma:manifoldfunc-2} and \eqref{lemma:manifoldfunc-2-h}.
\end{proof}

\section{Proofs of lemmas in Section \ref{sec:pf-framework}}

\subsection{Proof of Lemma~\ref{lemma:inverse-kernel}}

\begin{proof}[Proof of Lemma~\ref{lemma:inverse-kernel}]
First, we notice that for any given positive integer $k$,
\begin{align*}
& \E\Big[\Big\{\frac{1}{n} \sum_{i=1}^n h^{-m/2}K\Big(h^{-1/2}\lVert X_i-x\rVert\Big) - \E\Big[h^{-m/2}K\Big(h^{-1/2}\lVert X_i-x\rVert\Big)\Big] \Big\}^{2k} \Big] \\
= & \Big(\frac{1}{nh^{m/2}}\Big)^{2k}\E\Big[\sum_{\substack{0 \leq l_1, l_2, \ldots, l_n \leq k \\
l_1+l_2+\ldots+l_n=k}}\left(\begin{array}{c}
2k \\
2l_1, 2l_2, \ldots, 2l_n
\end{array}\right) \prod_{i=1}^n \Big\{K\Big(h^{-1/2}\lVert X_i-x\rVert\Big) - \E\Big[K\Big(h^{-1/2}\lVert X_i-x\rVert\Big)\Big]\Big\}^{2l_i} \Big] \\
\lesssim & \Big(\frac{1}{nh^{m/2}}\Big)^{2k}\sum_{\substack{0 \leq l_1, l_2, \ldots, l_n \leq k \\
l_1+l_2+\ldots+l_k=k}}\left(\begin{array}{c}
2k \\
2l_1, 2l_2, \ldots, 2l_n
\end{array}\right) h^{-\frac{m}{2}(\sum_{i=1}^n \ind(l_i>0))}
\lesssim \Big(\frac{1}{nh^{m/2}}\Big)^k. \yestag\label{eq:mse-kernel-center-moment}
\end{align*}

Therefore, applying the Minkowski inequality with \eqref{eq:mse-kernelexp} and \eqref{eq:mse-kernel-diff}, we obtain that for any given positive integer $k$,
\begin{align*}
& \Big\{\E\Big[\Big(c_K g_x\left(z\right) - \frac{1}{n}h^{-m/2} \sum_{i=1}^n K\Big(h^{-1/2}\lVert X_i-x\rVert\Big)\Big)^{2k}\Big]\Big\}^{1/2k}\\
\leq & \Big\{\E\Big[\Big\{c_K g_x\left(z\right) - \E\Big[h^{-m/2} K\Big(h^{-1/2}\lVert X_1-x\rVert\Big)\Big]\Big\}^{2k}\Big]\Big\}^{1/2k} \\
& + \Big\{\E\Big[\Big\{\frac{1}{n}h^{-m/2} \sum_{i=1}^n K\Big(h^{-1/2}\lVert X_i-x\rVert\Big) - \E\Big[h^{-m/2} K\Big(h^{-1/2}\lVert X_i-x\rVert\Big)\Big] \Big\}^{2k}\Big]\Big\}^{1/2k}\\
\lesssim & h^{1/2} + \Big(\frac{1}{nh^{m/2}}\Big)^{1/2}. \yestag\label{eq:mse-kernel-diff}
\end{align*}

Notably, through Assumption~\ref{asp:mse-k},
\begin{align*}
    & \E\Big[\Big\lvert\frac{1}{\frac{1}{n}\sum_{i=1}^n h^{\frac{d-m}{2}} K_{h, i}}\Big\rvert^\alpha\Big] 
    = \left(nh^{m/2}\right)^\alpha \E\Big[\Big\lvert\frac{1}{\sum_{i=1}^n K\left(h^{-1/2}\lVert X_i-x\rVert\right)}\Big\rvert^\alpha\Big] \\
    = & \left(nh^{m/2}\right)^\alpha \E\Big[\Big\lvert\frac{1}{K\left(h^{-1/2}\lVert X_{i_0}-x\rVert\right) + \sum_{i\neq i_0} K\left(h^{-1/2}\lVert X_i-x\rVert\right)}\Big\rvert^\alpha\Big] \\
    \lesssim & \left(nh^{m/2}\right)^\alpha \E\Big[\Big\lvert\frac{1}{1 + \sum_{i\neq i_0} \ind\left(h^{-1/2}\lVert X_i-x\rVert \in \supp(K)\right)}\Big\rvert^\alpha\Big],
\end{align*}
where $[\ind\left(h^{-1/2}\lVert X_i-x\rVert\in \supp(K)\right)]_{i \neq i_0}$, with $\ind(\cdot)$ representing the indicator function, are independent with Bernoulli distribution parameter $p_x \asymp h^{m/2}$.
According to the inverse moments of a binomial random variable \citep[Page 275]{cribari2000note}, we have
\begin{equation*}
    \E\Big[\Big\lvert\frac{1}{\frac{1}{n}\sum_{i=1}^n h^{\frac{d-m}{2}} K_{h, i}}\Big\rvert^\alpha\Big] 
    \lesssim \left(nh^{m/2}\right)^\alpha O\left(\left[(n-1)p_x\right]^{-\alpha}\right) \asymp 1.
\end{equation*}

Thus for any $\alpha>2$, Hölder's inequality indicates that
\begin{align*}
& \E\Big[\Big\lvert\frac{1}{\frac{1}{n}h^{-m/2} \sum_{i=1}^n K\Big(h^{-1/2}\lVert X_i-x\rVert\Big)} - \frac{1}{c_K g_x\left(z\right)}\Big\rvert^{\alpha}\Big] \\
\lesssim & \Big\{\E\Big[\Big(\frac{1}{n}h^{-m/2} \sum_{i=1}^n K\Big(h^{-1/2}\lVert X_i-x\rVert\Big)\Big)^{-\alpha\alpha_1}\Big]\Big\}^{1/{\alpha_1}} \Big\{\E\Big[\Big\lvert c_K g_x\left(z\right) - \frac{1}{n}h^{-m/2} \sum_{i=1}^n K\Big(h^{-1/2}\lVert X_i-x\rVert\Big)\Big\rvert^{-\alpha\alpha_2}\Big]\Big\}^{1/{\alpha_2}}\\
\lesssim & \Big(h+\frac{1}{nh^{m/2}}\Big)^{\alpha/2}.
\end{align*}
This completes the proof.
\end{proof}

\subsection{Proof of Lemma~\ref{lemma-mse:tau.bar}}

\begin{proof}[Proof of Lemma~\ref{lemma-mse:tau.bar}]
We first notice that
\begin{align*}
& \E \Big[\Big(\bar{\tau}(\mX, x)  - \tau(x)\Big)^2\Big]
= \E \Big[\Big(\frac{1}{\sum_{i=1}^n K_{h, i}} \sum_{i=1}^n K_{h, i} \Big[\mu_1(X_i) - \mu_0(X_i)\Big] - \Big[\mu_1(x) - \mu_0(x)\Big] \Big)^2 \Big]\\
\lesssim &  \E \Big[\Big(\frac{1}{\sum_{i=1}^n K_{h, i}} \sum_{i=1}^n K_{h, i} \Big[\mu_1(X_i) - \mu_1(x)\Big] \Big)^2\Big] + \E \Big[\Big(\frac{1}{\sum_{i=1}^n K_{h, i}} \sum_{i=1}^n K_{h, i} \Big[\mu_0(X_i) - \mu_0(x)\Big] \Big)^2\Big].
\end{align*}
Regarding the two aforementioned terms, we only have to establish the first portion under the treatment condition, and the second portion under the control condition can be similarly established. Notice that for any positive value of $\alpha$,
\begin{align*}
    & \E\Big[\Big\lvert\frac{1}{n}\sum_{i=1}^n h^{\frac{d-m}{2}} K_{h, i} \Big[\mu_1(X_i) - \mu_1(x)\Big]\Big\rvert^{\alpha}\Big] \\
    \leq & \left(\frac{1}{nh^{m/2}}\right)^{\alpha} \E\Big[\Big(\sum_{i=1}^n K\left(h^{-1/2}\lVert X_i-x\rVert\right) \Big\lvert\mu_1(X_i) - \mu_1(x)\Big\rvert\Big)^{\alpha}\Big] \\
    \lesssim & \left(\frac{1}{nh^{m/2}}\right)^{\alpha}\E\Big[\Big(\sum_{i=1}^n K\left(h^{-1/2}\lVert X_i-x\rVert\right)\lVert X_i-x\rVert\Big)^{\alpha}\Big] \\
    \lesssim & \left(\frac{1}{nh^{m/2}}\right)^{\alpha} h^{\alpha/2}\E\Big[\Big(\sum_{i=1}^n \ind\left(h^{-1/2}\lVert X_i-x\rVert\in\supp(K)\right)\Big)^{\alpha}\Big] \\
    \lesssim & \left(\frac{1}{nh^{m/2}}\right)^{\alpha} h^{\alpha/2} \left(nh^{m/2}\right)^\alpha\\
    \lesssim& h^{\alpha/2}.
\end{align*}

Therefore, applying Lemma~\ref{lemma:inverse-kernel}
\begin{align*}
& \E \Big[\Big(\frac{1}{\sum_{i=1}^n K_{h, i}} \sum_{i=1}^n K_{h, i} \Big[\mu_1(X_i) - \mu_1(x)\Big] \Big)^2\Big]
\lesssim \E \Big[\Big(\frac{1}{c_K g_x\left(z\right)} \frac{1}{n}\sum_{i=1}^n h^{\frac{d-m}{2}} K_{h, i} \Big[\mu_1(X_i) - \mu_1(x)\Big] \Big)^2\Big]\\
& + \E \Big[\Big(\frac{1}{\frac{1}{n}\sum_{i=1}^n h^{\frac{d-m}{2}} K_{h, i}} - \frac{1}{c_K g_x\left(z\right)} \Big)^2 \Big(\frac{1}{n}\sum_{i=1}^n h^{\frac{d-m}{2}} K_{h, i} \Big[\mu_1(X_i) - \mu_1(x)\Big] \Big)^2\Big]\\
\lesssim & h + \Big\{\E\Big[\Big(\frac{1}{\frac{1}{n}\sum_{i=1}^n h^{\frac{d-m}{2}} K_{h, i}} - \frac{1}{c_K g_x\left(z\right)} \Big)^4\Big]\Big\}^{1/2} \Big\{\E\Big[\Big(\frac{1}{n}\sum_{i=1}^n h^{\frac{d-m}{2}} K_{h, i} \Big[\mu_1(X_i) - \mu_1(x)\Big] \Big)^4\Big]\Big\}^{1/2}
\lesssim h.
\end{align*}

Thus,
\begin{equation} \label{eq:mse1-result}
    \E \Big[\Big(\bar{\tau}(\mX, x)  - \tau(x)\Big)^2\Big] \lesssim h.
\end{equation}
This completes the proof.
\end{proof}

\subsection{Proof of Lemma~\ref{lemma-mse:fn}}

\begin{proof}[Proof of Lemma~\ref{lemma-mse:fn}]
\quad

{\bf Part I.} Assume the accuracy of the propensity score model, specifically, the validity of Assumption~\ref{asp:dr-propensity}\ref{asp:dr-propensity-2}.
Notice that
\begin{align*}
F_n(x) 
= & \frac{1}{c_K g_x\left(z\right)} \Big[ \frac{1}{n} h^{-\frac{d-m}{2}}\sum_{i=1}^n\sum_{j:D_j=1-D_i} K_{h,j}w_{j\leftarrow i} -K_{h,i}\Big(D_i \frac{1-e(X_i)}{e(X_i)} + (1-D_i) \frac{e(X_i)}{1-e(X_i)} \Big)\Big]\epsilon_i\\
& + \Big(\frac{1}{\frac{1}{n}h^{\frac{d-m}{2}} \sum_{i=1}^n K_{h, i}} - \frac{1}{c_K g_x\left(z\right)}\Big)
\\
& \Big[ \frac{1}{n} h^{-\frac{d-m}{2}}\sum_{i=1}^n \sum_{j:D_j=1-D_i} K_{h,j}w_{j\leftarrow i} -K_{h,i}\Big(D_i \frac{1-e(X_i)}{e(X_i)} + (1-D_i) \frac{e(X_i)}{1-e(X_i)} \Big)\Big] \epsilon_i\\
& + \frac{1}{c_K g_x\left(z\right)} \frac{1}{n}h^{\frac{d-m}{2}}\sum_{i=1}^n K_{h,i}\Big(D_i \frac{1}{e(X_i)} + (1-D_i) \frac{1}{1-e(X_i)} \Big) \epsilon_i \\
& + \Big( \frac{1}{\frac{1}{n}h^{\frac{d-m}{2}} \sum_{i=1}^n K_{h, i}} - \frac{1}{c_K g_x\left(z\right)}\Big) \frac{1}{n}h^{\frac{d-m}{2}}\sum_{i=1}^n K_{h,i}\Big(D_i \frac{1}{e(X_i)} + (1-D_i) \frac{1}{1-e(X_i)} \Big) \epsilon_i \\
= :&  \tilde{F}_1 + \tilde{F}_2 +\tilde{F}_3 + \tilde{F}_4.
\end{align*}

Therefore,
\begin{equation} \label{eq:mse2-4part}
    \E [F_n^2(x)] \lesssim \sum_{i=1}^4 \E [\tilde{F}_i^2(x)].
\end{equation}

{\bf Part 1.}
For the first term in \eqref{eq:mse2-4part}, according to Assumption~\ref{asp:dr-outcome}\ref{asp:dr-outcome-2}, $\E [\tilde{F}_1 \given \mX, \mD]=0$.
Therefore, by the law of total variance and \eqref{asp:dr-propensity-2-single} in Assumption~\ref{asp:dr-propensity}\ref{asp:dr-propensity-2},
\begin{align*}
& \E[\tilde{F}_1^2] = \Var[\tilde{F}_1] = \E[\Var[\tilde{F}_1\given \mX, \mD]] \\
= & \E\Big[\frac{1}{c_K^2 g_x^2\left(z\right)}   \sum_{i=1}^n\Big[\frac{1}{n}h^{-\frac{d-m}{2}}\sum_{j:D_j=1-D_i} K_{h,j}w_{j\leftarrow i} -K_{h,i}\Big(D_i \frac{1-e(X_i)}{e(X_i)} + (1-D_i) \frac{e(X_i)}{1-e(X_i)} \Big)\Big]^2 \Var\Big[\epsilon_i\Biggiven \mX, \mD\Big]\Big] \\
\lesssim & \E\Big[ \sum_{i=1}^n\Big[\frac{1}{n}h^{-\frac{d-m}{2}}\sum_{j:D_j=1-D_i} K_{h,j}w_{j\leftarrow i} -K_{h,i}\Big(D_i \frac{1-e(X_i)}{e(X_i)} + (1-D_i) \frac{e(X_i)}{1-e(X_i)} \Big)\Big]^2 \Big] \\
= & \frac{1}{n} \E\Big[\Big(h^{-\frac{d-m}{2}}\sum_{j:D_j=1-D_1} K_{h,j}w_{j\leftarrow 1} -K_{h,i}\Big(D_1 \frac{1-e(X_1)}{e(X_1)} + (1-D_i) \frac{e(X_1)}{1-e(X_1)} \Big)\Big)^2 \Big] \\
= & \frac{1}{n} O\Big(h^{-m/2+\epsilon}+\frac{1}{nh^{m/2}}\Big)
= o\Big(\frac{1}{nh^{m/2}}\Big). \yestag\label{eq:mse2-1:result1}
\end{align*}

{\bf Part 2.}
Let $\alpha_1>1, \alpha_2>1$ be such that $\alpha_1^{-1} + \alpha_2^{-1}=1$. Then utilizing Lemma~\ref{lemma:inverse-kernel}, we have
\begin{align*}
\E[\tilde{F}_2^2]
\leq & \Big\{\E\Big[\Big\lvert\frac{1}{\frac{1}{n}h^{\frac{d-m}{2}} \sum_{i=1}^n K_{h, i}} - \frac{1}{c_K g_x\left(z\right)}\Big\rvert^{2\alpha_1}\Big]\Big\}^{1/{\alpha_1}} \\
& \Big\{\E\Big[\Big\lvert\frac{1}{n} h^{-\frac{d-m}{2}}\sum_{i=1}^n \sum_{j:D_j=1-D_i} K_{h,j}w_{j\leftarrow i} -K_{h,i}\Big(D_i \frac{1-e(X_i)}{e(X_i)} + (1-D_i) \frac{e(X_i)}{1-e(X_i)} \Big\rvert\Big) \epsilon_i\Big\}^{2\alpha_2}\Big]\Big\}^{1/{\alpha_2}} \\
\lesssim & \Big(h+\frac{1}{nh^{m/2}}\Big)
\Big\{\E\Big[\Big\{\frac{1}{n} h^{-\frac{d-m}{2}}\sum_{i=1}^n \sum_{j:D_j=1-D_i} K_{h,j}w_{j\leftarrow i} -K_{h,i}\Big(D_i \frac{1-e(X_i)}{e(X_i)} + (1-D_i) \frac{e(X_i)}{1-e(X_i)} \Big)\Big) \epsilon_i\Big\}^{2} \\
& h^{-m(2\alpha_2-2)/2}\Big]\Big\}^{1/{\alpha_2}}\\
\lesssim & \Big(h+\frac{1}{nh^{m/2}}\Big) \Big\{\E\Big[\tilde{F}_1^2 h^{-m(\alpha_2-1)}\Big]\Big\}^{1/{\alpha_2}}
\lesssim \Big(h+\frac{1}{nh^{m/2}}\Big) h^{-m(1-\frac{1}{\alpha_2})} \Big(h^{\epsilon} + \frac{1}{n}\Big)^{\frac{1}{\alpha_2}}.
\end{align*}
Letting $\alpha_2$ be closer enough to $1$, we then obtain,
\begin{equation}\label{eq:mse2-1:result2}
    \E[\tilde{F}_2^2] = o\Big(h + \frac{1}{nh^{m/2}}\Big).
\end{equation}

{\bf Part 3.}
For the third term,
\begin{align*}
\tilde{F}_3 & = \frac{1}{c_K g_x\left(z\right)} \frac{1}{n}h^{\frac{d-m}{2}}\sum_{i=1}^n K_{h,i}\Big(D_i \frac{1}{e(X_i)} + (1-D_i) \frac{1}{1-e(X_i)} \Big) \epsilon_i\\
\lesssim & \frac{1}{n}h^{\frac{d-m}{2}}\sum_{i=1}^n K_{h,i}\Big(D_i \frac{1}{e(X_i)} + (1-D_i) \frac{1}{1-e(X_i)} \Big) \epsilon_i
=: \frac{1}{n}\sum_{i=1}^n F_{n,i}(x),
\end{align*}
where $\{F_{n,i}(x)\}_{i=1}^n$ are i.i.d. with mean zero conditional on $X_i,D_i$. According to Assumption~\ref{asp:causal},
\begin{align*}
& \E \Big[\Big(\frac{1}{n}\sum_{i=1}^nF_{n,i}(x) \Big)^2 \Big]
= \frac{1}{n} \Var \Big[ F_{n,1}(x) \Big]
= \frac{1}{n} \E\Big[\Var\Big[ F_{n,1}(x) \Biggiven \mX,\mD\Big]\Big]\\
\lesssim & \frac{1}{nh^{m}} \E\Big[K^2\Big(h^{-1/2}\lVert X_1-x\lVert\Big)\Big(D_1 \frac{1}{e(X_1)} + (1-D_1) \frac{1}{1-e(X_1)}\Big)^2\Var[ \epsilon_1\given X_1]\Big]\\
\lesssim & \frac{1}{nh^{m}} \E\Big[K^2\Big(h^{-1/2}\lVert X_1-x\lVert\Big)\Big]
\lesssim \frac{1}{nh^{m/2}}.
\end{align*}
To conclude, 
\begin{equation}\label{eq:mse2-1:result3}
    \E[\tilde{F}_3^2] = O\Big(\frac{1}{nh^{-m/2}}\Big).
\end{equation}

{\bf Part 4.}
Notice that
\begin{align*}
\tilde{F}_4 = \Big(\frac{1}{\frac{1}{n}h^{\frac{d-m}{2}} \sum_{i=1}^n K_{h, i}} - \frac{1}{c_K g_x\left(z\right)}\Big) \frac{1}{n} \sum_{i=1}^n F_{n,i}(x),
\end{align*}
where
\begin{align*}
& \E \Big[\Big(\frac{1}{n}\sum_{i=1}^nF_{n,i}(x) \Big)^4 \Big]
= n^{-4} \Big\{ \E \Big[ \sum_{i=1}^nF_{n,i}^4(x) + \sum_{i_1,i_2}F_{n,i_1}^2(x)F_{n,i_2}^2(x) \Big] \Big\}
\lesssim n^{-3}\E \Big[F_{n,1}^4(x)\Big] + n^{-2}\Big\{\E\Big[F_{n,1}^2(x)\Big]\Big\}^2\\
\lesssim & n^{-3}\E \Big[h^{-2m}K^4\Big(h^{-1/2}\lVert X_1-x\lVert\Big)\Big] + n^{-2}\Big\{\E\Big[h^{-m}K^2\Big(h^{-1/2}\lVert X_1-x\lVert\Big)\Big]\Big\}^2
\lesssim \Big(\frac{1}{nh^{m/2}}\Big)^2.
\end{align*}

Thus, by Lemma~\ref{lemma:inverse-kernel},
\begin{align*}
\E[\tilde{F}_4^2]
\leq \Big\{\E \Big[\Big(\frac{1}{n}\sum_{i=1}^nF_{n,i}(x) \Big)^4 \Big]\Big\}^{1/2} \Big\{\E \Big[\Big( \frac{1}{\frac{1}{n}h^{\frac{d-m}{2}} \sum_{i=1}^n K_{h, i}} - \frac{1}{c_K g_x\left(z\right)}\Big)^4 \Big]\Big\}^{1/2}
= o\Big(\frac{1}{nh^{m/2}}\Big).\yestag\label{eq:mse2-1:result4}
\end{align*}

Combining \eqref{eq:mse2-1:result1}, \eqref{eq:mse2-1:result2}, \eqref{eq:mse2-1:result3} and \eqref{eq:mse2-1:result4}, we arrive at the conclusion expressed as
\begin{equation} \label{eq:mse2-1:result}
\E\left[\left(F_n(x)\right)^2\right] \lesssim \frac{1}{n h^{m/2}}.
\end{equation}

{\bf Part II.} Assume the misspecification for the propensity score, specifically, the validity of Assumptions~\ref{asp:dr-outcome}\ref{asp:dr-outcome-3}.  We then have
\begin{align*}
    & \lvert F_n(x)\rvert = \Big\lvert \frac{1}{\sum_{i=1}^n K_{h, i}} \sum_{i=1}^n\left(2 D_i-1\right)\Big(K_{h, i}+\sum_{j: D_j=1-D_i} K_{h, j} w_{j \leftarrow i}\Big)\epsilon_i \Big\rvert \\
    \leq & \Big\lvert\frac{1}{\sum_{i=1}^n K_{h, i}} \sum_{i=1}^n \left(2 D_i-1\right) K_{h, i}\epsilon_i\Big\rvert + \Big\lvert\frac{1}{\sum_{i=1}^n K_{h, i}} \sum_{i=1}^n \left(2 D_i-1\right) \Big(\sum_{j: D_j=1-D_i} K_{h, j} w_{j \leftarrow i}\Big)\epsilon_i\Big\rvert. \yestag\label{eq:mse2-2:2part}
\end{align*}

The first term in 
\eqref{eq:mse2-2:2part} can be exactly handled as that of \eqref{eq:mse2-1:result3} and \eqref{eq:mse2-1:result4}.

The second term in \eqref{eq:mse2-2:2part} can be broken down as follows:
\begin{align*}
& \Big\lvert\frac{1}{\sum_{i=1}^n K_{h, i}} \sum_{i=1}^n \left(2 D_i-1\right) \Big(\sum_{j: D_j=1-D_i} K_{h, j} w_{j \leftarrow i}\Big)\epsilon_i\Big\rvert \\
\leq & \frac{1}{c_K g_x\left(z\right)}\Big\lvert  \frac{1}{n}h^{\frac{d-m}{2}}\sum_{i=1}^n\left(2 D_i-1\right) \Big(\sum_{j: D_j=1-D_i} K_{h, j} w_{j \leftarrow i}\Big)\epsilon_i \Big\rvert \\
+ & \Big\lvert \frac{1}{\frac{1}{n}h^{\frac{d-m}{2}} \sum_{i=1}^n K_{h, i}} - \frac{1}{c_K g_x\left(z\right)}\Big\rvert \Big\lvert \frac{1}{n}h^{\frac{d-m}{2}}\sum_{i=1}^n\left(2 D_i-1\right)\Big(\sum_{j: D_j=1-D_i} K_{h, j} w_{j \leftarrow i}\Big)\epsilon_i \Big\rvert.
\yestag\label{eq:mse2-2:dom}
\end{align*}

As the first term in \eqref{eq:mse2-2:dom} satisfies the mean-zero condition given $\mX$ and $\mD$, Assumption~\ref{asp:dr-outcome} yields that
\begin{align*}
& \E \Big[\Big(\frac{1}{n}h^{\frac{d-m}{2}}\sum_{i=1}^n\Big(2 D_i-1\Big) \Big(\sum_{j: D_j=1-D_i} K_{h, j} w_{j \leftarrow i}\Big)\epsilon_i\Big)^2 \Big] \\
= & \Var \Big[\frac{1}{nh^{m/2}}\sum_{i=1}^n\Big(2 D_i-1\Big) \Big(\sum_{j: D_j=1-D_i} K\Big(h^{-1/2}\lVert X_j-x\rVert\Big) w_{j \leftarrow i}\Big)\epsilon_i \Big]\\
= & \frac{1}{n^2}\sum_{i=1}^n\E\Big[\Big(\sum_{j: D_j=1-D_i} h^{m/2} K\Big(h^{-1/2}\lVert X_j-x\rVert\Big) w_{j \leftarrow i}\Big)^2\Var[\epsilon_i\given \mX, \mD]\Big]\\
\lesssim & \frac{1}{n}\E\Big[\Big(\sum_{j: D_j=1-D_1} h^{-m/2} K\Big(h^{-1/2}\lVert X_j-x\rVert\Big) w_{j \leftarrow 1}\Big)^2\Big]
\lesssim \frac{1}{nh^{m/2}}. \yestag\label{eq:mse2-2:result3}
\end{align*}

Regarding the second term in \eqref{eq:mse2-2:dom} and considering Assumption~\ref{asp:wij}, we observe that
\begin{align*}
& \Big\lVert\frac{1}{nh^{m/2}}\sum_{i=1}^n\Big(2 D_i-1\Big) \Big(\sum_{j: D_j=1-D_i} K_{h, j} w_{j \leftarrow i}\Big)\epsilon_i \Big\lVert_\infty \\
\lesssim & \Big\lVert\frac{1}{nh^{m/2}}\sum_{i=1}^n \sum_{j: D_j=1-D_i} \lvert w_{j \leftarrow i} \rvert \Big\lVert_\infty 
= \frac{1}{nh^{m/2}}\Big\lVert\sum_{j=1}^n \sum_{i: D_i=1-D_j} \lvert w_{j \leftarrow i}\rvert \Big\lVert_\infty  = O(h^{-m/2}).
\end{align*}
Leveraging Lemma~\ref{lemma:inverse-kernel} and Hölder's inequality, we obtain that when $1<\alpha_2 \leq 1+1/m$,
\begin{align*}
& \E \Big[\Big(\frac{1}{\frac{1}{n}h^{\frac{d-m}{2}} \sum_{i=1}^n K_{h, i}} - \frac{1}{c_K g_x\left(z\right)}\Big)^2 \Big( \frac{1}{n}h^{\frac{d-m}{2}}\sum_{i=1}^n\left(2 D_i-1\right) \sum_{j: D_j=1-D_i} K_{h, j} w_{j \leftarrow i}\epsilon_i\Big)^2\Big]\\
\leq & \Big\{\E \Big[\Big\lvert\frac{1}{\frac{1}{n}h^{\frac{d-m}{2}} \sum_{i=1}^n K_{h, i}} - \frac{1}{c_K g_x\left(z\right)}\Big\rvert^{2\alpha_1}\Big]\Big\}^{1/\alpha_1} \Big\{\E \Big[\Big\lvert\frac{1}{n}h^{\frac{d-m}{2}}\sum_{i=1}^n\left(2 D_i-1\right) \sum_{j: D_j=1-D_i} K_{h, j} w_{j \leftarrow i}\epsilon_i\Big\rvert^{2\alpha_2}\Big]\Big\}^{1/\alpha_2}\\
\lesssim & \Big(h +\frac{1}{nh^{m/2}}\Big) \Big\{\E \Big[\Big( \frac{1}{n}h^{\frac{d-m}{2}}\sum_{i=1}^n\left(2 D_i-1\right) \sum_{j: D_j=1-D_i} K_{h, j} w_{j \leftarrow i}\epsilon_i\Big)^{2} h^{-m(\alpha_2-1)}\Big]\Big\}^{1/\alpha_2}\\
\lesssim & \Big(h +\frac{1}{nh^{m/2}}\Big) h^{-m(1-\frac{1}{\alpha_2})} \Big\{ \frac{1}{n} \E\Big[\Big(h^{-m/2}\sum_{j: D_j=1-D_1} K\Big(h^{-1/2}\lVert X_j-x \rVert\Big) w_{j \leftarrow 1}\Big)^{2} \Big]\Big\}^{1/\alpha_2} \\
\lesssim & \Big(h +\frac{1}{nh^{m/2}}\Big) h^{-m(1-\frac{1}{\alpha_2})} \Big(\frac{1}{nh^{m/2}}\Big)^{1/\alpha_2}
= o\Big(h +\frac{1}{nh^{m/2}}\Big).\yestag\label{eq:mse2-2:result4}
\end{align*}

Together with \eqref{eq:mse2-2:result3} and \eqref{eq:mse2-2:result4}, we finished our proof.
\end{proof}

\subsection{Proof of Lemma~\ref{lemma-mse:b}}

\begin{proof}[Proof of Lemma~\ref{lemma-mse:b}]
\quad

{\bf Part I.} Suppose Assumptions~\ref{asp:mse-muhat}-\ref{asp:mse-discrepancy} is true. We then have
\begin{align*}
& \lvert B_n(x) - \hat{B}_n(x) \rvert \\
\leq & \frac{1}{c_K g_x\left(z\right)} \frac{1}{n}h^{\frac{d-m}{2}} \sum_{i=1}^n K_{h, i} \sum_{j:D_j=1-D_i} \lvert w_{i\leftarrow j} \rvert \max_{\omega \in \{0,1\}} \Big\lvert \mu_\omega(X_i) - \mu_\omega(X_j) - \hat{\mu}_\omega(X_i) + \hat{\mu}_\omega(X_j) \Big\rvert\\
& + \Big\lvert \frac{1}{ \frac{1}{n}h^{\frac{d-m}{2}} \sum_{i=1}^n K_{h, i}} - \frac{1}{c_K g_x\left(z\right)}\Big\rvert \frac{1}{n}h^{\frac{d-m}{2}} \sum_{i=1}^n K_{h, i} \sum_{j:D_j=1-D_i} \lvert w_{i\leftarrow j} \rvert \\
& \max_{\omega \in \{0,1\}} \Big\lvert \mu_\omega(X_i) - \mu_\omega(X_j) - \hat{\mu}_\omega(X_i) + \hat{\mu}_\omega(X_j) \Big\rvert. \yestag\label{eq:mse3-dom}
\end{align*}
Concerning the first term in Equation \eqref{eq:mse3-dom}, utilizing Hölder's inequality in conjunction with Assumptions~\ref{asp:mse-mu}, \ref{asp:mse-muhat}, and \ref{asp:mse-discrepancy},
\begin{align*}
& \E\Big[\Big(\frac{1}{c_K g_x\left(z\right)} \frac{1}{n}h^{\frac{d-m}{2}} \sum_{i=1}^n K_{h, i} \sum_{j:D_j=1-D_i} \lvert w_{i\leftarrow j} \rvert \max_{\omega \in \{0,1\}} \Big\lvert \mu_\omega(X_i) - \mu_\omega(X_j) - \hat{\mu}_\omega(X_i) + \hat{\mu}_\omega(X_j) \Big\rvert\Big)^2\Big]\\
\lesssim & \E \Big[\Big(\frac{1}{n}h^{\frac{d-m}{2}} \sum_{i=1}^n K_{h, i} \sum_{j:D_j=1-D_i} \lvert w_{i\leftarrow j} \rvert \lVert X_i-X_j \rVert\Big)^2\Big] \\
\leq & \Big\{\E \Big[\Big(\frac{1}{n}h^{\frac{d-m}{2}} \sum_{i=1}^n K_{h, i} \sum_{j:D_j=1-D_i} \lvert w_{i\leftarrow j} \rvert \lVert X_i-X_j \rVert\Big)^{2\gamma}\Big]\Big\}^{1/\gamma}
\lesssim h.
\end{align*}
Regarding the second term in Equation \eqref{eq:mse3-dom}, we can apply Hölder's inequality, along with Lemma~\ref{lemma:inverse-kernel} and Assumption~\ref{asp:mse-discrepancy}, yielding
\begin{align*}
& \E\Big[\Big(\Big\lvert \frac{1}{ \frac{1}{n}h^{\frac{d-m}{2}} \sum_{i=1}^n K_{h, i}} - \frac{1}{c_K g_x\left(z\right)}\Big\rvert \frac{1}{n}h^{\frac{d-m}{2}} \sum_{i=1}^n K_{h, i} \sum_{j:D_j=1-D_i} \lvert w_{i\leftarrow j} \rvert \\
& \max_{\omega \in \{0,1\}} \Big\lvert \mu_\omega(X_i) - \mu_\omega(X_j) - \hat{\mu}_\omega(X_i) + \hat{\mu}_\omega(X_j) \Big\rvert\Big)^2\Big]\\
\lesssim & \Big\{\E \Big[\Big(\frac{1}{n}h^{\frac{d-m}{2}} \sum_{i=1}^n K_{h, i} \sum_{j:D_j=1-D_i} \lvert w_{i\leftarrow j} \rvert \lVert X_i-X_j \rVert\Big)^{2\gamma}\Big]\Big\}^{1/\gamma}
\lesssim h.
\end{align*}
Therefore,
\begin{align*}
\E \Big[\Big(B_n(x) - \hat{B}_n(x)\Big)^2\Big]
\lesssim h.  \yestag \label{eq:mse3-result}
\end{align*}

{\bf Part II.} Suppose that Assumption~\ref{asp:mse-weight} is true. According to Assumption~\ref{asp:wij},
\begin{align*}
& \lvert B_n(x) - \hat{B}_n(x) \rvert \\
= & \Big\lvert \frac{1}{\sum_{i=1}^n K_{h, i}} \sum_{i=1}^n\left(2 D_i-1\right)K_{h, i} \Big[\sum_{j:D_j=1-D_i} w_{i\leftarrow j} \Big(\mu_{1-D_i}(X_i)-\hat{\mu}_{1-D_i}(X_i)\Big)\Big]\\
& - \frac{1}{\sum_{i=1}^n K_{h, i}} \sum_{i=1}^n\left(2 D_i-1\right)K_{h, i} \Big[\sum_{j:D_j=1-D_i} w_{i\leftarrow j} \Big(\mu_{1-D_i}(X_j)-\hat{\mu}_{1-D_i}(X_j)\Big)\Big]\Big\rvert\\
\lesssim & \frac{1}{\sum_{i=1}^n K_{h, i}} \sum_{i=1}^n K_{h, i}\sum_{j:D_j=1-D_i} \lvert w_{i\leftarrow j} \rvert \max_{\omega \in \{0,1\}} \lVert \mu_\omega - \hat{\mu}_\omega \rVert_\infty\\
\lesssim & \frac{1}{\sum_{i=1}^n K_{h, i}} \sum_{i=1}^n K_{h, i} \max_{\omega \in \{0,1\}} \lVert \mu_\omega - \hat{\mu}_\omega \rVert_\infty
= \max_{\omega \in \{0,1\}} \lVert \mu_\omega - \hat{\mu}_\omega \rVert_\infty.
\end{align*}
Therefore,
\begin{align*}
\E \Big[\Big(B_n(x) - \hat{B}_n(x)\Big)^2\Big]
\lesssim h,  \yestag \label{eq:mse3-result-2}
\end{align*}
and the proof is thus complete.
\end{proof}

\subsection{Proof of Lemma~\ref{lemma-clt:tau.bar}}
\begin{proof}[Proof of Lemma~\ref{lemma-clt:tau.bar}]
The proof is grounded in Lemma~\ref{lemma-mse:tau.bar} and \eqref{eq:thm-dr:kde}.
\end{proof}

\subsection{Proof of Lemma~\ref{lemma-clt:en}}
\begin{proof}[Proof of Lemma~\ref{lemma-clt:en}]
For any $i \in \zahl{n}$, let $\epsilon_i = Y_i - \mu_{D_i}(X_i)$  and define
\begin{align*}
E_{n,i}(x) := \left(2 D_i-1\right)h^{\frac{d-m}{2}}K_{h, i} \left(D_i\left(1+\phi_1\left(X_i\right)\right)+\left(1-D_i\right)\left(1+\phi_0\left(X_i\right)\right)\right)\epsilon_i.
\end{align*}
It's noteworthy that $\E [E_{n,i}(x) \given \mX, \mD] = 0$ and $\E [E_{n,i}(x)] = 0$. Therefore,
\begin{align*}
\Var[E_{n,i}(x) \given \mX, \mD]
= h^{d-m}K^2_{h, i}\left(D_i\left(1+\phi_1\left(X_i\right)\right)+\left(1-D_i\right)\left(1+\phi_0\left(X_i\right)\right)\right)^2 \sigma_{D_i}^2(X_i).
\end{align*}
By applying the law of total variance, we obtain that
\begin{equation*}
\Var\Big[E_{n,i}(x)\Big]
= \E \Big[h^{d-m}K^2_{h, i}\left(D_i\left(1+\phi_1\left(X_i\right)\right)+\left(1-D_i\right)\left(1+\phi_0\left(X_i\right)\right)\right)^2 \sigma_{D_i}^2(X_i)\Big].
\end{equation*}
Since given $x$, $[E_{n,i}(x)]_{i=1}^n$ are independent, define
\begin{align*}
s^2_n :=  \Var\Big(\sum_{i=1}^n E_{n,i}(x)\Big) = n \E \Big[h^{d-m}K^2_{h, 1}\left(D_1\left(1+\phi_1\left(X_1\right)\right)+\left(1-D_1\right)\left(1+\phi_0\left(X_1\right)\right)\right)^2 \sigma_{D_1}^2(X_1)\Big].
\end{align*}
To apply the Lyapunov's central limit theorem \cite[Theorem 27.3]{billingsley1995probability}, it suffices to verify that under Assumption~\ref{asp:clt-se},
\begin{equation} \label{thm:clt-en-Lyapunov}
    \frac{1}{s^{2+k}_n} \sum_{i=1}^n \E\Big[\lvert E_{n,i}(x)\rvert^{2+k}\Big] \to 0.
\end{equation}
In detail, by Assumptions~\ref{asp:causal}\ref{asp:causal-2} and \ref{asp:clt-se},
\begin{align*}
& \frac{1}{s^{2+k}_n} \sum_{i=1}^n \E\Big[\lvert E_{n,i}(x)\rvert^{2+k}\Big] \\
= & \frac{n \E\Big[h^{\frac{(2+k)(d-m)}{2}}K^{2+k}_{h, 1} \Big\lvert D_1\left(1+\phi_1\left(X_1\right)\right)+\left(1-D_1\right)\left(1+\phi_0\left(X_1\right)\right)\Big\rvert^{2+k} \E[\lvert U_{D_1}(X_1)\rvert^{2+k}\given X=X_1]\Big]}{\left(n \E \Big[h^{d-m}K^2_{h, 1}\left(D_1\left(1+\phi_1\left(X_1\right)\right)+\left(1-D_1\right)\left(1+\phi_0\left(X_1\right)\right)\right)^2 \sigma_{D_1}^2(X_1)\Big]\right)^{\frac{2+k}{2}}}\\
\lesssim & n^{-k/2}\frac{\E\Big[h^{\frac{(2+k)(d-m)}{2}}K^{2+k}_{h, 1}\Big]}{\left(\E \Big[h^{d-m}K^2_{h, 1}\Big]\right)^{\frac{2+k}{2}}}.
\end{align*}
Noticing that by Lemma~\ref{lemma:manifold},
\begin{align*}
& \E\Big[h^{\frac{(2+k)(d-m)}{2}}K^{2+k}_{h, 1}\Big]
\asymp h^{-(2+k)m/2}\int K^{2+k}\left( h^{-1/2} \lVert z_i-\psi(x)\rVert \right) g_x\left(z_i\right) \d\lambda(z_i)\\
= & h^{-(1+k)m/2} g_x\left(\psi(x)\right) \int K^{2+k}(\lVert t\rVert)\d t + o\left(h^{-(1+k)m/2}\right) \asymp h^{-(1+k)m/2} .
\end{align*}
Thus, 
\begin{equation*}
\frac{1}{s^{2+k}_n} \sum_{i=1}^n \E\Big[\lvert E_{n,i}(x)\rvert^{2+k}\Big] 
\lesssim n^{-k/2}\frac{h^{-(1+k)m/2}}{\left(h^{-m/2}\right)^{\frac{2+k}{2}}} \lesssim n^{-k/2}h^{-km/4} \rightarrow 0.
\end{equation*}
By central limit theory on triangular arrays, we have
\begin{equation} \label{thm:clt-en-lln-main}
    \frac{\sum_{i=1}^n E_{n,i}(x)}{\left(n \E \Big[h^{d-m}K^2_{h, 1}\left(D_1\left(1+\phi_1\left(X_1\right)\right)+\left(1-D_1\right)\left(1+\phi_0\left(X_1\right)\right)\right)^2 \sigma_{D_1}^2(X_1)\Big]\right)^{1/2}} \stackrel{\sf d}{\longrightarrow} N(0,1).
\end{equation}

Under Assumption~\ref{asp:clt-phi}, by \eqref{eq:thm-dr:kde} and \eqref{thm:clt-en-lln-main}, 
\begin{equation}\label{eq:clt-result-21}
\sqrt{nh^{m/2}} E_n(x)
= \sqrt{nh^{m/2}} \frac{\frac{1}{n}\sum_{i=1}^n E_{n,i}(x)}{\frac{1}{n}\sum_{i=1}^n h^{\frac{d-m}{2}} K_{h, i}}
\stackrel{\sf d}{\longrightarrow} N\Big(0,\frac{\tilde{\Sigma}(x)}{c_K^2 f^2(x)}\Big).
\end{equation}
Furthermore, if $\phi_1(x)$ and $\phi_0(x)$ are Lipchitz functions that are bounded and bounded away from zero, 
then by Lemmas~\ref{lemma:fx_gphi} and \ref{lemma:manifold},
\begin{align*}
& h^{-m/2} \E \Big[K^2\left(\lVert h^{-1/2}\left(X_1-x\right)\rVert\right)\Big[D_1\left(1+\phi_1\left(X_1\right)\right)+\left(1-D_1\right)\left(1+\phi_0\left(X_1\right)\right)\Big]^2 \sigma_{D_1}^2(X_1)\Big] \\
= & h^{-m/2}\E \Big[K^2\left(\lVert h^{-1/2}\left(X_1-x\right)\rVert\right) \left(e(X_1)\left(1+\phi_1\left(X_1\right)\right)^2 \sigma_1^2(X_1)+ \left(1-e(X_1)\right)\left(1+\phi_0\left(X_1\right)\right)^2 \sigma_0^2(X_1)\right)\Big] \\
= & h^{-m/2}\int K^2\left(\lVert h^{-1/2}\left(x_1-x\right)\rVert\right) \Big(e(x)\left(1+\phi_1\left(x\right)\right)^2 \sigma_1^2(x)+ \left(1-e(x)\right)\left(1+\phi_0\left(x\right)\right)^2 \sigma_0^2(x) \\
& + O\left(\lVert x_1-x\rVert\right)\Big)\d\zeta(x_1) \\
= & \Big[ e(x)\left(1+\phi_1\left(x\right)\right)^2 \sigma_1^2(x)+ \left(1-e(x)\right)\left(1+\phi_0\left(x\right)\right)^2 \sigma_0^2(x) \Big] \frac{\int K^2\left(\lVert h^{-1/2}\left(x_1-x\right)\rVert\right) \d\zeta(x_1)}{\int K^2\left( h^{-1/2} \lVert z_1-\psi(x)\rVert \right) g_x\left(z_1\right)\d\lambda(z_1)} \\
& \int K^2\left(\lVert t\rVert \right) g_x\left(z\right)\d\lambda(t) +O\left(h^{(1-m)/2}\right)\\
= & \Sigma\left(x\right)c_K^2 f^2(x)+ O\left(h^{(1-m)/2}\right),
\end{align*}
where $\Sigma(x)=\frac{1}{c_K^2 f(x)}\Big( e(x)\left(1+\phi_1(x)\right)^2\sigma^2_1(x) + \left(1-e(x)\right)\left(1+\phi_0(x)\right)^2\sigma^2_0(x)\Big)\int_{\mathbb{R}^m} K^2(\lVert t\rVert) \d t$.
Thus, utilizing Assumption~\ref{asp:clt-phi} and \eqref{eq:clt-result-21}, we conclude that
\begin{equation}\label{eq:clt-result-22}
\sqrt{nh^{m/2}} E_n(x)
\stackrel{\sf d}{\longrightarrow} N(0,\Sigma(x)).   
\end{equation}

Under Assumption~\ref{asp:dr-propensity}\ref{asp:dr-propensity-2}, $\phi_1(x) = \frac{1-e(x)}{e(x)}$ and $\phi_0(x) = \frac{e(x)}{1-e(x)}$. Therefore,
\begin{equation} \label{eq:clt-result-1}
\sqrt{n h^{m / 2}} E_n(x) \stackrel{\mathrm{d}}{\longrightarrow} N\Big(0, \frac{1}{c_K^2 g_x(\psi(x))}\left(\frac{\sigma_1^2(x)}{e(x)}+\frac{\sigma_0^2(x)}{1-e(x)}\right) \int_{\mathbb{R}^m} K^2(\|t\|) \d t\Big).
\end{equation}

Combining \eqref{eq:clt-result-1}, \eqref{eq:clt-result-21}, and \eqref{eq:clt-result-22} completes the proof.
\end{proof}

\subsection{Proof of Lemma~\ref{lemma-clt:en-t}}
\begin{proof}[Proof of Lemma~\ref{lemma-clt:en-t}]
First, we can express
\begin{align*}
& \lvert \tilde{E}_n(x) \rvert \\
\leq & \frac{1}{c_K g_x\left(\psi(x)\right)}\Big\lvert  \frac{1}{n}\sum_{i=1}^n\Big(2 D_i-1\Big)h^{\frac{d-m}{2}} \Big(\sum_{j:D_j=1-D_i} K_{h, j}w_{j\leftarrow i} - K_{h, i}\Big(D_i \phi_1(X_i) + (1-D_i) \phi_0(X_i)\Big)\Big)\epsilon_i \Big\rvert \\
+ & \Big\lvert \frac{1}{ \frac{1}{n} \sum_{i=1}^n h^{\frac{d-m}{2}}K_{h, i}} - \frac{1}{c_K g_x\left(\psi(x)\right)}\Big\rvert \\
& \Big\lvert \frac{1}{n}\sum_{i=1}^n\Big(2 D_i-1\Big)h^{\frac{d-m}{2}}\Big(\sum_{j:D_j=1-D_i} K_{h, j}w_{j\leftarrow i} - K_{h, i}\Big(D_i \phi_1(X_i) + (1-D_i) \phi_0(X_i)\Big)\Big)\epsilon_i \Big\rvert.
\end{align*}
With \eqref{eq:thm-dr:kde}, by leveraging the same analysis as \eqref{eq:mse2-1:result1}, we conclude that
$$
\E[\tilde{E}_n(x)] = o\Big(\frac{1}{n h^{m/2}}\Big),
$$
which leads to
\begin{equation*}
    \sqrt{nh^{m/2}} \tilde{E}_n(x)\stackrel{\sf p}{\longrightarrow} 0
\end{equation*}
and thus completes the proof.
\end{proof}

\subsection{Proof of Lemma~\ref{lemma-clt:b}}
\begin{proof}[Proof of Lemma~\ref{lemma-clt:b}]
The proof is grounded in Lemma~\ref{lemma-mse:b} and \eqref{eq:thm-dr:kde}.
\end{proof}

\section{Proofs of lemmas in Section~\ref{sec:pf-theory}}

\subsection{Proof of Lemma~\ref{lemma:rf-clt}}

\begin{proof}[Proof of Lemma~\ref{lemma:rf-clt}]
Without loss of generosity, we assume $\omega=1$.

{\bf Part I.} Assume Assumption~\ref{asp:rf-honest} holds. According to Part III in \cite{lin2022regression},
\begin{align*}
& \E\Big[\Big(\frac{s}{\lvert L^{1}(X_1)\rvert} \zeta_1(L^{1}(X_1) \cap\cM)-1\Big)^2\Biggiven \mD,D_1=1,X_1,1\in\cI^1\Big] \\
= & \E\Big[\Big(\frac{\zeta_1(L^{1}(X_1) \cap\cM)-\frac{1}{s} \lvert L^{1}(X_1)\rvert}{\frac{1}{s} \lvert L^{1}(X_1) \rvert}\Big)^2\Biggiven \mD,D_1=1,X_1,1\in\cI^1\Big] \\
= & \E\Big[\Big(\frac{s}{\lvert L^{1}(X_1) \rvert} \zeta_1(L^{1}(X_1) \cap\cM)-1\Big)^2 \ind\Big(\Big\lvert\zeta_1(L^{1}(X_1) \cap\cM)-\frac{1}{s} \lvert L^{1}(X_1) \rvert \Big\rvert> h^{\epsilon} \frac{1}{s} \lvert L^{1}(X_1) \rvert\Big) \\
& \Biggiven \mD,D_1=1,X_1,1\in\cI^1\Big] + h^{2 \epsilon}\\
\leq & h^{2 \epsilon} + s^2\P\Big(\Big\lvert\zeta_1(L^{1}(X_1) \cap\cM)-\frac{1}{s} \lvert L^{1}(X_1) \rvert\Big\rvert>h^{\epsilon} \frac{1}{s} \lvert L^{1}(X_1) \rvert \Biggiven \mD,D_1=1,X_1,1\in\cI^1\Big) \\
\leq & h^{2 \epsilon} + s^2\Big\{\P\Big(\Big\lvert\zeta_1(L^{1}(X_1) \cap\cM)-\frac{1}{s} \lvert L^{1}(X_1) \rvert\Big\rvert>2^{d+1}s^{-1/2}(\log s)^{1/2} \Biggiven \mD,D_1=1,X_1,1\in\cI^1\Big)\\
& + \P\Big(\Big\rvert L^{1}(X_1) \Big\rvert <  h^{-\epsilon}2^{-d-1} (s\log s)^{1/2}\Biggiven \mD,D_1=1,X_1,1\in\cI^1\Big)\Big\} \\
\lesssim & h^{2 \epsilon} + s^{-6} +s^2 \P\Big(h^{-\epsilon}(s\log s)^{1/2} \Big\rvert L^{1}(X_1) \Big\rvert^{-1}>  2^{-d-1} \Biggiven \mD,D_1=1,X_1,1\in\cI^1\Big)\\
\leq &  h^{2 \epsilon} + s^{-6} +s^2 2^{(d+1)\beta}\E\Big[\Big\{h^{-\epsilon}(s\log s)^{1/2} \Big\rvert L^{1}(X_1) \Big\rvert^{-1}\Big\}^{\beta}\Biggiven \mD,D_1=1,X_1,1\in\cI^1\Big] \\
\lesssim & h^{2\epsilon}.
\end{align*}

Furthermore, according to \eqref{eq:rf-consistency4:leaf},
\begin{align*}
& \Big\lVert\E\Big[\Big(\frac{s}{\lvert L^{1}(X_1) \rvert} \zeta_0(L^{1}(X_1) \cap\cM)\Big)^2\Biggiven \mD,D_1=1,X_1,1\in\cI^1\Big]\Big\rVert_\infty \\
\lesssim & \Big\lVert\E\Big[\Big(\frac{s}{\lvert L^{1}(X_1) \rvert} \frac{g_{0,x}(Z_1)}{g_{1,x}(Z_1)} \zeta_1(L^{1}(X_1) \cap\cM)\Big)^2\Biggiven \mD,D_1=1,X_1,1\in\cI^1\Big]\Big\rVert_\infty \\
& +  \Big\lVert\E\Big[\Big(\frac{s}{\lvert L^{1}(X_1) \rvert} h^{1/2+\epsilon} \zeta_1(L^1_{t}\cap\cM)\Big)^2\Biggiven \mD,D_1=1,X_1,1\in\cI^1\Big]\Big\rVert_\infty\\
= & O(1). \yestag\label{eq:lemma-rf-zeta}
\end{align*}

{\bf Part II.} Assuming Assumption~\ref{asp:rf-extr-honest} holds, then $\{L_t\}_{t \ge 1}$ are independent of $\cI$. Utilizing Hölder's inequality, the inverse moments of a binomial random variable \citep[Page 275]{cribari2000note} and the fourth central moment of binomial distribution, we obtain
\begin{align*}
& \E\Big[\Big(\frac{s}{\lvert L^1(X_1)\rvert} \zeta_1(L^1(X_1)\cap\cM)-1\Big)^2\Biggiven \mD, D_1=1, X_1, 1 \in \cI^{1}, \{L^{1}_t\}_{t \ge 1}\Big] \\
= & \E\Big[\Big(\frac{\zeta_1(L^{1}_{t_0}\cap\cM) - \frac{1}{s}\sum_{k \in \cI^{1}} \ind(X_k \in L^{1}_{t_0})}{\frac{1}{s} \sum_{k \in \cI^{1}} \ind(X_k \in L^{1}_{t_0})}\Big)^2\Biggiven \mD, D_1=1,X_1,X_1 \in L^{1}_{t_0},L^{1}_{t_0}\Big] \\
\leq & \Big\{\E\Big[\Big(\frac{1}{s} \sum_{k \in \cI^{1}} \ind(X_k \in L^{1}_{t_0})\Big)^{-4}\Biggiven \mD, D_1=1,X_1,X_1 \in L^{1}_{t_0},L^{1}_{t_0}\Big]\Big\}^{1/2}\\ 
& \Big\{\E\Big[\Big(\zeta_1(L^{1}_{t_0}\cap\cM) - \frac{1}{s}\sum_{k \in \cI^{1}} \ind(X_k \in L^{1}_{t_0})\Big)^4\Biggiven \mD, D_1=1,X_1,X_1 \in L^{1}_{t_0},L^{1}_{t_0}\Big]\Big\}^{1/2} \\
\lesssim & \Big\{s^{4}\Big(\frac{1}{s \zeta_1(L^{1}_{t_0}\cap\cM)}\Big)^4\Big\}^{1/2} \Big\{s^{-4}\Big(s \zeta_1(L^{1}_{t_0}\cap\cM)\Big)^2\Big\}^{1/2}\\
=& \frac{1}{s \zeta_1(L^{1}_{t_0}\cap\cM)}.
\end{align*}
Thus,
\begin{align*}
& \E\Big[\Big(\frac{s}{\lvert L^1(X_1)\rvert} \zeta_1(L^1(X_1)\cap\cM)-1\Big)^2\Biggiven \mD, D_1=1, X_1, 1 \in \cI^{1}\Big] \\
= & \E\Big[\frac{1}{s \zeta_1(L^1(X_1)\cap\cM)}\Biggiven \mD, D_1=1, X_1, 1 \in \cI^{1}\Big]
\lesssim h^{2\epsilon}.
\end{align*}

Furthermore, similar to \eqref{eq:lemma-rf-zeta}, we also have
\begin{align*}
\Big\lVert\E\Big[\Big(\frac{s}{\lvert L(X_1)\rvert} \zeta_0(L(X_1))\Big)^2\Biggiven \mD, D_1=1, X_1, 1 \in \cI^{1}\Big]\Big\rVert_\infty = O(1).
\end{align*}

The proof is thus complete.
\end{proof}

\subsection{Proof of Lemma~\ref{lemma:rf-dr-propensity-2-sum}}

\begin{proof}[Proof of Lemma~\ref{lemma:rf-dr-propensity-2-sum}]
To verify \eqref{asp:dr-propensity-2-sum} in Assumption~\ref{asp:dr-propensity}\ref{asp:dr-propensity-2}, we first notice that
\begin{align*}
& h^{d-m}\E \Big[\Big(\frac{1}{n} \sum_{i=1}^n \sum_{j:D_j=1-D_1} K_{h,j}w_{j\leftarrow i} - K_{h,i}\Big(D_i \frac{1-e(X_i)}{e(X_i)} + (1-D_1) \frac{e(X_i)}{1-e(X_i)} \Big) \Big)^2\Big] \\
\lesssim &  h^{d-m}\E \Big[\Big( \frac{1}{n} \sum_{D_i=1} \sum_{j:D_j=1-D_1} K_{h,j}w_{j\leftarrow i} - K_{h,i} \frac{1-e(X_i)}{e(X_i)} \Big)^2\Big] \\
& + h^{d-m}\E \Big[\Big( \frac{1}{n} \sum_{D_i=0} \sum_{j:D_j=1-D_1} K_{h,j}w_{j\leftarrow i} - K_{h,i}\frac{e(X_i)}{1-e(X_i)} \Big) \Big)^2\Big].
\end{align*}

The following examines the first term above, and the same approach can be extended to derive the second term. 

Let $\{L^1_{bt}\}_{t \ge 1}$ be the set of terminal leaves in $L^1_b$ for $b \in \zahl{B}$, and \eqref{eq:rf-w} can be rewritten as
\begin{align*}
    w_{i \leftarrow j}= B^{-1} \sum_{b=1}^B \sum_{t\ge1} (\lvert \{k \in \cI^{D_j}_b:X_k \in L^{D_j}_{bt}\} \rvert)^{-1}\ind(j \in \cI^{D_j}_b:X_j \in L^{D_j}_{bt}) \sum_{D_j=0}  \ind(X_i \in L^{D_j}_{bt}).
\end{align*}
To streamline our proof, we simplify the argument by assuming that, under Assumption~\ref{asp:rf-extr-honest}, $\{L^{\omega}_{bt}\}_{t\ge 1, \omega\in\{0,1\}}$ are known. In other words, all calculations of expectation or variance are conditioned on $\{L^{\omega}_{bt}\}_{t\ge 1, \omega\in\{0,1\}}$.

Therefore, for the first term,
\begin{align*}
& \frac{1}{n} h^{-\frac{d-m}{2}}\sum_{D_i=1}\sum_{j:D_j=1-D_i} K_{h,j}w_{j\leftarrow i} -K_{h,i} \frac{1-e(X_i)}{e(X_i)} \\
= & \frac{1}{n}\sum_{D_i=1} B^{-1} \sum_{b=1}^B \sum_{t\ge1} (\lvert \{k \in \cI^1_b:X_k \in L^1_{bt}\} \rvert)^{-1}\ind(1 \in \cI^1_b:X_i \in L^1_{bt}) \sum_{D_j=0}  \ind\Big(X_j \in L^1_{bt}\cap B(x, h^{1/2}d_K)\Big)\\
& h^{-m/2} \Big[ K\left(h^{-1/2}\lVert X_j-x\rVert \right) -  K\left(h^{-1/2}\lVert X_i-x\rVert \right) \Big]\ind\Big(X_i \in B(x, h^{1/2}d_K)\Big) \\
& + \frac{1}{n}\sum_{D_i=1} h^{-m/2} B^{-1} \sum_{b=1}^B \sum_{t\ge1} (\lvert \{k \in \cI^1_b:X_k \in L^1_{bt}\} \rvert)^{-1}\ind(1 \in \cI^1_b:X_i \in L^1_{bt}) K\left(h^{-1/2}\lVert X_i-x\rVert \right) \\
& \sum_{D_j=0} \Big[ \ind\Big(X_j \in L^1_{bt}\cap B(x, h^{1/2}d_K)\Big) - \ind\Big(X_j \in L^1_{bt}\Big)\Big]\\
& + \frac{1}{n}\sum_{D_i=1} h^{-m/2} B^{-1} \sum_{b=1}^B \sum_{t\ge1} (\lvert \{k \in \cI^1_b:X_k \in L^1_{bt}\} \rvert)^{-1}\ind(1 \in \cI^1_b:X_i \in L^1_{bt}) K\left(h^{-1/2}\lVert X_i-x\rVert \right) \\
& \sum_{D_j=0} \Big[\ind\Big(X_j \in L^1_{bt}\Big) - \zeta_0\Big(L^1_{bt}\cap\cM\Big)\Big]\\
& + \frac{1}{n}\sum_{D_i=1} n_0 h^{-m/2} B^{-1} \sum_{b=1}^B \sum_{t\ge1} (\lvert \{k \in \cI^1_b:X_k \in L^1_{bt}\} \rvert)^{-1}\ind(1 \in \cI^1_b:X_i \in L^1_{bt})  K\left(h^{-1/2}\lVert X_i-x\rVert \right) \\
& \Big[\zeta_0\Big(L^1_{bt}\cap\cM\Big) - \frac{g_{0,x}(Z_i)}{g_{1,x}(Z_i)} \zeta_1\Big(L^1_{bt}\cap\cM\Big)\Big]\\
& + \frac{1}{n}\sum_{D_i=1} \frac{n_0}{n_1} \frac{g_{0,x}(Z_i)}{g_{1,x}(Z_i)} h^{-m/2} K\left(h^{-1/2}\lVert X_i-x\rVert \right) \Big[n_1 B^{-1} \sum_{b=1}^B \sum_{t\ge1} (\lvert \{k \in \cI^1_b:X_k \in L^1_{bt}\} \rvert)^{-1}\ind(1 \in \cI^1_b:X_i \in L^1_{bt}) \\
& \zeta_1(L^1_{bt}\cap\cM) - 1\Big] \\
& + \frac{1}{n}\sum_{D_i=1} h^{-m/2} K\left(h^{-1/2}\lVert X_i-x\rVert \right) \left(\frac{n_0}{n_1} \frac{g_{0,x}\left(Z_i\right)}{g_{1,x}\left(Z_i\right)} - \frac{1-e\left(X_i\right)}{e\left(X_i\right)}\right) \\
& + \frac{1}{n}\sum_{D_i=1} B^{-1} \sum_{b=1}^B \sum_{t\ge1} (\lvert \{k \in \cI^1_b:X_k \in L^1_{bt}\} \rvert)^{-1}\ind(1 \in \cI^1_b:X_i \in L^1_{bt}) \sum_{D_j=0}  \ind\Big(X_j \in L^1_{bt}\cap B(x, h^{1/2}d_K)\Big)\\
& h^{-m/2} K\left(h^{-1/2}\lVert X_j-x\rVert \right) \ind\Big(X_i \in B^c(x, h^{1/2}d_K)\Big) \\
=:& S_1 + S_2 + S_3 + S_4 + S_5 + S_6 +  S_7.
\end{align*}

According to Jensen's Inequality,
\begin{equation}\label{eq:rf-consistency:7part}
    h^{d-m}\E \Big[\Big( \frac{1}{n} \sum_{D_i=1} \sum_{j:D_j=1-D_1} K_{h,j}w_{j\leftarrow i} - K_{h,i} \frac{1-e(X_i)}{e(X_i)} \Big)^2\Big]\lesssim \sum_{i=1}^7 \E \Big[ {S_i}^2 \Big].
\end{equation}


{\bf Part 1.}
For $S_1$, by Assumption~\ref{asp:rf1}\ref{asp:rf1-2} we notice that
\begin{align*}
\lvert S_1\rvert \leq & \frac{1}{n}\sum_{D_i=1} B^{-1} \sum_{b=1}^B \sum_{t\ge1} (\lvert \{k \in \cI^1_b:X_k \in L^1_{bt}\} \rvert)^{-1}\ind(1 \in \cI^1_b:X_i \in L^1_{bt}) \sum_{D_j=0}  \ind\Big(X_j \in L^1_{bt}\cap B(x, h^{1/2}d_K)\Big)\\
& h^{-m/2} \Big\lvert K\left(h^{-1/2}\lVert X_j-x\rVert \right) -  K\left(h^{-1/2}\lVert X_i-x\rVert \right) \Big\rvert\ind\Big(X_i \in B(x, h^{1/2}d_K)\Big) \\
\lesssim & \frac{1}{n}\sum_{D_i=1} B^{-1} \sum_{b=1}^B \sum_{t\ge1} (\lvert \{k \in \cI^1_b:X_k \in L^1_{bt}\} \rvert)^{-1}\ind(1 \in \cI^1_b:X_i \in L^1_{bt}) \sum_{D_j=0}  \ind\Big(X_j \in L^1_{bt}\cap B(x, h^{1/2}d_K)\Big)\\
& h^{-m/2} h^{-1/2} \lVert X_j-X_i \rVert \ind\Big(X_i \in B(x, h^{1/2}d_K)\Big) \\
\lesssim & \frac{1}{n}\sum_{D_i=1} B^{-1} \sum_{b=1}^B \sum_{t\ge1} (\lvert \{k \in \cI^1_b:X_k \in L^1_{bt}\} \rvert)^{-1}\ind(1 \in \cI^1_b:X_i \in L^1_{bt}) \sum_{D_j=0}  \ind\Big(X_j \in L^1_{bt}\cap B(x, h^{1/2}d_K)\Big)\\
& h^{-(m+1)/2} {\rm diam}(L^1(X_i)\cap \cM)) \ind\Big(X_i \in B(x, h^{1/2}d_K)\Big) \\
\lesssim & \frac{1}{n h^{m/2}} h^{\epsilon} \sum_{D_i=1} B^{-1} \sum_{b=1}^B \sum_{t\ge1} (\lvert \{k \in \cI^1_b:X_k \in L^1_{bt}\} \rvert)^{-1}\ind(1 \in \cI^1_b:X_i \in L^1_{bt})\sum_{D_j=0}  \ind\Big(X_j \in L^1_{bt}\cap B(x, h^{1/2}d_K)\Big)\\
= & \frac{1}{n h^{m/2}} h^{\epsilon} \sum_{D_i=1} \sum_{D_j=0} w_{j\leftarrow i} \ind\Big(X_j \in B(x, h^{1/2}d_K)\Big)\\
=& \frac{1}{n h^{m/2}} h^{\epsilon} \sum_{D_j=0} \ind\Big(X_j \in B(x, h^{1/2}d_K)\Big). \yestag\label{eq:rf-consistency1:s1}
\end{align*}

Thus, 
\begin{align*}
\E[ S^2_1  ]
= h^{2\epsilon}\E \Big[ \Big\{\frac{1}{n h^{m/2}}  \sum_{D_j=0} \ind\Big(X_j \in B(x, h^{1/2}d_K)\Big) \Big\}^2\Big] 
\lesssim h^{2\epsilon}.\yestag\label{eq:rf-consistency1:result}
\end{align*}

{\bf Part 2.}
According to Assumption~\ref{asp:rf1}\ref{asp:rf1-2}, there exists $c_{\epsilon}>0$, such that ${\rm diam}(L_t) \leq c_{\epsilon} h^{1/2+\epsilon}$. Now define 
\begin{align*}
B^0(x, h^{1/2}d_K):=& \{\tilde{x}: h^{1/2}d_K-c_{\epsilon}h^{1/2+\epsilon}\leq \lVert \tilde{x}-x \rVert \leq h^{1/2}d_K\},\\
B^1(x, h^{1/2}d_K):=& \{\tilde{x}: h^{1/2}d_K \leq \lVert \tilde{x}-x \rVert \leq h^{1/2}d_K+c_{\epsilon}h^{1/2+\epsilon}\}.
\end{align*}
Therefore,
\begin{align*}
\lvert S_2\rvert
= &  \frac{1}{n}\sum_{D_i=1} h^{-m/2} B^{-1} \sum_{b=1}^B \sum_{t\ge1} (\lvert \{k \in \cI^1_b:X_k \in L^1_{bt}\} \rvert)^{-1}\ind(1 \in \cI^1_b:X_i \in L^1_{bt}) K\left(h^{-1/2}\lVert X_i-x\rVert \right) \\
& \sum_{D_j=0} \ind\Big(X_j \in L^1_{bt}\cap B^c(x, h^{1/2}d_K)\Big)\\
\lesssim &  \frac{1}{n}\sum_{D_i=1} h^{-m/2} \ind\Big(X_i \in B^0(x, h^{1/2}d_K)\Big) B^{-1} \sum_{b=1}^B \sum_{t\ge1} (\lvert \{k \in \cI^1_b:X_k \in L^1_{bt}\} \rvert)^{-1}\ind(1 \in \cI^1_b:X_i \in L^1_{bt})\\
& \sum_{D_j=0} \ind\Big(X_j \in L^1_{bt}\cap B^c(x, h^{1/2}d_K)\Big)
=: \tilde{S}_2.
\end{align*}

And
\begin{equation}\label{eq:rf-consistency2:2part}
\E[S_2^2\given \mD] \lesssim \E[\{\E[ \tilde{S}_2 \given \mD,\mX_1]\}^2\given\mD] + \E[\Var[ \tilde{S}_2 \given \mD,\mX_1]\given\mD].
\end{equation}

For the first term in \eqref{eq:rf-consistency2:2part},
\begin{align*}
& \E[\{\E[ \tilde{S}_2 \given \mD,\mX_1]\}^2\given\mD] \\ 
=  & \E\Big[\Big\{\E\Big[\frac{1}{n}\sum_{D_i=1} h^{-m/2} \ind\Big(X_i \in B^0(x, h^{1/2}d_K)\Big) B^{-1} \sum_{b=1}^B \sum_{t\ge1} (\lvert \{k \in \cI^1_b:X_k \in L^1_{bt}\} \rvert)^{-1}\ind(1 \in \cI^1_b:X_i \in L^1_{bt}) \\
& \E\Big[\sum_{D_j=0} \ind\Big(X_j \in L^1_{bt}\cap B^c(x, h^{1/2}d_K)\Big)\Biggiven \mD,\mX_1,\{\cI^1_b\}_{b=1}^B,\{L^1_{bt}\}_{t \ge 1}\Big]\Biggiven \mD,\mX_1\Big]\Big\}^2\Biggiven \mD\Big] \\
\leq & \E\Big[\Big\{\sum_{D_i=1} h^{-m/2} \ind\Big(X_i \in B^0(x, h^{1/2}d_K)\Big) \E\Big[\sum_{t\ge1} (\lvert \{k \in \cI^1:X_k \in L^1_{t}\} \rvert)^{-1} \ind(i \in \cI^1:X_i \in L^1_{t}) \\
& \zeta_0\Big(L^1_{t}\cap B^c(x, h^{1/2}d_K)\cap\cM\Big)\Biggiven \mD,\mX_1\Big]\Big\}^2\Biggiven \mD\Big] \\
\leq & \E\Big[\Big\{\sum_{D_i=1} h^{-m/2} \ind\Big(X_i \in B^0(x, h^{1/2}d_K)\Big) \E\Big[\frac{\ind(i \in I^1)}{\lvert L^1(X_i)\rvert} \zeta_0\Big(L^1(X_i)\cap\cM\Big)\Biggiven \mD,\mX_1\Big]\Big\}^2\Biggiven \mD\Big] \\
= & \E\Big[\Big\{\frac{1}{n_1} \sum_{D_i=1} h^{-m/2} \ind\Big(X_i \in B^0(x, h^{1/2}d_K)\Big) \E\Big[\frac{s}{\lvert L^1(X_i)\rvert}\zeta_0\Big(L^1(X_i)\cap\cM\Big)\Biggiven \mD,D_i=1,\mX_1,i\in \cI^1\Big]\Big\}^2\Biggiven \mD\Big]\\
= & \frac{1}{n_1h^{m/2}} \E\Big[ h^{-m/2} \ind\Big( X_1 \in B^0(x, h^{1/2}d_K)\Big)\Big\{\E\Big[\frac{s}{\lvert L^1(X_1)\rvert} \zeta_0\Big(L^1(X_1)\cap\cM\Big)\Biggiven \mD,D_1=1,\mX_1,1\in \cI^1\Big]\Big\}^2\Biggiven \mD,D_1=1\Big]\\
& + \frac{n_1-1}{n_1} \E\Big[ h^{-m} \ind\Big( X_1 \in B^0(x, h^{1/2}d_K)\Big) \ind\Big( X_2 \in B^0(x, h^{1/2}d_K)\Big) \\
& \E\Big[\frac{s}{\lvert L^1(X_1)\rvert} \zeta_0\Big(L^1(X_1)\cap\cM\Big)\Biggiven \mD,D_1=1,\mX_1,1\in \cI^1\Big] \\
& \E\Big[\frac{s}{\lvert L^1(X_2)\rvert} \zeta_0\Big(L^1(X_2)\cap\cM\Big)\Biggiven \mD,D_2=1,\mX_1,2\in \cI^1\Big]\Biggiven \mD, D_1=D_2=1\Big] \\
\lesssim & \frac{1}{n_1h^{m/2}} h^{\epsilon} \E\Big[\Big\{\frac{s}{\lvert L^1(X_1)\rvert} \zeta_0\Big(L^1(X_1)\cap\cM\Big)\Big\}^2\Biggiven \mD,D_1=1,X_1 \in B^0(x, h^{1/2}d_K), 1\in \cI^1 \Big]\\
& + h^{\epsilon}\E\Big[\E\Big[\frac{s}{\lvert L^1(X_1)\rvert} \zeta_0\Big(L^1(X_1)\cap\cM\Big)\Biggiven \mD,D_1=1,\mX_1,1\in \cI^1\Big] \E\Big[\frac{s}{\lvert L^1(X_2)\rvert} \zeta_0\Big(L^1(X_2)\cap\cM\Big)\\
& \Biggiven \mD,D_2=1,\mX_1,2\in \cI^1\Big] \Biggiven \mD, D_1=D_2=1,X_1\in B^0(x, h^{1/2}d_K),X_2\in B^0(x, h^{1/2}d_K)\Big] \\
\lesssim & \frac{1}{n_1h^{m/2}} h^{\epsilon} \E\Big[\Big\{\frac{s}{\lvert L^1(X_1)\rvert} \zeta_0\Big(L^1(X_1)\cap\cM\Big)\Big\}^2\Biggiven \mD,D_1=1,X_1 \in B^0(x, h^{1/2}d_K), 1\in \cI^1 \Big]\\
& + h^{\epsilon}\Big\{\E\Big[\Big\{\frac{s}{\lvert L^1(X_1)\rvert} \zeta_0\Big(L^1(X_1)\cap\cM\Big)\Big\}^{2}\Biggiven \mD, D_1=D_2=1,X_1\in B^0(x, h^{1/2}d_K),X_2\in B^0(x, h^{1/2}d_K)\Big]\Big\}^{1/2} \\
& \Big\{\E\Big[\Big\{\frac{s}{\lvert L^1(X_2)\rvert} \zeta_0\Big(L^1(X_2)\cap\cM\Big)\Big\}^{2}\Biggiven \mD, D_1=D_2=1,X_1\in B^0(x, h^{1/2}d_K),X_2\in B^0(x, h^{1/2}d_K)\Big]\Big\}^{1/2} \\
\lesssim & \Big(1+\frac{1}{n_1h^{m/2}}\Big)h^{\epsilon}, \yestag\label{eq:rf-consistency2:e}
\end{align*}
where we leverage the insight that the maximum difference between empirical distribution functions, built on $s-2$ sample points versus the same $s-2$ points plus an additional 1-2 sample points, is guaranteed to be less than $2/(s-2)$, and thus, similar to Lemma~\ref{lemma:rf-clt}, we have
\begin{equation}\label{eq:rf:honest-x2}
    \Big\lVert\E\Big[\Big(\frac{s}{\lvert L^{1}(X_1)\rvert} \zeta_1(L^{1}(X_1) \cap\cM)-1\Big)^2\Biggiven \mD, D_1=1,X_1,X_2,1\in\cI^1,2\in\cI^1\Big]\Big\rVert_\infty = O(h^{2\epsilon}).
\end{equation}

For the second term in \eqref{eq:rf-consistency2:2part},
\begin{align*}
& \E[\Var[ \tilde{S}_2 \given \mD,\mX_1]\given\mD] \\
= & B^{-1} \E\Big[\Var\Big[ \frac{1}{n}\sum_{D_i=1} h^{-m/2} \ind\Big(X_i \in B^0(x, h^{1/2}d_K)\Big) \sum_{t\ge1} (\lvert \{k \in \cI^1:X_k \in L^1_{t}\} \rvert)^{-1}\ind(1 \in \cI^1:X_i \in L^1_{t}) \\
& \sum_{D_j=0} \ind\Big(X_j \in L^1_{t}\cap B^c(x, h^{1/2}d_K)\Big) \Biggiven  \mD,\mX_1\Big]\Biggiven \mD\Big]\\
\leq & B^{-1} \E\Big[\E\Big[ \frac{1}{n} \sum_{D_i=1} h^{-m} \ind\Big(X_i \in B^0(x, h^{1/2}d_K)\Big) \sum_{t\ge1} (\lvert \{k \in \cI^1:X_k \in L^1_{t}\} \rvert)^{-2}\ind(1 \in \cI^1:X_i \in L^1_{t}) \\
& \Big\{\sum_{D_j=0} \ind\Big(X_j \in L^1_{t}\cap B^c(x, h^{1/2}d_K)\Big)\Big\}^2 \Biggiven  \mD,\mX_1\Big]\Biggiven \mD\Big]\\
\lesssim & B^{-1} \E\Big[ \frac{1}{n} \sum_{D_i=1} h^{-m} \sum_{L^1_t \cap B(x, h^{1/2}d_K) \neq\emptyset} (\lvert \{k \in \cI^1:X_k \in L^1_{t}\} \rvert)^{-2}\ind(1 \in \cI^1:X_i \in L^1_{t}) \\
& \E\Big[\Big\{\sum_{D_j=0} \ind\Big(X_j \in L^1_{t}\cap B^c(x, h^{1/2}d_K)\Big)\Big\}^2 \Biggiven \mD,\mX_1,\{\cI^1\},\{L^1_{t}\}_{t \ge 1} \Big]\Biggiven \mD\Big]\\
\lesssim & B^{-1} \E\Big[ \frac{1}{n} \sum_{D_i=1} h^{-m} \sum_{L^1_t \cap B(x, h^{1/2}d_K) \neq\emptyset} (\lvert \{k \in \cI^1:X_k \in L^1_{t}\} \rvert)^{-2}\ind(1 \in \cI^1:X_i \in L^1_{t}) \\
& \Big\{n_0^2 \zeta_0^2\Big(L^1_{t}\cap B^c(x, h^{1/2}d_K)\Big)+ n_0 \zeta_0\Big(L^1_{t}\cap B^c(x, h^{1/2}d_K)\Big)\Big\}\Biggiven \mD\Big]\\
\lesssim & \E\Big[ \frac{1}{n} \sum_{D_i=1} h^{-m} \sum_{L^1_t \cap B(x, h^{1/2}d_K) \neq\emptyset}(\lvert \{k \in \cI^1:X_k \in L^1_{t}\} \rvert)^{-1}\ind(1 \in \cI^1:X_i \in L^1_{t}) \\
& \Big\{n_0 \zeta_0^2\Big(L^1_{t}\cap B^c(x, h^{1/2}d_K)\Big)+ \zeta_0\Big(L^1_{t}\cap B^c(x, h^{1/2}d_K)\Big)\Big\}\Biggiven \mD\Big]\\
= & \E\Big[ h^{-m} \sum_{L^1_t \cap B(x, h^{1/2}d_K) \neq\emptyset} \zeta_0^2\Big(L^1_{t}\cap B^c(x, h^{1/2}d_K)\Big) + \frac{1}{nh^m} \sum_{L^1_t \cap B(x, h^{1/2}d_K) \neq\emptyset} \zeta_0\Big(L^1_{t}\cap B^c(x, h^{1/2}d_K)\Big) \Biggiven \mD\Big]\\
\lesssim & \E\Big[ h^{-m/2} \sum_{L^1_t \cap B(x, h^{1/2}d_K) \neq\emptyset} \zeta_0\Big(L^1_{t}\cap B^c(x, h^{1/2}d_K)\Big)\Biggiven \mD\Big]
\lesssim h^{\epsilon}. \yestag\label{eq:rf-consistency2:var}
\end{align*}

According to the inverse moments of a binomial random variable \citep[Page 275]{cribari2000note},
\begin{equation}\label{eq:n-inverse}
    \E\Big[\Big(\frac{n_0}{n_1}\Big)^2\Biggiven D_1\Big] \lesssim n^2 \E[n_1^{-2}\given D_1] =O(1).
\end{equation}
Thus, we conclude that
\begin{equation}\label{eq:rf-consistency2:result}
    \E[\lvert S_2\rvert^2\given \mD] \lesssim h^{\epsilon}.
\end{equation}

{\bf Part 3.} 
Given Assumption~\ref{asp:rf1}\ref{asp:rf1-3}, we have
\begin{align*}
& \E \Big[ S_3 \Biggiven \mD,\mX_1,\{\cI^1_b\}_{b=1}^B,\{L^1_{bt}\}_{t \ge 1}\Big] \\
= & \frac{1}{n}\sum_{D_i=1} h^{-m/2} B^{-1} \sum_{b=1}^B \sum_{t\ge1} (\lvert \{k \in \cI^1_b:X_k \in L^1_{bt}\} \rvert)^{-1}\ind(1 \in \cI^1_b:X_i \in L^1_{bt}) K\left(h^{-1/2}\lVert X_i-x\rVert \right) \\
& \E \Big[\sum_{D_j=0}[\ind(X_j \in L^1_{bt}) - \zeta_0(L^1_{bt})] \Biggiven \mD,\mX_1,\{\cI^1_b\}_{b=1}^B,\{L^1_{bt}\}_{t \ge 1}\Big]
= 0.
\end{align*}

Then, using the law of total variance, we obtain
\begin{align*}
& \E[ S^2_3 ] = \Var[ S_3 ]
= \E[\Var[ S_3 \given \mX_1,\mD] ] \\
\leq & \frac{1}{Bh^m}\frac{1}{n}\E\Big[\sum_{D_i=1}K^2\left(h^{-1/2}\lVert X_i-x\rVert \right)\Var\Big[  \sum_{t\ge1} (\lvert \{k \in \cI^1:X_k \in L^1_{t}\} \rvert)^{-1}\ind(1 \in \cI^1:X_i \in L^1_{t})  \\
& \sum_{D_j=0} \Big[\ind\Big(X_j \in L^1_{t}\Big) - \zeta_0\Big(L^1_{t}\cap\cM\Big)\Big] \Biggiven \mD,\mX_1\Big]\Big] \\
= & \frac{1}{Bh^m}\frac{1}{n}\E\Big[\sum_{D_i=1}K^2\left(h^{-1/2}\lVert X_i-x\rVert \right)\E\Big[\sum_{t\ge1} (\lvert \{k \in \cI^1:X_k \in L^1_{t}\} \rvert)^{-2}\ind(1 \in \cI^1:X_i \in L^1_{t})  \\
& \Var\Big[\sum_{D_j=0} \Big[\ind\Big(X_j \in L^1_{t}\Big) - \zeta_0\Big(L^1_{t}\cap\cM\Big)\Big]\Biggiven \mD,\mX_1,\cI^1,\{L^1_{t}\}_{t \ge 1}\Big] \Biggiven \mD,\mX_1\Big]\Big] \\
\leq & \frac{1}{Bh^m}\frac{n_0}{n}\E\Big[\sum_{D_i=1}K^2\left(h^{-1/2}\lVert X_i-x\rVert \right)\sum_{t\ge1} (\lvert \{k \in \cI^1:X_k \in L^1_{t}\} \rvert)^{-2}\ind(1 \in \cI^1:X_i \in L^1_{t})\zeta_0(L^1_{t}\cap\cM) \Big] \\
\lesssim & \frac{1}{Bh^m}\E\Big[\sum_{D_i=1}\sum_{L^1_t \cap B(x, h^{1/2}d_K) \neq\emptyset} (\lvert \{k \in \cI^1:X_k \in L^1_{t}\} \rvert)^{-1}\ind(1 \in \cI^1:X_i \in L^1_{t})\zeta_0(L^1_{t}\cap\cM)\Big] \\
\leq & \frac{n}{B}\frac{1}{nh^m}\E\Big[\sum_{L^1_t \cap B(x, h^{1/2}d_K) \neq\emptyset}\zeta_0(L^1_{t}\cap\cM)\Big]\\
\lesssim& \frac{1}{nh^{m/2}}. \yestag\label{eq:rf-consistency3:result}
\end{align*}

{\bf Part 4.}
Given Assumption~\ref{asp:manifold} and Lemma~\ref{lemma:gbound}, we can conclude that $g_{0,x}/g_{1,x}$ and $\psi$ are Lipchitz continuous on the $\supp\left(g\right)$. This is because $g_{1,x}$ bounded away from zero implies that $\supp\left(g_{1,x}\right)$ must be bounded. Consequently, for any $x_1,x_2 \in \supp(\zeta)$, the following holds:
$$
\lvert g_{0,x}(z_1)/g_{1,x}(z_1) - g_{0,x}(z_2)/g_{1,x}(z_2) \rvert \lesssim \lVert x_1-x_2 \rVert.
$$
Also, we notice that for any $x \in Supp(\zeta)\cap\cM$ and $\omega \in \{0,1\}$,
\begin{align*}
    \frac{p\left(x\given D\right) P\left(D\right)}{p\left(x\right)}
    = P\left(D \given x\right)
    = P\left(D \given \psi(x)\right)
    = \frac{p\left(\psi(x)\given D\right) P\left(D\right)}{p\left(\psi(x)\right)}.
\end{align*}
Therefore,
\begin{equation*}
    \frac{\d \zeta_0(x)}{\d \zeta_1(x)} = \frac{p\left(x\given D=0\right)}{p\left(x\given D=1\right)} =\frac{p\left(\psi(x)\given D=0\right)}{p\left(\psi(x)\given D=1\right)} = \frac{g_{0,x}(z)}{g_{1,x}(z)}.
\end{equation*}
Thus, by Assumption~\ref{asp:rf1}\ref{asp:rf1-2}, we obtain
\begin{align*}
& \Big\lvert\zeta_0(L^1_{t}\cap\cM) - \frac{g_{0,x}(Z_1)}{g_{1,x}(Z_1)} \zeta_1(L^1_{t}\cap\cM)\Big\rvert
= \Big\lvert \int_{L^1_{t}\cap\cM} 1 \d \zeta_0(\tilde{x}) - \frac{g_{0,x}(Z_1)}{g_{1,x}(Z_1)} \int_{L^1_{t}\cap\cM} 1 \d \zeta_1(\tilde{x}) \Big\rvert\\
\leq & \int_{L^1_{t}\cap\cM} \Big\lvert\frac{\d \zeta_0(x)}{\d \zeta_1(x)}- \frac{g_{0,x}(Z_1)}{g_{1,x}(Z_1)}\Big\rvert \d \zeta_1(\tilde{x})
= \int_{L^1_{t}\cap\cM} \Big\lvert\frac{g_{0,x}(z)}{g_{1,x}(z)}- \frac{g_{0,x}(Z_1)}{g_{1,x}(Z_1)}\Big\rvert \d \zeta_1(\tilde{x})\\
\lesssim & \int_{L^1_{t}\cap\cM}  {\rm diam}(L^1_{t}\cap \cM) \d \zeta_1(\tilde{x})
\leq h^{1/2+\epsilon} \zeta_1(L^1_{t}\cap \cM). \yestag\label{eq:rf-consistency4:leaf}
\end{align*}

Given Assumption~\ref{asp:rf1}\ref{asp:rf1-2}, define
\begin{align*}
\tilde{S}_4 & := \frac{1}{n}\sum_{D_i=1} h^{-m/2}  B^{-1} \sum_{b=1}^B \sum_{t\ge1} (\lvert \{k \in \cI^1_b:X_k \in L^1_{bt}\} \rvert)^{-1}\ind(1 \in \cI^1_b:X_i \in L^1_{bt})  \\
& K\left(h^{-1/2}\lVert X_i-x\rVert \right) n_0 \zeta_1(L^1_{t}\cap \cM).
\end{align*}
Then $\lvert S_4 \rvert \lesssim h^{1/2+\epsilon} \tilde{S}_4$, and
\begin{equation}\label{eq:rf-consistency4:2part-1}
\E[S_4^2\given \mD] \lesssim h^{1+\epsilon} \E[\{\E[ \tilde{S}_4 \given \mD,\mX_1]\}^2\given\mD] + h^{1+\epsilon} \E[\Var[ \tilde{S}_4 \given \mD,\mX_1]\given\mD].
\end{equation}

For the first term in \eqref{eq:rf-consistency4:2part-1}, similar to \eqref{eq:rf-consistency2:e}, we obtain that
\begin{equation}\label{eq:rf-consistency4:e}
    \E[\{\E[ \tilde{S}_4 \given \mD,\mX_1]\}^2\given\mD] = O(1).
\end{equation}

For the second term in \eqref{eq:rf-consistency4:2part-1},
\begin{align*}
& \E[\Var[ \tilde{S}_4 \given \mD,\mX_1]\given\mD] \\
= & B^{-1} \frac{n_0^2}{n^2} h^{-m} \E\Big[\Var\Big[\sum_{D_i=1} \sum_{t\ge1} (\lvert \{k \in \cI^1:X_k \in L^1_{t}\} \rvert)^{-1}\ind(1 \in \cI^1:X_i \in L^1_{t})  K\left(h^{-1/2}\lVert X_i-x\rVert \right) \\
& \zeta_1(L^1_{t}\cap \cM)\Biggiven \mD,\mX_1\Big]\Biggiven\mD\Big] \\
\leq & \frac{n_1}{B} h^{-m} \E\Big[\sum_{D_i=1} K^2\left(h^{-1/2}\lVert X_i-x\rVert \right) \sum_{t\ge1} (\lvert \{k \in \cI^1:X_k \in L^1_{t}\} \rvert)^{-2}\ind(1 \in \cI^1:X_i \in L^1_{t})   \\
& \zeta_1^2(L^1_{t}\cap \cM)\Biggiven\mD\Big]
\\
\lesssim & h^{-m} \E\Big[\sum_{D_i=1} \sum_{L^1_t \cap B(x, h^{1/2}d_K) \neq\emptyset} (\lvert \{k \in \cI^1:X_k \in L^1_{t}\} \rvert)^{-1}\ind(1 \in \cI^1:X_i \in L^1_{t})\zeta_1^2(L^1_{t}\cap\cM)\Biggiven\mD\Big]\\
= & h^{-m} \E\Big[\sum_{L^1_t \cap B(x, h^{1/2}d_K) \neq\emptyset} \zeta_1^2(L^1_{t}\cap\cM)\Biggiven\mD\Big]\\
\lesssim & h^{-m} \E\Big[ \lVert {\rm diam}^m(L_t\cap\cM) \rVert_\infty \Big(\sum_{L^1_t \cap B(x, h^{1/2}d_K) \neq\emptyset} \zeta_1(L^1_{t}\cap\cM)\Big)\Biggiven\mD\Big]
= O(h^{\epsilon}).\yestag\label{eq:rf-consistency4:var}
\end{align*}

Therefore, we conclude that
\begin{equation}\label{eq:rf-consistency4:result}
    \E[S_4^2] = O(h^{1+2\epsilon}).
\end{equation}

{\bf Part 5.}
For $S_5$, define
\begin{align*}
\tilde{S}_5 & := \frac{1}{n_1}\sum_{D_i=1} h^{-m/2} K\left(h^{-1/2}\lVert X_i-x\rVert \right) \Big[n_1 B^{-1} \sum_{b=1}^B \sum_{t\ge1} (\lvert \{k \in \cI^1_b:X_k \in L^1_{bt}\} \rvert)^{-1}\ind(1 \in \cI^1_b:X_i \in L^1_{bt}) \\
& \zeta_1(L^1_{bt}\cap\cM) - 1\Big],
\end{align*}
and
\begin{equation}\label{eq:rf-consistency5:2part-1}
\E[S_5^2\given \mD] \lesssim \E[\{\E[ \tilde{S}_5 \given \mD,\mX_1]\}^2\given\mD] +  \E[\Var[ \tilde{S}_5 \given \mD,\mX_1]\given\mD].
\end{equation}

For the first term in \eqref{eq:rf-consistency5:2part-1},
\begin{align*}
& \E[\{\E[ \tilde{S}_5 \given \mD,\mX_1]\}^2\given\mD] \\ 
\leq & \E\Big[\Big\{\frac{1}{n_1}\sum_{D_i=1} h^{-m/2} K\left(h^{-1/2}\lVert X_i-x\rVert \right) \E\Big[\frac{s}{\lvert L^1(X_i)\rvert}\zeta_1\Big(L^1(X_i)\cap\cM\Big)-1\Biggiven \mD,D_i=1,\mX_1,i\in \cI^1\Big]\Big\}^2\Biggiven \mD\Big]\\
= & \frac{1}{n_1h^{m/2}} \E\Big[ h^{-m/2} K\left(h^{-1/2}\lVert X_1-x\rVert \right)\Big\{\E\Big[\frac{s}{\lvert L^1(X_1)\rvert} \zeta_0\Big(L^1(X_1)\cap\cM\Big)-1\Biggiven \mD,D_1=1,\mX_1,1\in \cI^1\Big]\Big\}^2\Biggiven \mD,D_1=1\Big]\\
& + \frac{n_1-1}{n_1} \E\Big[ h^{-m} K\left(h^{-1/2}\lVert X_1-x\rVert \right) K\left(h^{-1/2}\lVert X_2-x\rVert \right) \\
& \E\Big[\frac{s}{\lvert L^1(X_1)\rvert} \zeta_0\Big(L^1(X_1)\cap\cM\Big)-1\Biggiven \mD,D_1=1,\mX_1,1\in \cI^1\Big] \\
& \E\Big[\frac{s}{\lvert L^1(X_2)\rvert} \zeta_0\Big(L^1(X_2)\cap\cM\Big)-1\Biggiven \mD,D_2=1,\mX_1,2\in \cI^1\Big]\Biggiven \mD, D_1=D_2=1\Big] \\
\lesssim & \frac{1}{n_1h^{m/2}} \E\Big[\Big\{\frac{s}{\lvert L^1(X_1)\rvert} \zeta_0\Big(L^1(X_1)\cap\cM\Big)-1\Big\}^2\Biggiven \mD,D_1=1,X_1 \in B(x, h^{1/2}d_K), 1\in \cI^1 \Big]\\
& + \E\Big[\E\Big[\frac{s}{\lvert L^1(X_1)\rvert} \zeta_0\Big(L^1(X_1)\cap\cM\Big)-1\Biggiven \mD,D_1=1,\mX_1,1\in \cI^1\Big] \E\Big[\frac{s}{\lvert L^1(X_2)\rvert} \zeta_0\Big(L^1(X_2)\cap\cM\Big)-1\\
& \Biggiven \mD,D_2=1,\mX_1,2\in \cI^1\Big] \Biggiven \mD, D_1=D_2=1,X_1\in B(x, h^{1/2}d_K),X_2\in B(x, h^{1/2}d_K)\Big] \\
\lesssim & \frac{1}{n_1h^{m/2}} \E\Big[\Big\{\frac{s}{\lvert L^1(X_1)\rvert} \zeta_0\Big(L^1(X_1)\cap\cM\Big)-1\Big\}^2\Biggiven \mD,D_1=1,X_1 \in B(x, h^{1/2}d_K), 1\in \cI^1 \Big]\\
& + \Big\{\E\Big[\Big\{\frac{s}{\lvert L^1(X_1)\rvert} \zeta_0\Big(L^1(X_1)\cap\cM\Big)-1\Big\}^{2}\Biggiven \mD, D_1=D_2=1,X_1\in B(x, h^{1/2}d_K),X_2\in B(x, h^{1/2}d_K)\Big]\Big\}^{1/2} \\
& \Big\{\E\Big[\Big\{\frac{s}{\lvert L^1(X_2)\rvert} \zeta_0\Big(L^1(X_2)\cap\cM\Big)-1\Big\}^{2}\Biggiven \mD, D_1=D_2=1,X_1\in B(x, h^{1/2}d_K),X_2\in B(x, h^{1/2}d_K)\Big]\Big\}^{1/2}, \\
\lesssim & \Big(1+\frac{1}{n_1h^{m/2}}\Big)h^{2\epsilon}. \yestag\label{eq:rf-consistency5:e}
\end{align*}

For the second term in \eqref{eq:rf-consistency5:2part-1}, similar to \eqref{eq:rf-consistency4:var}, we obtain
\begin{align*}
& \E[\Var[\tilde{S}_5 \given \mX_1,\mD] \given \mD] \\
= & \E\Big[\Var\Big[\frac{1}{n_1}\sum_{D_i=1} h^{-m/2} K\left(h^{-1/2}\lVert X_i-x\rVert \right) n_1 B^{-1} \sum_{b=1}^B \sum_{t\ge1} (\lvert \{k \in \cI^1_b:X_k \in L^1_{bt}\} \rvert)^{-1}\ind(1 \in \cI^1_b:X_i \in L^1_{bt}) \\
& \zeta_1(L^1_{bt}\cap\cM) \Biggiven \mD,\mX_1\Big] \Biggiven \mD\Big]
\lesssim h^{\epsilon}. \yestag\label{eq:rf-consistency5:var}
\end{align*}

Therefore, we conclude that
\begin{equation}\label{eq:rf-consistency5:result}
    \E[S_5^2\given D_1] = O(h^{\epsilon}).
\end{equation}

{\bf Part 6.} 
Notice that
\begin{align*}
e\left(X_1\right) = \P\left(D_1=1\Biggiven X=X_1\right)
= \P\left(D_1=1\Biggiven Z=Z_1\right)
= \frac{g_{1,x}\left(Z_1\right) \P\left(D=1\right)}{g\left(Z_1\right)}.
\end{align*}
By Assumption~\ref{asp:manifold}, we have
\begin{align*}
S_6 = & \frac{1}{n}\sum_{D_i=1} h^{-m/2} K\left(h^{-1/2}\lVert X_i-x\rVert \right) \frac{g_{0,x}\left(Z_i\right)}{g_{1,x}\left(Z_i\right)} \left(\frac{n_0}{n_1}-\frac{\P(D=0)}{\P(D=1)}\right) \\
\lesssim & \frac{1}{n}\sum_{D_i=1} h^{-m/2} K\left(h^{-1/2}\lVert X_i-x\rVert \right) \Big(\frac{n_0}{n_1}-\frac{\P(D=0)}{\P(D=1)}\Big)
=: \tilde{S}_6.
\end{align*}

Thus given $n_1\ge1$,
\begin{align*}
& \E[S_6^2] \lesssim \E[\tilde{S}_6^2] \\
= & \E\Big[\Big(\frac{n_0}{n_1}-\frac{\P(D=0)}{\P(D=1)}\Big)^2\E\Big[\Big\{\frac{1}{n}\sum_{D_i=1} h^{-m/2} K\left(h^{-1/2}\lVert X_i-x\rVert \right)\Big\}^2\Biggiven \mD\Big]\Big]\\
= & \E\Big[\Big(\frac{n_0}{n_1}-\frac{\P(D=0)}{\P(D=1)}\Big)^2\Big]
= \E\Big[\left(\frac{\frac{n_0}{n} \P(D=1)-\frac{n_1}{n} \P(D=0)}{\frac{n_1}{n} \P(D=1)}\right)^2 \Big]\\
\lesssim & \E\Big[\left(\frac{n}{n_1}\right)^{2}\left(\frac{n_0}{n} \P(D=1)-\P(D=1) \P(D=0)+\P(D=1) \P(D=0)-\frac{n_1}{n} \P(D=0)\right)^2 \Big] \\
\lesssim & \E\Big[{\left(\frac{n}{n_1} \P(D=1) \left(\frac{n_0}{n}-\P(D=0)\right) \right)}^2  \Biggiven X_1,D_1=1 \Big] + \E\Big[{\left(\frac{n}{n_1} \P(D=0) \left(\frac{n_1}{n}-\P(D=1)\right)\right)}^2 \Big] \\
\lesssim & \E\Big[{\left(\frac{n}{n_1} \left(\frac{n_0}{n}-\P(D=0)\right) \right)}^2 \Big] + \E\Big[{\left(\frac{n}{n_1} \left(\frac{n_1}{n}-\P(D=1)\right) \right)}^2  \Big].
\end{align*}

Regarding the two terms above, it is only necessary to focus on the first term; the second term can be derived using the same approach. According to the inverse moments of a binomial random variable \citep[Page 275]{cribari2000note} and Hölder's inequality,
\begin{align*}
& \E\Big[\Big(\frac{n}{n_1} \left(\frac{n_0}{n}-\P(D=0)\right) \Big)^2 \Big]
\leq \Big\{\E\Big[\Big(\frac{n}{n_1}\Big)^4\Big]\Big\}^{1/2} \cdot \Big\{\E\Big[\Big(\frac{n_0}{n}-\P(D=0)\Big)^4\Big]\Big\}^{1/2} \\
\leq & \Big\{\E\Big[n^4\Big(1+\sum_{i=1}^n\ind\left(D_i=1\right)\Big)^{-4}\Big]\Big\}^{1/2} \cdot \Big\{n^{-4} \E\Big[\Big\{\sum_{i=1}^n\Big(\ind\left(D_i=0\right)-\P(D=0)\Big)\Big\}^4\Big]\Big\}^{1/2} \\
\lesssim & \Big\{n^{-4}\E\Big[\Big\{\sum_{i=1}^n\Big(\ind\left(D_i=0\right)-\P(D=0)\Big)\Big\}^4\Big]\Big\}^{1/2}
\lesssim \Big\{n^{-4} n^2\Big\}^{1/2}
\lesssim \frac{1}{n},
\end{align*}
where in the last step we use the fourth central moment of binomial distribution.

Therefore, we conclude that
\begin{equation}\label{eq:rf-consistency6:result}
    \E [S_6^2] = O(n^{-1}).
\end{equation}


{\bf Part 7.}
\begin{align*}
\lvert S_7\rvert = & \frac{1}{n}\sum_{D_i=1} B^{-1} \sum_{b=1}^B \sum_{t\ge1} (\lvert \{k \in \cI^1_b:X_k \in L^1_{bt}\} \rvert)^{-1}\ind(1 \in \cI^1_b:X_i \in L^1_{bt}) \sum_{D_j=0}  \ind\Big(X_j \in L^1_{bt}\Big) \\
& h^{-m/2} K\left(h^{-1/2}\lVert X_j-x\rVert \right) \ind\Big(X_i \in B^c(x, h^{1/2}d_K)\Big) \\
\lesssim & \frac{1}{n}\sum_{D_i=1} B^{-1} \sum_{b=1}^B \sum_{\substack{L^1_{bt} \cap B(x, h^{1/2}d_K) \neq\emptyset, \\
L^1_{bt} \cap B^c(x, h^{1/2}d_K) \neq\emptyset}} (\lvert \{k \in \cI^1_b:X_k \in L^1_{bt}\} \rvert)^{-1}\ind(1 \in \cI^1_b:X_i \in L^1_{bt}) \\
& \sum_{D_j=0} \ind\Big(X_j \in L^1_{bt}\Big) h^{-m/2} \ind\Big(X_j \in B(x, h^{1/2}d_K)\Big) \\
\leq & \frac{1}{n}\sum_{D_i=1} \sum_{D_j=0} w_{j\leftarrow i} h^{-m/2} \ind\Big(X_j \in B^0(x, h^{1/2}d_K)\Big)
= \frac{1}{n}\sum_{D_j=0} h^{-m/2}\ind\Big(X_j \in B^0(x, h^{1/2}d_K)\Big).
\end{align*}

Thus, 
\begin{align*}
\E[ S^2_7 ] = \E\Big[\Big(\frac{1}{n}\sum_{D_j=0} h^{-m/2}\ind\Big(X_j\in B^0(x, h^{1/2}d_K)\Big)\Big)^2\Big]
\lesssim h^{\epsilon}.\yestag\label{eq:rf-consistency7:result}
\end{align*}

Plugging \eqref{eq:rf-consistency1:result}, \eqref{eq:rf-consistency2:result}, \eqref{eq:rf-consistency3:result}, \eqref{eq:rf-consistency4:result}, \eqref{eq:rf-consistency5:result}, \eqref{eq:rf-consistency6:result} and \eqref{eq:rf-consistency7:result} to \eqref{eq:rf-consistency:7part} yields \eqref{asp:dr-propensity-2-sum}.
\end{proof}

\subsection{Proof of Lemma~\ref{lemma:rf-dr-propensity-2-single}}

\begin{proof}[Proof of Lemma~\ref{lemma:rf-dr-propensity-2-single}]
Similar to Lemma~\ref{lemma:rf-dr-propensity-2-sum}, to verify \eqref{asp:dr-propensity-2-single} in Assumption~\ref{asp:dr-propensity}\ref{asp:dr-propensity-2}, we consider the first term above under $D_1 = 1$, and decompose the term as
\begin{align*}
& h^{\frac{d-m}{2}} \sum_{D_j=0} K_{h, j} w_{j \leftarrow 1} - h^{\frac{d-m}{2}}K_{h, 1} \frac{1-e\left(X_1\right)}{e\left(X_1\right)}\\
= & B^{-1} \sum_{b=1}^B \sum_{t\ge1} (\lvert \{k \in \cI^1_b:X_k \in L^1_{bt}\} \rvert)^{-1}\ind(1 \in \cI^1_b:X_1 \in L^1_{bt}) \sum_{D_j=0}  \ind\Big(X_j \in L^1_{bt}\cap B(x, h^{1/2}d_K)\Big)\\
& h^{-m/2} \Big[ K\left(h^{-1/2}\lVert X_j-x\rVert \right) -  K\left(h^{-1/2}\lVert X_1-x\rVert \right) \Big]\ind\Big(X_1 \in B(x, h^{1/2}d_K)\Big) \\
& + h^{-m/2} B^{-1} \sum_{b=1}^B \sum_{t\ge1} (\lvert \{k \in \cI^1_b:X_k \in L^1_{bt}\} \rvert)^{-1}\ind(1 \in \cI^1_b:X_1 \in L^1_{bt}) K\left(h^{-1/2}\lVert X_1-x\rVert \right) \\
& \sum_{D_j=0} \Big[ \ind\Big(X_j \in L^1_{bt}\cap B(x, h^{1/2}d_K)\Big) - \ind\Big(X_j \in L^1_{bt}\Big)\Big]\\
& + h^{-m/2} B^{-1} \sum_{b=1}^B \sum_{t\ge1} (\lvert \{k \in \cI^1_b:X_k \in L^1_{bt}\} \rvert)^{-1}\ind(1 \in \cI^1_b:X_1 \in L^1_{bt}) K\left(h^{-1/2}\lVert X_1-x\rVert \right) \\
& \sum_{D_j=0} \Big[\ind\Big(X_j \in L^1_{bt}\Big) - \zeta_0\Big(L^1_{bt}\cap\cM\Big)\Big]\\
& + n_0 h^{-m/2} B^{-1} \sum_{b=1}^B \sum_{t\ge1} (\lvert \{k \in \cI^1_b:X_k \in L^1_{bt}\} \rvert)^{-1}\ind(1 \in \cI^1_b:X_1 \in L^1_{bt})  K\left(h^{-1/2}\lVert X_1-x\rVert \right) \\
& \Big[\zeta_0\Big(L^1_{bt}\cap\cM\Big) - \frac{g_{0,x}(Z_1)}{g_{1,x}(Z_1)} \zeta_1\Big(L^1_{bt}\cap\cM\Big)\Big]\\
& + \frac{n_0}{n_1} \frac{g_{0,x}(Z_1)}{g_{1,x}(Z_1)} h^{-m/2} K\left(h^{-1/2}\lVert X_1-x\rVert \right) \Big[n_1 B^{-1} \sum_{b=1}^B \sum_{t\ge1} (\lvert \{k \in \cI^1_b:X_k \in L^1_{bt}\} \rvert)^{-1}\ind(1 \in \cI^1_b:X_1 \in L^1_{bt}) \\
& \zeta_1(L^1_{bt}\cap\cM) - 1\Big] \\
& + h^{-m/2} K\left(h^{-1/2}\lVert X_1-x\rVert \right) \left(\frac{n_0}{n_1} \frac{g_{0,x}\left(Z_1\right)}{g_{1,x}\left(Z_1\right)} - \frac{1-e\left(X_1\right)}{e\left(X_1\right)}\right) \\
& + B^{-1} \sum_{b=1}^B \sum_{t\ge1} (\lvert \{k \in \cI^1_b:X_k \in L^1_{bt}\} \rvert)^{-1}\ind(1 \in \cI^1_b:X_1 \in L^1_{bt}) \sum_{D_j=0}  \ind\Big(X_j \in L^1_{bt}\cap B(x, h^{1/2}d_K)\Big)\\
& h^{-m/2} K\left(h^{-1/2}\lVert X_j-x\rVert \right) \ind\Big(X_1 \in B^c(x, h^{1/2}d_K)\Big) \\
=:& T_1 + T_2 + T_3 + T_4 + T_5 + T_6 + T_7.
\end{align*}
Thus by Jensen's Inequality,
\begin{align*}
h^{d-m} \E \Big[ \Big(\sum_{j:D_j=0} K_h\left(X_j-x\right)w_{j\leftarrow 1} - K_h\left(X_1-x\right)\frac{1-e(X_1)}{e(X_1)}\Big)^2 \Biggiven D_1=1 \Big]
\lesssim \sum_{i=1}^6 \E \Big[ {T_i}^2 \Biggiven D_1=1\Big].
\end{align*}


{\bf Part 1.}
For $T_1$, similar to \eqref{eq:rf-consistency1:s1}, define
\begin{align*}
\tilde{T}_1 & :=  B^{-1} \sum_{b=1}^B \sum_{t\ge1} (\lvert \{k \in \cI^1_b:X_k \in L^1_{bt}\} \rvert)^{-1}\ind(1 \in \cI^1_b:X_1 \in L^1_{bt}) \sum_{D_j=0}  \ind\Big(X_j \in L^1_{bt}\cap B(x, h^{1/2}d_K)\Big)\\
& h^{-m/2} \ind\Big(X_1 \in B(x, h^{1/2}d_K)\Big),
\end{align*}
and
\begin{equation*}
\E[T_1^2\given \mD,D_1=1] \lesssim h^{2\epsilon} \E[\{\E[ \tilde{T}_1 \given \mD,D_1=1,\mX_1]\}^2\given\mD,D_1=1] + h^{2\epsilon} \E[\Var[ \tilde{T}_1 \given \mD,\mX_1]\given\mD,D_1=1].
\end{equation*}

For the first term, by Lemma~\ref{lemma:rf-clt},
\begin{align*}
& \E[\{\E[ \tilde{T}_1 \given \mD,D_1=1,\mX_1]\}^2\given\mD,D_1=1]\\
= & \E \Big[h^{-m} \ind\Big(X_1 \in B(x, h^{1/2}d_K)\Big) \Big\{\E \Big[\sum_{t\ge1} (\lvert \{k \in \cI^1:X_k \in L^1_{t}\} \rvert)^{-1}\ind(1 \in \cI^1:X_1 \in L^1_{t}) \\
& n_0 \zeta_0\Big(L^1_{t}\cap B(x, h^{1/2}d_K)\cap\cM\Big)\Biggiven \mD,D_1=1 \Big]\Big\}^2\Big]\\
\leq &  \E\Big[h^{-m}\ind\Big(X_1 \in B(x, h^{1/2}d_K)\Big)\E\Big[\Big\{\frac{s}{\lvert  L^1(X_1)\rvert} \zeta_0\Big(L^1(X_1)\cap \cM\Big)\Big\}^2\Biggiven \mD,D_1=1,\mX_1,1\in\cI^1\Big]\Biggiven \mD,D_1=1\Big] \\
= & h^{-m} \E\Big[\ind\Big(X_1 \in B(x, h^{1/2}d_K)\Big)\Big(\frac{s}{\lvert  L^1(X_1)\rvert} \zeta_0(L^1(X_1))\Big)^2\Biggiven \mD,D_1=1\Big]
\lesssim h^{-m/2}. \yestag\label{eq:rfnew-consistency1:e}
\end{align*}

For the second term,
\begin{align*}
& \E[\Var[ \tilde{T}_1 \given \mD,D_1=1,\mX_1]\given \mD,D_1=1]\\
= & B^{-1}  h^{-m} \E\Big[\ind\Big(X_1 \in B(x, h^{1/2}d_K)\Big) \Var\Big[ \sum_{t\ge1} (\lvert \{k \in \cI^1:X_k \in L^1_{t}\} \rvert)^{-1}\ind(1 \in \cI^1:X_1 \in L^1_{t})\\
& \sum_{D_j=0} \ind\Big(X_j \in L^1_{t}\cap B(x, h^{1/2}d_K)\Big)\Biggiven\mD,D_1=1,\mX_1\Big]\Biggiven \mD,D_1=1\Big]\\
\leq & B^{-1}  h^{-m} \E\Big[\ind\Big(X_1 \in B(x, h^{1/2}d_K)\Big) \E\Big[ \sum_{t\ge1} (\lvert \{k \in \cI^1:X_k \in L^1_{t}\} \rvert)^{-2}\ind(1 \in \cI^1:X_1 \in L^1_{t})\\
& n_0 \sum_{D_j=0} \ind\Big(X_j \in L^1_{t}\cap B(x, h^{1/2}d_K)\Big)\Biggiven \mD,D_1=1,\mX_1\Big]\Biggiven \mD,D_1=1\Big]\\
\lesssim & \frac{n_0}{n_1}  h^{-m} \E\Big[ \sum_{D_i=1} \sum_{t\ge1} (\lvert \{k \in \cI^1:X_k \in L^1_{t}\} \rvert)^{-1}\ind(i \in \cI^1:X_i \in L^1_{t})\zeta_0\Big(L^1_{t}\cap B(x, h^{1/2}d_K)\Big) \Biggiven \mD\Big]\\
= & \frac{n_0}{n_1}  h^{-m} \E\Big[\zeta_0\Big(B(x, h^{1/2}d_K)\Big) \Biggiven \mD\Big]\\
\lesssim& \frac{n_0}{n_1}  h^{-m/2}. \yestag\label{eq:rfnew-consistency1:var}
\end{align*}

Therefore, combining \eqref{eq:rfnew-consistency1:e}-\eqref{eq:rfnew-consistency1:var} with \eqref{eq:n-inverse}, we conclude that
\begin{equation}\label{eq:rfnew-consistency1:result}
    \E[T_1^2\given D_1=1] = O(h^{-m/2+2\epsilon}).
\end{equation}

{\bf Part 2.}
For
\begin{align*}
\lvert T_2\rvert
= &  h^{-m/2} K\left(h^{-1/2}\lVert X_1-x\rVert \right) B^{-1} \sum_{b=1}^B \sum_{t\ge1} (\lvert \{k \in \cI^1_b:X_k \in L^1_{bt}\} \rvert)^{-1}\ind(1 \in \cI^1_b:X_1 \in L^1_{bt}) \\
& \sum_{D_j=0} \ind\Big(X_j \in L^1_{bt}\cap B^c(x, h^{1/2}d_K)\Big),
\end{align*}
we decompose it as
\begin{align*}
\E[ T_2^2\given \mD, D_1=1]
\lesssim \E[\{\E[ \lvert T_2\rvert\given \mD, D_1=1,\mX_1]\}^2 \given \mD, D_1=1] + \E[\Var[ \lvert T_2\rvert\given \mD, D_1=1,\mX_1] \given \mD, D_1=1].
\end{align*}

For the first term, similar to \eqref{eq:rf-consistency2:e} and \eqref{eq:rfnew-consistency1:e}, 
\begin{align*}
& \E[\{\E[ \lvert T_2\rvert\given \mD, D_1=1,\mX_1]\}^2 \given \mD, D_1=1]\\
\lesssim & h^{-m} \E\Big[\ind\Big(X_1 \in B^0(x, h^{1/2}d_K)\Big)\Big(\frac{s}{\lvert  L^1(X_1)\rvert} \zeta_0(L^1(X_1))\Big)^2\Biggiven \mD,D_1=1\Big]\\
\lesssim& h^{-m/2+\epsilon}. \yestag\label{eq:rfnew-consistency2:e}
\end{align*}

For the second term, similar to \eqref{eq:rf-consistency2:var} and \eqref{eq:rfnew-consistency1:var}, we can express it as
\begin{align*}
& \E[\Var[ \lvert T_2\rvert\given \mD, D_1=1,\mX_1] \given \mD, D_1=1] \\
\leq & \frac{1}{B h^{m}} \E\Big[K^2\left(h^{-1/2}\lVert X_1-x\rVert \right)\E\Big[\sum_{t\ge1} (\lvert \{k \in \cI^1:X_k \in L^1_{t}\} \rvert)^{-2}\ind(1 \in \cI^1:X_1 \in L^1_{t}) \\
& n_0 \sum_{D_j=0} \ind\Big(X_j \in L^1_{t}\cap B^c(x, h^{1/2}d_K)\Big) \Biggiven \mD, D_1=1,\mX_1] \Biggiven \mD, D_1=1\Big] \\
\lesssim & n_0 h^{-m} \E\Big[\sum_{L^1_t \cap B(x, h^{1/2}d_K) \neq\emptyset} (\lvert \{k \in \cI^1:X_k \in L^1_{t}\} \rvert)^{-1}\ind(1 \in \cI^1:X_1 \in L^1_{t}) \\
& \zeta_0\Big(L^1_{t}\cap B^c(x, h^{1/2}d_K)\cap\cM\Big) \Biggiven \mD, D_1=1\Big]\\
\lesssim& \frac{n_0}{n_1} h^{-m/2}h^{\epsilon}. \yestag\label{eq:rfnew-consistency2:var}
\end{align*}

Therefore, combining \eqref{eq:rfnew-consistency2:e}-\eqref{eq:rfnew-consistency2:var}, we conclude that
\begin{equation}\label{eq:rfnew-consistency2:result}
    \E[T_2^2\given D_1=1] = O(h^{-m/2+\epsilon}).
\end{equation}

{\bf Part 3.} 
Similar to \eqref{eq:rf-consistency3:result}, 
$\E[T_3\given \mD,\mX_1, \{I_b^1\}_{b=1}^B, \{L_{bt}^1\}_{t\ge 1}] = 0$, and
\begin{align*}
& \E[ T^2_3 \given \mD,D_1=1]
= \E[\Var[ T_3 \given \mD,D_1=1,\mX_1] \given \mD,D_1=1]\\
= & \frac{n_0}{Bh^m} \E\Big[K^2\left(h^{-1/2}\lVert X_1-x\rVert \right) \sum_{t\ge1} (\lvert \{k \in \cI^1:X_k \in L^1_{t}\} \rvert)^{-2}\ind(1 \in \cI^1:X_1 \in L^1_{t})   \zeta_0 \Big(L^1_{t}\cap\cM\Big) \Biggiven \mD,D_1=1\Big]\\
\lesssim & h^{-m} \E\Big[\sum_{L^1_t \cap B(x, h^{1/2}d_K) \neq\emptyset} (\lvert \{k \in \cI^1:X_k \in L^1_{t}\} \rvert)^{-1}\ind(1 \in \cI^1:X_1 \in L^1_{t})\zeta_0(L_t^1)\Biggiven \mD,D_1=1\Big]\\
= & \frac{1}{n_1 h^{m}} \E\Big[\sum_{L^1_t \cap B(x, h^{1/2}d_K) \neq\emptyset} \zeta_0(L_t^1)\Biggiven \mD,D_1=1\Big]\\
\lesssim& \frac{1}{n_1 h^{m/2}}.
\end{align*}

Therefore, we conclude that
\begin{equation}\label{eq:rfnew-consistency3:result}
    \E[T_3^2\given D_1=1]  = O\Big(\frac{1}{n h^{m/2}}\Big).
\end{equation}

{\bf Part 4.}
For $T_4$, according to \eqref{eq:rf-consistency4:leaf}, we have
\begin{align*}
\lvert T_4\rvert \lesssim & n_0 h^{-m/2+1/2+\epsilon} B^{-1} \sum_{b=1}^B \sum_{t\ge1} (\lvert \{k \in \cI^1_b:X_k \in L^1_{bt}\} \rvert)^{-1}\ind(1 \in \cI^1_b:X_1 \in L^1_{bt})  K\left(h^{-1/2}\lVert X_1-x\rVert \right)
\\
& \zeta_1\Big(L^1_{bt}\cap\cM\Big)
=: h^{1/2+\epsilon} \tilde{T}_4,
\end{align*}
and we decompose it as
\begin{align*}
\E[ T_4^2\given \mD, D_1=1]
\lesssim & h^{1+\epsilon} \E[\{\E[ \tilde{T}_4\given \mD, D_1=1,\mX_1]\}^2 \given \mD, D_1=1] + \\
&~~h^{1+\epsilon} \E[\Var[ \tilde{T}_4\given \mD, D_1=1,\mX_1] \given \mD, D_1=1].
\end{align*}

The analysis of the first term can follow the same approach as \eqref{eq:rfnew-consistency2:e} and the second term can follow \eqref{eq:rf-consistency4:var}. And we conclude that
\begin{equation}\label{eq:rfnew-consistency4:result}
    \E[T_4^2\given D_1=1] = O(h^{-m/2+2\epsilon}).
\end{equation}

{\bf Part 5.}
Define
\begin{align*}
\tilde{T}_5 & := \frac{n_0}{n_1} h^{-m/2} K\left(h^{-1/2}\lVert X_1-x\rVert \right) \Big[n_1 B^{-1} \sum_{b=1}^B \sum_{t\ge1} (\lvert \{k \in \cI^1_b:X_k \in L^1_{bt}\} \rvert)^{-1}\ind(1 \in \cI^1_b:X_1 \in L^1_{bt}) \\
& \zeta_1(L^1_{bt}\cap\cM) - 1\Big],
\end{align*}
and similar to \eqref{eq:rf-consistency5:result}, we can break this term down as follows:
\begin{align*}
\E[ T_5^2\given \mD, D_1=1]
\lesssim \E[\{\E[ \tilde{T}_5\given \mD, D_1=1,\mX_1]\}^2 \given \mD, D_1=1] + \E[\Var[ \tilde{T}_5\given \mD, D_1=1,\mX_1] \given \mD, D_1=1].
\end{align*}

For the first term, similar to \eqref{eq:rf-consistency5:e},
\begin{align*}
& \E[\{\E[ \tilde{T}_5\given \mD, D_1=1,\mX_1]\}^2 \\ 
\lesssim & \Big(\frac{n_0}{n_1}\Big)^2\E\Big[h^{-m} K^2\left(h^{-1/2}\lVert X_1-x\rVert \right) \Big(\frac{s}{\lvert L^1(X_i)\rvert}\zeta_1\Big(L^1(X_1)\cap\cM\Big)-1\Big)^2\Biggiven \mD\Big]\\
\lesssim & \frac{n_0^2}{n_1^2} h^{-m/2+\epsilon}. \yestag\label{eq:rfnew-consistency5:e}
\end{align*}

And the second term can be obtained in the same way as \eqref{eq:rf-consistency5:var}.

Therefore, we conclude that
\begin{equation}\label{eq:rfnew-consistency5:result}
    \E[T_5^2\given D_1=1] = O(h^{-m/2+\epsilon}).
\end{equation}

{\bf Part 6.}
The same as the \eqref{eq:rf-consistency6:result}, we can deduce that
\begin{align*}
\E [{S_6}^2 \given D_1=1]
\lesssim h^{-m/2}\E\Big[\Big(\frac{n_0}{n_1}-\frac{\P(D=0)}{\P(D=1)}\Big)^2\Biggiven D_1=1\Big]
\lesssim \frac{1}{nh^{m/2}}.\yestag\label{eq:rfnew-consistency6:result}
\end{align*}

{\bf Part 7.}
Define
\begin{align*}
\tilde{T}_7 & := h^{-m/2} \ind\Big(X_1 \in B^1(x, h^{1/2}d_K)\Big) B^{-1} \sum_{b=1}^B \sum_{t\ge1} (\lvert \{k \in \cI^1_b:X_k \in L^1_{bt}\} \rvert)^{-1}\ind(1 \in \cI^1_b:X_1 \in L^1_{bt}) \\
& \sum_{D_j=0}  \ind(X_j \in L^1_{bt}),
\end{align*}
then
\begin{align*}
    \E[T_7^2\given \mD,D_1=1,\mX_1] \lesssim \{\E[\tilde{T}_7\given \mD,D_1=1,\mX_1]\}^2+ \Var[\tilde{T}_7\given \mD,D_1=1,\mX_1].
\end{align*}

For the bias term,
\begin{align*}
& \E[\tilde{T}_7\given \mD,D_1=1,\mX_1]\\
= &  \frac{n_0}{n_1} h^{-m/2} \ind\Big(X_1 \in B^1(x, h^{1/2}d_K)\Big)\E\Big[\frac{s}{\lvert L^1(X_1)}\zeta_0(L^1(X_1))\Biggiven \mD,D_1=1,\mX_1,1\in\cI^1\Big].
\end{align*}

For the variance term, 
\begin{align*}
& \Var[\tilde{T}_7\given \mD,D_1=1,\mX_1]\\
= & \frac{1}{Bh^m}\ind\Big(X_1 \in B^1(x, h^{1/2}d_K)\Big) \Var\Big[\sum_{t\ge1} (\lvert \{k \in \cI^1:X_k \in L^1_{t}\} \rvert)^{-1}\ind(1 \in \cI^1:X_1 \in L^1_{t}) \\
& \sum_{D_j=0}  \ind(X_j \in L^1_{bt})\Biggiven \mD,D_1=1,\mX_1\Big]\\
\leq & \frac{1}{Bh^m}\ind\Big(X_1 \in B^1(x, h^{1/2}d_K)\Big) \E\Big[\sum_{t\ge1} (\lvert \{k \in \cI^1:X_k \in L^1_{t}\} \rvert)^{-2}\ind(1 \in \cI^1:X_1 \in L^1_{t}) \\
&n_0 \sum_{D_j=0}  \ind(X_j \in L^1_{t})\Biggiven \mD,D_1=1,\mX_1\Big]\\
\lesssim & h^{-m} \ind\Big(X_1 \in B^1(x, h^{1/2}d_K)\Big) \E\Big[n_0 \sum_{t\ge1} (\lvert \{k \in \cI^1:X_k \in L^1_{t}\} \rvert)^{-1}\ind(1 \in \cI^1:X_1 \in L^1_{t}) \\
& \zeta_0 (L^1_{t}\cap\cM)\Biggiven \mD,D_1=1,\mX_1\Big]\\
= &  \frac{n_0}{n_1} h^{-m} \ind\Big(X_1 \in B^1(x, h^{1/2}d_K)\Big)\E\Big[\frac{s}{\lvert L^1(X_1)}\zeta_0(L^1(X_1))\Biggiven \mD,D_1=1,\mX_1,1\in\cI^1\Big].
\end{align*}

Thus by Lemma~\ref{lemma:rf-clt},
\begin{align*}
& \E[\tilde{T}_7\given \mD,D_1=1]
= \E[\E[\tilde{T}_7\given \mD,D_1=1,\mX_1]\given \mD,D_1=1]\\
\lesssim & \Big(\frac{n_0^2}{n_1^2}+\frac{n_0}{n_1}\Big)\E\Big[h^{-m} \ind\Big(X_1 \in B^1(x, h^{1/2}d_K)\Big)\Biggiven \mD,D_1=1\Big]
\lesssim \Big(\frac{n_0^2}{n_1^2}+\frac{n_0}{n_1}\Big)h^{-m/2+\epsilon}.
\end{align*}
Therefore,
\begin{equation}\label{eq:rfnew-consistency7:result}
    \E[ S^2_7 ] = O(h^{-m/2+\epsilon}).
\end{equation}

Combining \eqref{eq:rfnew-consistency1:result}, \eqref{eq:rfnew-consistency2:result}, \eqref{eq:rfnew-consistency3:result}, \eqref{eq:rfnew-consistency4:result}, \eqref{eq:rfnew-consistency5:result}, \eqref{eq:rfnew-consistency6:result}, and 
\eqref{eq:rfnew-consistency7:result} yields \eqref{asp:dr-propensity-2-single}.
\end{proof}

\subsection{Proof of Lemma~\ref{lemma:rf-mse-discrepancy}}
\begin{proof}[Proof of Lemma~\ref{lemma:rf-mse-discrepancy}]
By \eqref{eq:rf-w} and Assumption~\ref{asp:rf1}\ref{asp:rf1-2}, we have
\begin{align*}
& \E \Big[\Big(\frac{1}{n} \sum_{i=1}^n h^{-m/2} K\left(\lVert h^{-1/2}\left(X_i-x\right)\rVert\right) \sum_{j:D_j=1-D_i} \lvert w_{i\leftarrow j} \rvert \lVert X_i-X_j \rVert\Big)^{2\gamma}\Big] \\
\lesssim & \E \Big[\Big(\frac{1}{n} \sum_{i=1}^n h^{-m/2} K\left(\lVert h^{-1/2}\left(X_i-x\right)\rVert\right) \sum_{j:D_j=1-D_i} w_{i\leftarrow j} \Big\lVert {\rm diam}(L_t \cap \cM) \Big\rVert_\infty\Big)^{2\gamma}\Big] \\
\lesssim & h^{(1/2+\epsilon)2\gamma} \E \Big[\Big(\frac{1}{n} \sum_{i=1}^n h^{-m/2} K\left(\lVert h^{-1/2}\left(X_i-x\right)\rVert\right) \sum_{j:D_j=1-D_i}  w_{i\leftarrow j} \Big)^{2\gamma}\Big]\\
= & h^{(1+2\epsilon)\gamma} \E \Big[\Big(\frac{1}{n} \sum_{i=1}^n h^{-m/2} K\left(\lVert h^{-1/2}\left(X_i-x\right)\rVert\right) \Big)^{2\gamma}\Big]
\lesssim h^{(1+2\epsilon)\gamma}.
\end{align*}
This completes the proof.
\end{proof}

{
\bibliographystyle{apalike}
\bibliography{manifold}
}

\end{document}